\begin{document}
\def\supp{\operatorname{supp}}
\def\tr{\operatorname{tr}}
\def\lt{{\operatorname{{lt}}}}
\def\id{\operatorname{{id}}}
\def\interior{\operatorname{Int}} 
\def\Ind{\operatorname{Ind}} 
\def\Res{\operatorname{Res}}
\def\Prim{\operatorname{Prim}}
\def\Ideal{\operatorname{Id}}
\def\Ideals{\operatorname{Id}}
\def\Aut{\operatorname{Aut}}
\newcommand{\Chi}{\raisebox{2pt}{\ensuremath{\chi}}}
\def\H{\mathcal{H}} 
\def\K{\mathcal{K}} 
\def\N{\mathcal{N}} 
\def\C{\mathbb{C}}
\def\T{\mathbb{T}}
\def\Z{\mathbb{Z}}
\def\R{\mathbb{R}}
\def\P{\mathbb{P}}
\def\NN{\mathbb{N}}
\newtheorem{thm}{Theorem}[section]
\newtheorem{cor}[thm]{Corollary}
\newtheorem{prop}[thm]{Proposition}
\newtheorem{lemma}[thm]{Lemma}
\theoremstyle{definition}
\newtheorem{defn}[thm]{Definition}
\newtheorem{remark}[thm]{Remark}
\newtheorem{example}[thm]{Example}
\newtheorem{examples}[thm]{Examples}
\numberwithin{equation}{section}
\title
[\boldmath Strength of convergence in non-free transformation groups]
{\boldmath Strength of convergence in non-free transformation groups}

\author[Archbold]{Robert Archbold}
\address{Institute of Mathematics
\\University of Aberdeen
\\Aberdeen AB24 3UE
\\Scotland
\\United Kingdom
}
\email{r.archbold@abdn.ac.uk}

\author[A. an Huef]{Astrid an Huef}
\address{Department of Mathematics and Statistics\\
University of Otago\\
Dunedin 9054\\
New Zealand}
\email{astrid@maths.otago.ac.nz}

\keywords{Transformation group, stability subgroup, proper action,
$k$-times convergence, measure accumulation, crossed-product
$C^*$-algebra, induced representation, spectrum of a $C^*$-algebra,
multiplicity of a representation} \subjclass[2000]{46L05}
\date{18 November 2011}

\begin{abstract} Let $(G, X)$ be a transformation group where the group $G$ does not necessarily act freely on the space $X$. We investigate the extent to which the action of $G$ may fail to be proper. Stability subgroups are used to define new notions of strength of convergence in the orbit space and of measure accumulation along orbits. By using the representation theory of the associated crossed product $C^*$-algebra, we show that these notions are equivalent under certain conditions.
\end{abstract}

\thanks{This research was supported by  grants from  the University of Otago, and by grant number 41019 from the London Mathematical Society.}

\maketitle

\section{Introduction}
Let $(G, X)$ be a second-countable, locally compact, Hausdorff
transformation group, so that the group $G$ acts continuously on the
space $X$. Thinking of the group action as time evolution on a state
space, an action is proper if states move far away from their
original position over long periods of time. We showed in \cite{AaH}
that the failure of properness in a free action of $G$ on $X$ can be
counted  by two methods, one topological and the other
measure-theoretic, and that they give the same answer. The
topological method is based on the notion of strength of convergence
from \cite[Definition~2.2]{AD} and is motivated by the behaviour of
sequences in the dual of a nilpotent Lie group \cite{Lud} and by
several previous considerations of the failure of properness
\cite{green1, aH, MW, W}. For example, a sequence $(x_n)_n$ in $X$
\emph{converges $2$-times in the orbit space to $z$ in $X$} if there
are two sequences $(t_n^{(1)})_n$ and $(t_n^{(2)})_n$ in $G$ such
that $(t_n^{(1)}\cdot x_n)_n$ and $(t_n^{(2)}\cdot x_n)_n$ both
converge to $x$, and $t_n^{(2)}(t_n^{(1)})^{-1}\to \infty$. The
measure-theoretic method involves the accumulation of Haar measure
as in \cite{aH}.  These methods are linked via the representation
theory of the crossed product $C^*$-algebra $C_0(X)\rtimes G$, and
in particular via the lower relative multiplicity number
$M_L(\pi,(\pi_n))$ associated to a particular convergent sequence
$\pi_n\to\pi$ in the spectrum of $C_0(X)\rtimes G$.

There are two natural ways to generalise these results. First, we can replace $X$ by a `non-commutative space', that is, replace $C_0(X)$ by a non-commutative $C^*$-algebra $A$ on which $G$ acts by automorphisms; we investigated this in \cite{AaH2} under the assumption that the induced action of $G$ on the primitive ideal space of $A$ is free. Second, we can retain the transformation group $(G, X)$ and relax the  assumption that the action of $G$ on $X$ should be free. In this paper we focus on non-free actions of $G$ on $X$.

When the  action is not free, the technical difficulties  of working with induced representations of the crossed product $C_0(X)\rtimes G$ increase substantially \cite{E94, E-memoir, ER,  green2, RW,  W2, W}. For this reason, and because we want to make use of the dual action on $C_0(X)\rtimes G$, we assume that the group $G$ is abelian in \S\S\ref{sec-lowerbounds}--\ref{main}.

In \S\ref{sec-prelim} we discuss our set-up which involves careful
choices of measures on the subgroups and quotients of $G$. In
\S\ref{sec-ktimesintro} we define $k$-times convergence in the
presence of stability  subgroups and show that measure accumulation
gives rise to sequences which converge $k$-times. In
\S\ref{sec-lowerbounds} we introduce induced representations of the
crossed product $C_0(X)\rtimes G$. We consider a sequence of induced
representations $(\pi_n)_n$ converging to an induced representation
$\pi$, and   establish sufficient conditions involving $k$-times
convergence which ensure that $M_L(\pi,(\pi_n))$ is bounded below.
In \S\ref{measure} we establish upper bounds on  $M_L(\pi,(\pi_n))$
arising from bounds on measure accumulation. In \S\ref{main}  we
combine our results to obtain our main theorem
(Theorem~\ref{thm-main}) which shows that, under certain conditions,
strength of convergence and measure accumulation in the
transformation group $(G,X)$ are equivalent, being linked by the
representation theory of $C_0(X)\rtimes G$.

\subsection*{Preliminaries} Let $A$ be a $C^*$-algebra, $\hat A$ its spectrum
and $\pi\in\hat A$ an irreducible representation. Upper and lower multiplicities
$M_U(\pi)$ and $M_L(\pi)$, and upper and lower multiplicities $M_U(\pi,(\pi_n))$
and $M_L(\pi,(\pi_n))$ relative to a net $(\pi_n)$ in $\hat A$ were first defined
in \cite{A} and \cite{AS}, respectively. We refer the reader to \cite[\S2]{AaH} for a convenient summary of what is needed here. We set $\P=\mathbb N\setminus\{0\}$.

 \section{The set-up: Choices of measures on the subgroups of $G$}\label{sec-prelim}
Let $G$ be a locally compact group with left Haar measure $\mu$.
Let $\Sigma$ be the family of all closed subgroups of $G$.
We endow $\Sigma$ with the \emph{Fell topology} from \cite{fell-topology}.
A basis for this topology is the family of  sets
\[
U(C,F)=\lbrace H\in\Sigma: C\cap H=\emptyset \text{\ and\ }
H\cap A\neq\emptyset \text{\ for\ each\ }A\in F\rbrace,
\]
where  $C$ is a  compact subset of $G$ and $F$ a finite family of non-empty open subsets of $G$.
Then $\Sigma$ is a compact Hausdorff space by  \cite[Theorem 1 and Remark IV]{fell-topology}.
We will frequently use that $H_{\lambda}\to H$ in $\Sigma$ if and only if
\begin{enumerate}
\item if $h\in H$ then there exist a subnet  $(H_{{\lambda(\mu)}})$ and  $h_{\mu}\in H_{{\lambda(\mu)}}$ such that $h_{\mu}\to h$, and
\item if  $h_\lambda\in H_{\lambda}$ and $h_\lambda\to h$ then $h\in H$
\end{enumerate}
(see, for example, \cite[Lemma~H2]{tfb^2}).

Fix a  function $f_0\in C_c(G)$ with $f_0(e)=1$ and $0\leq f\leq 1$.
For each $H\in\Sigma$ we choose the left Haar measure $\alpha_H$ on $H$ satisfying
\[\int_H f_0(t) d\alpha_H(t)=1.\]
Such a choice of measures is called a \emph{continuous choice of Haar measures} on the closed
subgroups of $G,$ and has the property that
\[H\mapsto \int_H f(t) \ d\alpha_H(t)\]
is a continuous function on $\Sigma$ for any $f\in C_c(G)$  (see \cite[p. 908]{glimm-families} or \cite[Lemma~H.8]{tfb^2}). We write $\Delta_H$ for the
modular function associated with the measure $\alpha_H$.

Now let $(G,X)$ be a transformation group. The \emph{stability subgroup} at $x\in X$ is $S_x:=\{s\in G:s\cdot x=x\}$.
We write $\alpha_x$ for $\alpha_{S_x}$, $q_x:G\to G/S_x$ for the quotient map, and $\dot s=sS_x=q_x(s)$ for $s\in G$. We also define $\phi_x:G\to X$ by $\phi_x(s)=s\cdot x$.

If $H$ is a normal subgroup of $G$ then there exists a unique right-invariant Haar measure $\nu_H$
on $G/H$ such that for all
$f\in C_c(G)$,
\begin{equation}\label{quotientmeasure1}\int_G f(s)\ d\mu(s)=\int_{G/H}\int_{H} f(st)
\ d\alpha_H(t) \ d\nu_H(\dot s)
\end{equation}
(see, for example, \cite[Appendix~C]{tfb}).  If $H=S_x$ then we write $\nu_x$ for $\nu_{S_x}$.
We claim that it follows that if $\chi_E$ is the characteristic function of a measurable subset $E$ of $G$, then
\begin{equation}\label{quotientmeasure2}\int_G \chi_E(s)f(s) \ d\mu(s)=\int_{G/H}\int_{H} \chi_E(st)f(st)
\ d\alpha_H(t) \ d\nu_H(\dot s).
\end{equation}
Indeed, since $\supp f$ is compact, we may assume that $E$ has finite measure so that by Lusin's Theorem $\chi_E$ is the pointwise limit (almost everywhere) of a sequence of continuous functions $g_n: G \to [0,1]$. Then the claim follows by repeated applications of the Dominated Convergence Theorem.

Typically we will be interested in sequences $(x_n)_n$ in $X$ such that $S_{x_n}\to S_z$ for some $z\in X$ as $n\to \infty$, and that $S_{x_n}$ and $S_z$ are normal in $G$; we then assume that the quotient measures $\nu_{x_n}$ and $\nu_z$ have been chosen to satisfy \eqref{quotientmeasure1}.  When $x_n\to z$ in $X$, the assumption that $S_{x_n}\to S_z$ is, of course, weaker than the assumption that the stability subgroups vary continuously over the whole space $X$.

\section{$k$-times convergence with stability}\label{sec-ktimesintro}

The following definition of $k$-times convergence immediately reduces to the one used in \cite{AD, AaH} when the action of $G$ on $X$ is free.

\begin{defn}\label{defn_k-times} Let $(G,X)$ be a transformation group.
A sequence $(x_n)_{n\geq 1}$ in $X$ is
said to \emph{converge $k$-times in $X/G$ to $z\in X$} if there
exist $k$ sequences $(t_n^{(1)})_n,(t_n^{(2)})_n,\cdots
,(t_n^{(k)})_n$ in $G$ such that
\begin{enumerate}
\item $t_n^{(i)}\cdot x_n\to z$ as $n\to\infty$ for $1\leq i\leq k$, and
\item if $1\leq i<j\leq k$ then $t_n^{(j)}(t_n^{(i)})^{-1}S_{x_n}\to\infty$ as
$n\to\infty$ (that is, for every compact subset $K$ of $G$,  $t^{(j)}_n(t^{(i)}_n)^{-1}S_{x_n}$ is eventually disjoint from $K$).
\end{enumerate}
\end{defn}

As in the free case,  $k$-times converging sequences in $X/G$ arise
from measure accumulation. We show this in
Proposition~\ref{prop-tsoc2} below, and the next three lemmas lead
towards this. In order to describe measure accumulation for a point
$z\in X$, we require that $z$ has a base of neighbourhoods $V$ such
that $q_z(\phi_z^{-1}(V))$ has finite Haar measure in $G/S_z$, and by Lemma~\ref{lem-remark}
this is equivalent to the orbit $G\cdot z$
being locally closed in $X$.  Lemmas~\ref{lem-pointwise} and \ref{lem-supmeasures2} are technical ones addressing the following issue: if $W$ is a compact neighbourhood of $G$ and $S_{x_n}\to S_z$ in $\Sigma$, then we want to compare the measures $\nu_{x_n}(q_{x_n}(WS_{x_n}))$ and $\nu_{z}(q_{z}(WS_{z}))$ for large $n$. But even though $WS_{x_n}\to WS_z$  in the Fell topology on the closed subsets of $X$,  it is conceivable that  $\chi_{WS_{x_n}}$ may not converge pointwise almost everywhere to $\chi_{WS_z}$.

\begin{lemma}\label{lem-remark}
Let $(G,X)$ be a second-countable transformation group. Let $z\in X$
and suppose that the stability subgroup $S_z$ is normal in $G$. Then the following are
equivalent:
\begin{enumerate}
\item\label{lem-remark-1} the orbit $G\cdot z$ is not locally closed in $X$;
\item\label{lem-remark-2} for every $k\in\P$, the sequence $z,z,z,\dots$ converges
$k$-times in $X/G$ to $z$;
\item\label{lem-remark-3} for every open neighbourhood $V$ of $z$,
$\nu_z(q_z(\phi_z^{-1}(V)))=\infty$;
\item\label{lem-remark-4} for every open neighbourhood $V$ of $z$,
$q_z(\phi_z^{-1}(V))$ is not relatively compact in $G/S_z$.
\end{enumerate}
\end{lemma}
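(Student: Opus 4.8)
The plan is to prove the cycle of implications $\neg\eqref{lem-remark-4}\Rightarrow\neg\eqref{lem-remark-1}$, $\eqref{lem-remark-1}\Rightarrow\eqref{lem-remark-2}$, $\eqref{lem-remark-2}\Rightarrow\eqref{lem-remark-3}$, and $\eqref{lem-remark-3}\Rightarrow\eqref{lem-remark-4}$. First I would clarify the roles of the two maps: $\phi_z\colon G\to X$ factors as $\phi_z=\tilde\phi_z\circ q_z$ where $\tilde\phi_z\colon G/S_z\to X$ is the canonical continuous injection onto the orbit $G\cdot z$ with its quotient topology, so that $q_z(\phi_z^{-1}(V))=\tilde\phi_z^{-1}(V\cap G\cdot z)$. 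Since $G$ is second-countable and locally compact, and $S_z$ is closed (here even normal), the standard orbit-space theory (e.g.\ Effros, or \cite[Appendix]{W2}) gives: $G\cdot z$ is locally closed in $X$ if and only if $\tilde\phi_z$ is a homeomorphism of $G/S_z$ onto $G\cdot z$ (with the relative topology).

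For $\neg\eqref{lem-remark-4}\Rightarrow\neg\eqref{lem-remark-1}$: if every open $V\ni z$ has $q_z(\phi_z^{-1}(V))$ relatively compact in $G/S_z$, then the continuous bijection $\tilde\phi_z$ is proper onto $G\cdot z$, hence a homeomorphism onto a locally closed subset, so $G\cdot z$ is locally closed. For $\eqref{lem-remark-3}\Rightarrow\eqref{lem-remark-4}$: a relatively compact subset of $G/S_z$ has finite (right Haar) measure $\nu_z$, so if some $V$ gave a relatively compact $q_z(\phi_z^{-1}(V))$ it would have finite measure, contradicting \eqref{lem-remark-3}. For $\eqref{lem-remark-2}\Rightarrow\eqref{lem-remark-3}$: fix $V\ni z$ open and $k\in\P$; $k$-times convergence of the constant sequence $z,z,\dots$ supplies $t^{(1)},\dots,t^{(k)}\in G$ with $t^{(i)}\cdot z\in V$ for all $i$ (so $t^{(i)}\in\phi_z^{-1}(V)$) and $t^{(j)}(t^{(i)})^{-1}\notin$ any prescribed compact set modulo $S_z$; choosing a symmetric compact neighbourhood $K$ of $e$ one arranges that the sets $t^{(i)}KS_z/S_z$ are pairwise disjoint in $G/S_z$, and by shrinking $V$ first so that $t^{(i)}K\cdot z\subseteq V$ is not automatic — instead one runs the argument the other way: use the $t^{(i)}$ to produce $k$ disjoint translates $\dot t^{(i)}\,q_z(\phi_z^{-1}(V_0))$ inside $q_z(\phi_z^{-1}(V))$ for a small fixed $V_0$, giving $\nu_z(q_z(\phi_z^{-1}(V)))\ge k\,\nu_z(q_z(\phi_z^{-1}(V_0)))$; letting $k\to\infty$ yields infinite measure, once one checks $\nu_z(q_z(\phi_z^{-1}(V_0)))>0$, which holds because $q_z(\phi_z^{-1}(V_0))$ is a non-empty open subset of $G/S_z$ (it contains $\dot e$) and $\nu_z$ is a Haar measure. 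For $\eqref{lem-remark-1}\Rightarrow\eqref{lem-remark-2}$: if $G\cdot z$ is not locally closed then $\tilde\phi_z$ is not a homeomorphism onto its image, so it is not proper, and there is an open $V_1\ni z$ with $q_z(\phi_z^{-1}(V_1))$ not relatively compact; iterating, one extracts for each $k$ a sequence of group elements $\dot t^{(1)},\dots,\dot t^{(k)}$ in $q_z(\phi_z^{-1}(V))$, for a given $V\ni z$, that escape every compact set pairwise modulo $S_z$ — this is where one uses that a non-locally-closed orbit fails to be relatively compact in every neighbourhood, together with a diagonal/extraction argument over a countable neighbourhood base of $z$ and a countable compact exhaustion of $G/S_z$.

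The main obstacle I anticipate is the implication $\eqref{lem-remark-1}\Rightarrow\eqref{lem-remark-2}$: turning the qualitative failure of local closedness into $k$ explicit sequences $(t^{(i)}_n)_n$ satisfying the escape-to-infinity condition $t^{(j)}_n(t^{(i)}_n)^{-1}S_{x_n}\to\infty$ (here $S_{x_n}=S_z$ throughout since the sequence is constant). The subtlety is that local non-closedness gives non-relative-compactness of $q_z(\phi_z^{-1}(V))$ for every $V$, but one must extract elements that are mutually far apart modulo $S_z$, not merely unbounded; this requires choosing them inductively so that each new element lies outside the union of translates $\dot t^{(i)} K$ of a growing compact set $K$, using that $q_z(\phi_z^{-1}(V))$ is not covered by finitely many such translates precisely because it is not relatively compact. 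A diagonal argument over a countable base $(V_m)$ at $z$ and a countable exhaustion $(K_m)$ of $G/S_z$ then produces the required sequences; second countability of $(G,X)$ is exactly what makes these countable choices available.
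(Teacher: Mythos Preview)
Your chain of implications does not close. You prove $\neg\eqref{lem-remark-4}\Rightarrow\neg\eqref{lem-remark-1}$, which is the contrapositive of $\eqref{lem-remark-1}\Rightarrow\eqref{lem-remark-4}$, and then $\eqref{lem-remark-1}\Rightarrow\eqref{lem-remark-2}\Rightarrow\eqref{lem-remark-3}\Rightarrow\eqref{lem-remark-4}$. All four arrows point away from \eqref{lem-remark-1}; nothing returns to it, so you cannot conclude that \eqref{lem-remark-2}, \eqref{lem-remark-3} or \eqref{lem-remark-4} implies \eqref{lem-remark-1}. (Your phrasing of $\neg\eqref{lem-remark-4}$ as ``every open $V$ has $q_z(\phi_z^{-1}(V))$ relatively compact'' is also off --- the negation is ``there exists one such $V$'' --- but your argument still yields $\eqref{lem-remark-1}\Rightarrow\eqref{lem-remark-4}$ once this is corrected.) The paper closes the loop the other way: it proves $\neg\eqref{lem-remark-1}\Rightarrow\neg\eqref{lem-remark-4}$, i.e.\ $\eqref{lem-remark-4}\Rightarrow\eqref{lem-remark-1}$, by observing that a locally closed orbit is locally compact, invoking Glimm's theorem to get that $\tilde\phi_z$ is a homeomorphism, and then pulling back a compact neighbourhood of $z$ taken inside an open set $U$ with $U\cap\overline{G\cdot z}=G\cdot z$. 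You need this direction (or some other route back to \eqref{lem-remark-1}).

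There is also a problem in your $\eqref{lem-remark-2}\Rightarrow\eqref{lem-remark-3}$ step. You correctly identify that $t^{(i)}K\cdot z\subseteq V$ is not automatic, but your proposed fix --- placing disjoint translates $\dot t^{(i)}\,q_z(\phi_z^{-1}(V_0))$ inside $q_z(\phi_z^{-1}(V))$ --- fails for the same reason: the set $q_z(\phi_z^{-1}(V_0))$ need not be small (indeed, under \eqref{lem-remark-1} it is never relatively compact), so neither the containment in $q_z(\phi_z^{-1}(V))$ nor the pairwise disjointness can be arranged. The paper's manoeuvre is the one you abandoned: choose a compact neighbourhood $K$ of $e$ in $G$ and an open $U\ni z$ with $K\cdot U\subset V$, then pick $n_0$ large enough that $t^{(i)}_{n_0}\cdot z\in U$ and $t^{(j)}_{n_0}(t^{(i)}_{n_0})^{-1}S_z\cap K^{-1}K=\emptyset$; this gives $q_z(Kt^{(i)}_{n_0})\subset q_z(\phi_z^{-1}(V))$ pairwise disjoint, and right-invariance of $\nu_z$ finishes it.
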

\begin{proof}
Let $(V_n)_{n\geq 1}$ be a decreasing sequence of basic open
neighbourhoods of $z$ in $X$ and let $(K_n)_{n\geq 1}$ be an
increasing sequence of compact subsets of $G$ such that
$G=\cup_{n\geq 1}\interior(K_n)$.

\eqref{lem-remark-1} $\Longrightarrow$ \eqref{lem-remark-2}. Suppose that $G\cdot z$ is not locally
closed. Then $W\cap (\overline{G\cdot z}\setminus G\cdot
z)\neq\emptyset$ for every neighbourhood $W$ of $z$. Let $k\geq1$.
We will construct the required $k$ sequences $(t_n^{(i)})_{n\geq 1}$ in $G$ by induction.

Let $n\in\P$. We construct $t_n^{(i)}$ as follows.  Let
$t_n^{(1)}=e$. Since $G\cdot z$ is not locally closed there exists
$y\in V_n\cap (\overline{G\cdot z}\setminus G\cdot z)$. Since $y$ is
in the closure of $G\cdot z$ and $V_n$ is open, a straightforward compactness argument shows that given any compact
subset $K$ of $G$ there exists $t_K\in G\setminus KS_z$ such that
$t_K\cdot z\in V_n$.  So there exists $t_n^{(2)}\in G\setminus
K_nS_z=G\setminus K_nt_n^{(1)}S_z$ such that $t_n^{(2)}\cdot z\in V_n$. Proceeding inductively
we obtain $t_n^{(2)},t_n^{(3)} \dots, t_n^{(k)}$ such that
\[t_n^{(j)}\cdot z\in V_n\quad\text{and}\quad t_n^{(j)}\in G\setminus  \left(\cup_{i=1}^{j-1} K_nt_n^{(i)}S_z\right)
\]
for $2\leq j\leq k$.

Since $(V_n)_{n\geq 1}$ is  a decreasing sequence of basic open
neighbourhoods of $z$, $t_n^{(j)}\cdot z\to z$  as $n\to\infty$ for $1\leq j\leq k$.
 By way of contradiction, suppose that for some $i<j$ there exists a compact subset $K$ of $G$ such that $t_n^{(j)}(t_n^{(i)})^{-1}S_z$ meets $K$ frequently. Since $(K_n)_{n\geq 1}$ is an
increasing sequence of compact subsets of $G$ such that
$G=\cup_{n\geq 1}\interior(K_n)$ there exists $N$ such that $K\subset K_N$.  Then there exsist $n_0\geq N$  such that  $t_{n_0}^{(j)}(t_{n_0}^{(i)})^{-1}S_z$ meets $K_{n_0}$.  But  this implies that  $t_{n_0}^{(j)}\in K_{n_0}t_{n_0}^{(i)}S_z$ because $S_z$ is normal, contradicting the construction of the sequence $(t_n^{(j)})$. Thus  $z,z,z,\dots$ converges $k$-times in $X/G$ to $z$.

\eqref{lem-remark-2} $\Longrightarrow$ \eqref{lem-remark-3}. Suppose that \eqref{lem-remark-2}  holds. Let $V$ be an
open neighbourhood of $z$ and $M>0$.  By the continuity of the
action on the locally compact Hausdorff space $X$, there exists an
open neighbourhood $U$ of $z$ and a compact neighbourhood $K$ of $e$
in $G$ such that $K\cdot U\subset V$. Then $q_z(K)$
is a compact neighbourhood of the identity in $G/S_z$ and we may
choose $k\in\P$ such that $k\nu_z(q_z(K))>M$. By \eqref{lem-remark-2} there exist $k$
sequences $(t_n^{(i)})_{n\geq 1}$
 such that $t_n^{(i)}\cdot z\to z$ as $n\to\infty$ for each $1\leq i\leq k$,
 and
\[ t_n^{(j)}(t_n^{(i)})^{-1}S_z\to\infty\text{\ as\ }
n\to\infty\quad (1\leq i<j\leq k).
\]
Hence there exists $n_0$ such that  $t_{n_0}^{(i)}\cdot z\in U$ for
$1\leq i\leq k$ and $t_{n_0}^{(j)}(t_{n_0}^{(i)})^{-1}S_z\subset
G\setminus(K^{-1}K)$ for $1\leq i<j\leq k$. Then
$Kt_{n_0}^{(i)}\cdot z\subset K\cdot U\subset V$  and hence $q_z(Kt_{n_0}^{(i)})\subset q_z(\phi_z^{-1}(V))$ for $1\leq i\leq k$. Also
$q_z(Kt_{n_0}^{(i)})\cap q_z( Kt_{n_0}^{(j)})=\emptyset$ because $Kt_{n_0}^{(i)}S_z\cap Kt_{n_0}^{(j)}S_z=\emptyset$ unless
$i=j$. Since the measure $\nu_z$ is right-invariant, $\nu_z(q_z(\phi_z^{-1}(V)))\geq k\nu_z(q_z(K))>M$.
Since $M$ was arbitrary, \eqref{lem-remark-3} follows.

\eqref{lem-remark-3} $\Longrightarrow$ \eqref{lem-remark-4}. Compact subsets have finite Haar measure,
so this is immediate.

\eqref{lem-remark-4} $\Longrightarrow$ \eqref{lem-remark-1}. Suppose that $G\cdot z$ is locally closed
 in $X$. Then $G\cdot z$ is a relatively open subset of the locally
 compact space $\overline{G\cdot z}$ and hence $G\cdot z$ is locally
 compact.  Thus $(G,G\cdot z)$ is a second-countable,
 locally compact, Hausdorff transformation group.  In particular, it
 follows from \cite[Theorem~1]{Gli} that the map $\psi_z:G/S_z\to
 G\cdot z:sS_z\mapsto s\cdot z$ is a homeomorphism.
Let $U$ be an open subset  of $X$ suc
h that $U\cap\overline{G\cdot
z}=G\cdot z$. Let $N$ be a compact neighbourhood of $z$ in $X$ such
that $N\subset U$. Then $N\cap \overline{G\cdot z}=N\cap G\cdot z$
is a compact subset of $G\cdot z$. Hence $\psi_z^{-1}(N)$ is compact
in $G/S_z$. Now $\psi_z^{-1}(N)=
q_z(\phi_z^{-1}(N))\supset q_z(\phi_z^{-1}({\rm Int}N))$; this contradicts \eqref{lem-remark-4} with $V=\interior N$.
\end{proof}

Lemma~\ref{lem-pointwise} is used in Lemma~\ref{lem-supmeasures2} and in Theorem~\ref{thm-lowermult} below.

\begin{lemma}\label{lem-pointwise}
Let $(G,X)$ be a second-countable transformation group. Let $z\in X$
and let  $(x_n)_n$ be a sequence in $X$ such that $S_{x_n}\to S_z$ in $\Sigma$.
Suppose that  $W$ is a compact subset of $G$. Then
\[
\Chi_{WS_{x_n}}(r)\to \Chi_{WS_z}(r)
\]
for $r\in(G\setminus WS_z)\cup (\interior W)S_z$. If
$\mu\big(WS_z\setminus(\interior W)S_z\big)=0$, then
$
\Chi_{WS_{x_n}}\to \Chi_{WS_z}
$
almost everywhere.
\end{lemma}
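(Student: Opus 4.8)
The plan is to prove the two assertions separately, in both cases working pointwise at a fixed $r\in G$ and exploiting the characterization of Fell-topology convergence recalled in Section~\ref{sec-prelim}, namely conditions (1) and (2) after the definition of $U(C,F)$. Since $S_{x_n}\to S_z$ in $\Sigma$ and $W$ is compact, I expect the product sets $WS_{x_n}$ to converge to $WS_z$ in the Fell topology on closed subsets of $G$ in a suitable sense; but rather than invoking that, I will argue directly with sequences.

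\medskip

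First I would handle $r\in G\setminus WS_z$. Here $\Chi_{WS_z}(r)=0$, so I must show $r\notin WS_{x_n}$ for all large $n$. Suppose not; then along a subsequence $r=w_n s_n$ with $w_n\in W$ and $s_n\in S_{x_n}$. By compactness of $W$, pass to a further subsequence so that $w_n\to w\in W$; then $s_n=w_n^{-1}r\to w^{-1}r$. By part (2) of the Fell convergence criterion (applied to the convergent subnet of $(S_{x_n})$), $w^{-1}r\in S_z$, whence $r=w(w^{-1}r)\in WS_z$, a contradiction. Next I would handle $r\in(\interior W)S_z$. Here $\Chi_{WS_z}(r)=1$, so I must show $r\in WS_{x_n}$ for all large $n$. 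Write $r=v s$ with $v\in\interior W$ and $s\in S_z$. By part (1) of the criterion there exist (after passing to a subnet) $s_\mu\in S_{x_{n(\mu)}}$ with $s_\mu\to s$; then $rs_\mu^{-1}s\cdot \dots$ — more cleanly, $r s^{-1} s_\mu\to r$ is not quite it, so instead note $v\,s\,s_\mu^{-1}\to v\in\interior W$, hence $v s s_\mu^{-1}\in W$ eventually, giving $r=(v s s_\mu^{-1})\,s_\mu\in WS_{x_n}$ eventually. The subnet/subsequence bookkeeping needs the standard trick: if the conclusion failed, it would fail along a subsequence, and then one extracts a \emph{further} subnet on which the Fell criterion applies, producing a contradiction; so in fact the conclusion holds along the full sequence. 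This interplay between "subnet" in the Fell criterion and "subsequence" in what we want to prove is the one genuinely delicate bookkeeping point.

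\medskip

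For the final "almost everywhere" claim, observe that $G$ is the disjoint union of the three sets $G\setminus WS_z$, $(\interior W)S_z$, and $WS_z\setminus(\interior W)S_z$. On the first two, pointwise convergence $\Chi_{WS_{x_n}}(r)\to\Chi_{WS_z}(r)$ has just been established. The hypothesis $\mu\big(WS_z\setminus(\interior W)S_z\big)=0$ says exactly that the remaining set is $\mu$-null, so convergence holds $\mu$-almost everywhere. No estimate is needed here; it is just a decomposition of $G$ together with the measure hypothesis.

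\medskip

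The main obstacle I anticipate is purely the quantifier management around subnets versus subsequences: the Fell convergence criterion is phrased with subnets, while the statements to be proved ("for all large $n$", i.e. eventually along the sequence) are about sequences. The resolution is routine — argue by contradiction, pass to a subsequence witnessing the failure, then extract a further subnet to invoke (1) or (2) — but it must be done carefully so as not to accidentally weaken "eventually" to "frequently". Everything else (compactness of $W$ to extract convergent $w_n$, continuity of multiplication and inversion in $G$) is standard.
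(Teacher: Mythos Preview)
Your proposal is correct and follows essentially the same approach as the paper's proof: the paper likewise argues by contradiction using compactness of $W$ for $r\notin WS_z$, writes $r=vs$ with $v\in\interior W$ and approximates $s$ by $s_\mu\in S_{x_{n(\mu)}}$ for $r\in(\interior W)S_z$ (using a symmetric neighbourhood $U$ with $vU\subseteq\interior W$ where you use continuity of $vss_\mu^{-1}\to v$, which amounts to the same thing), and then handles the full-sequence conclusion via the same subsequence-of-a-subsequence contradiction. The only cosmetic difference is that the paper first records $WS_{x_n}\to WS_z$ in the Fell topology as an intermediate claim before deducing the $r\notin WS_z$ case from it, whereas you inline that argument directly; your version is marginally more economical.
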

\begin{proof}  The second  statement follows immediately from the first.

Since $W$ is compact both $WS_{x_n}$ and  $WS_z$ are closed. We
claim  that $WS_{x_n}\to WS_z$ in the Fell topology on the closed
subsets of $X$. First let $s\in WS_z$, say $s=wt$ where $w\in W$ and
$t\in S_z$. There exist a subsequence $(x_{n_i})$ and  $t_{n_i}\in
S_{x_{n_i}}$ such that $t_{n_i}\to t$. So $wt_{n_i}\to wt=s$ and
$wt_{n_i}\in WS_{n_i}$ as required. Second, consider  $(s_n)$ with
$s_n\in WS_{x_n}$, say $s_n=w_nt_n$, and suppose that $s_n\to s$. By
compactness there exists a subsequence $(w_{n_i})$ such that
$w_{n_i}\to w\in W$. Then $t_{n_i}=w_{n_i}^{-1}s_{n_i}\to w^{-1}s$,
and hence $w^{-1}s\in S_z$. Thus $s\in wS_z\subset WS_z$ as required, and
$WS_{x_n}\to WS_z$ in the Fell topology as claimed.

It follows from $WS_{x_n}\to WS_z$ that $\Chi_{WS_{x_n}}(s)\to \Chi_{WS_z}(s)=0$ for $s\in G\setminus WS_z$. For if not then there exists a subsequence $x_{n_i}$ such that $s\in WS_{x_{n_i}}$. Set $s_{n_i}=s$ and note $s_{n_i}\to s$.  But then $s\in WS_z$ since $WS_{x_n}\to WS_z$, a contradiction.

Now let $s\in (\interior W)S_z$.  We first claim that there exists a
subsequence $(x_{n_i})$ such that $\Chi_{WS_{x_{n_i}}}(s)\to
\Chi_{WS_z}(s)=1$. To see this, write
$s=wt$ where $w\in \interior W$ and $t\in S_z$. Then there exist
$t_{n_i}\in S_{x_{n_i}}$ such that $t_{n_i}\to t$.  Choose a
symmetric neighbourhood $U$ of $e$ in $G$ such that $wU\subseteq
\interior W$.  Then $t_{n_i}\in Ut$ eventually, so eventually there
exist $u_i\in U$ such that $t_{n_i}=u_{i}t$. Thus
$s=wt=wu_{i}^{-1}t_{n_i}\in wUS_{n_i}\subseteq(\interior
W)S_{x_{n_i}}$. So $\Chi_{WS_{x_{n_i}}}(s)=1$ eventually, as
claimed.

Now suppose $s\in (\interior W)S_z$ and that
$\Chi_{WS_{x_n}}(s)\not\to \Chi_{WS_z}(s)=1$. Then there exists a subsequence $(x_{n(j)})$  such that $s\notin WS_{x_{n(j)}}$ for all $j$. But applying the argument of the preceding paragraph we get a further subsequence $(x_{n(j(i))})$ such that $s\in WS_{x_{n(j(i))}}$ eventually, a contradiction. Thus $\Chi_{WS_{x_n}}(s)\to \Chi_{WS_z}(s)=1$ for $s\in (\interior W)S_z$.
\end{proof}

\begin{lemma}\label{lem-supmeasures2}
Let $(G, X)$ be a second-countable transformation group.  Let $z\in X$, and let $(x_n)_{n\geq 1}$ be a sequence in $X$ such that $S_{x_n}\to S_z$  in $\Sigma$,  and  $S_z$ and all the $S_{x_n}$  are normal in $G$.  Let $W$ be a compact subset in $G$. Then $\limsup_n\nu_{x_n}(q_{x_n}(W))\leq  \nu_z(q_z(W))$.
\end{lemma}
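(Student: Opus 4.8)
The plan is to prove the equivalent statement that $\limsup_n\nu_{x_n}(q_{x_n}(W))\le(1+\epsilon)\,\nu_z(q_z(W))$ for every $\epsilon>0$, and then let $\epsilon\to 0$; note that $q_z(W)$ is compact, so $\nu_z(q_z(W))<\infty$. The mechanism is to trap both $\nu_{x_n}(q_{x_n}(W))$ and $\nu_z(q_z(W))$ between integrals over $G$ of the form $\int_G\chi_{WS_{x_n}}f\,d\mu$ and $\int_G\chi_{WS_z}f\,d\mu$ for a single well-chosen $f\in C_c(G)$ with $f\ge 0$. Fix $c$ with $1<c<1+\epsilon$. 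Using Urysohn's lemma on the locally compact Hausdorff space $G/S_z$, choose $\rho\in C_c(G/S_z)$ with $0\le\rho\le 1$ and $\rho\equiv 1$ on the compact set $q_z(W)$, and then, by the standard surjectivity of the averaging map $C_c(G)\to C_c(G/S_z)$ onto its positive cone, choose $f\in C_c(G)$ with $f\ge 0$ and $\int_{S_z}f(wt)\,d\alpha_z(t)=c\,\rho(\dot w)$ for all $w\in G$; in particular this integral equals $c$ for every $w\in W$.

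For the lower trap, since $S_{x_n}\to S_z$ in $\Sigma$ the continuity of the choice of Haar measures — applied to the finitely many functions $t\mapsto f(w_jt)$, where $w_1,\dots,w_m\in W$ form a sufficiently fine net, and combined with the left uniform continuity of $f$ and the uniform bound $\sup_n\alpha_{x_n}(W^{-1}\,\supp f)<\infty$ — shows that $\int_{S_{x_n}}f(wt)\,d\alpha_{x_n}(t)\to c$ uniformly for $w\in W$. Hence there is $N$ with $\int_{S_{x_n}}f(wt)\,d\alpha_{x_n}(t)>1$ for all $w\in W$ and $n\ge N$; by left-invariance of $\alpha_{x_n}$ this integral depends only on the coset $wS_{x_n}$, so $\int_{S_{x_n}}f(st)\,d\alpha_{x_n}(t)>1$ whenever $\dot s\in q_{x_n}(W)$ and $n\ge N$. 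Applying \eqref{quotientmeasure2} with $H=S_{x_n}$ and $E=WS_{x_n}$ (using $WS_{x_n}S_{x_n}=WS_{x_n}$, so that $\chi_{WS_{x_n}}(st)=\chi_{q_{x_n}(W)}(\dot s)$ for $t\in S_{x_n}$), we obtain, for $n\ge N$,
\[
\nu_{x_n}(q_{x_n}(W))\le\int_{G/S_{x_n}}\chi_{q_{x_n}(W)}(\dot s)\Big(\int_{S_{x_n}}f(st)\,d\alpha_{x_n}(t)\Big)d\nu_{x_n}(\dot s)=\int_G\chi_{WS_{x_n}}f\,d\mu.
\]

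For the upper trap and the passage to the limit, Lemma~\ref{lem-pointwise} gives $\chi_{WS_{x_n}}(s)\to\chi_{WS_z}(s)=0$ for every $s\notin WS_z$, while on $WS_z$ we trivially have $0\le\chi_{WS_{x_n}}(s)\le 1=\chi_{WS_z}(s)$; hence $\limsup_n[\chi_{WS_{x_n}}(s)f(s)]\le\chi_{WS_z}(s)f(s)$ for all $s$, and since the integrands are dominated by $f\in C_c(G)$ the reverse Fatou lemma yields $\limsup_n\int_G\chi_{WS_{x_n}}f\,d\mu\le\int_G\chi_{WS_z}f\,d\mu$. Applying \eqref{quotientmeasure2} once more with $H=S_z$ and $E=WS_z$, and recalling $\int_{S_z}f(st)\,d\alpha_z(t)=c$ for $\dot s\in q_z(W)$, gives $\int_G\chi_{WS_z}f\,d\mu=c\,\nu_z(q_z(W))$. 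Chaining the three steps yields $\limsup_n\nu_{x_n}(q_{x_n}(W))\le c\,\nu_z(q_z(W))\le(1+\epsilon)\,\nu_z(q_z(W))$, and $\epsilon\to 0$ finishes the proof.

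The step I expect to be the main obstacle is the uniformity in the lower trap: the stated continuity of the choice of Haar measures concerns one function of $C_c(G)$ at a time and must be upgraded to the family $\{t\mapsto f(wt)\}_{w\in W}$ indexed by the compact set $W$. This is where the left uniform continuity of $f$ enters — it makes $|f(wt)-f(w_jt)|$ small uniformly in $t$ for $w$ near $w_j$, since $wt(w_jt)^{-1}=ww_j^{-1}$ — together with the uniform bound on $\alpha_{x_n}$ over a fixed compact set containing all the $w^{-1}\supp f$. A secondary subtlety, and precisely the reason this lemma, unlike Lemma~\ref{lem-pointwise}, needs no hypothesis on $\mu(WS_z\setminus(\interior W)S_z)$, is that $\chi_{WS_{x_n}}$ may genuinely fail to converge on $WS_z\setminus(\interior W)S_z$; but there $\chi_{WS_z}\equiv 1$ dominates $\chi_{WS_{x_n}}$, so the one-sided $\limsup$ estimate survives even though an equality could not be obtained this way.
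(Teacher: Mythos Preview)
Your proof is correct, but the paper takes a somewhat different (and slightly slicker) route. The paper invokes a cut-down approximate Bruhat cross section $b\in C_c(G\times\Sigma)$ satisfying $\int_H b(rt,H)\,d\alpha_H(t)=1$ for all $r\in WH$ and all $H\in\Sigma$ simultaneously; this gives the exact identity $\nu_{x_n}(q_{x_n}(W))=\int_G\chi_{WS_{x_n}}(r)\,b(r,S_{x_n})\,d\mu(r)$, and then the splitting into $WS_z$ and $G\setminus WS_z$, together with the continuity of $b(\cdot,S_{x_n})\to b(\cdot,S_z)$ and Lemma~\ref{lem-pointwise} on the complement, shows that the right-hand side has $\limsup$ bounded by $\int_G\chi_{WS_z}(r)\,b(r,S_z)\,d\mu(r)=\nu_z(q_z(W))$ directly, with no $\epsilon$.

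Your approach instead fixes a single $f\in C_c(G)^+$ tailored to $S_z$ via the averaging map, then spends the effort on establishing the uniform convergence $\int_{S_{x_n}}f(wt)\,d\alpha_{x_n}(t)\to c$ over $w\in W$; this replaces the equality by the inequality $\nu_{x_n}(q_{x_n}(W))\le\int_G\chi_{WS_{x_n}}f\,d\mu$ and forces the $\epsilon$-argument at the end. What your approach buys is that it avoids citing the Bruhat cross-section machinery and is entirely self-contained modulo standard facts about $C_c(G)$ and the continuous choice of Haar measures; what the paper's approach buys is that the $\Sigma$-continuity of $b$ absorbs your uniformity argument into a single citation, and the argument becomes a direct Dominated Convergence computation with no approximation. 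Both proofs rest on the same one-sided use of Lemma~\ref{lem-pointwise} (only the vanishing off $WS_z$ is needed), which is why neither requires the boundary hypothesis $\mu(WS_z\setminus(\interior W)S_z)=0$.
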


\begin{proof}
Since $S_{x_n}\to S_z$, $\chi_{WS_{x_n}}(r)\to \chi_{WS_z}(r)$ for $r\in G\setminus WS_z$ by Lemma~\ref{lem-pointwise}.  By \cite[Proposition~2.18]{W} or \cite[Proposition~H.17]{tfb^2} there exists a ``cut-down approximate Bruhat cross section''  $b\in C_c(G\times \Sigma)$: thus  $b\geq 0$ and, for all $H\in\Sigma$,  $\int_{H} b(rt,H)\, d\alpha_H(t)=1$ for $r\in WH$. Now
\begin{align}
\nu_{x_n}(q_{x_n}(W))&=\int_{G/S_{x_n}}\chi_{q_{x_n}(W)}(\dot r)\, d\nu_{x_n}(\dot r)\notag\\
&=\int_{G/S_{x_n}}\chi_{q_{x_n}(W)}(\dot r) \int_{S_{x_n}} b(rt,S_{x_n})\, d\alpha_{x_n}(t)   \, d\nu_{x_n}(\dot r)\notag\\
&=\int_{G/S_{x_n}} \int_{S_{x_n}}\chi_{WS_{x_n}}(rt) b(rt,S_{x_n})\, d\alpha_{x_n}(t)   \, d\nu_{x_n}(\dot r)\notag\\
&=\int_G\chi_{WS_{x_n}}(r) b(r,S_{x_n})\, d\mu(r)\quad \text{(using \eqref{quotientmeasure2})}\label{samecalc}\\
&\leq \int_{r\in WS_z}b(r,S_{x_n})\, d\mu(r)+
\int_{r\in G\setminus WS_z}\chi_{WS_{x_n}}(r) b(r,S_{x_n})\, d\mu(r).\notag
\end{align}
Let $K$ be the image of $\supp b$ under the coordinate projection $G\times X\to G$.  Since $S_{x_n}\to S_z$, $b$ is continuous,  and $\chi_{WS_{x_n}}(r)\to 0$ for $r\in G\setminus WS_z$, we may apply the Dominating Convergence Theorem with dominating functions $\|b\|_\infty\chi_{(WS_z)\cap K}$ and $\|b\|_\infty\chi_{(G\setminus WS_z)\cap K}$ to show that the sum of integrals converges to
\begin{align*}
\int_{r\in WS_z}b(r,S_z)\, d\mu(r)+0
&= \int_G\chi_{WS_{z}}(r)b(r,S_z)\, d\mu(r).
\end{align*}
But $\int_G\chi_{WS_{z}}(r)b(r,S_z)\, d\mu(r)$ equals $\nu_{z}(q_{z}(W))$ by the calculation above ending at \eqref{samecalc}. Thus
\[
\nu_{x_n}(q_{x_n}(W))\leq \int_{r\in WS_z}b(r,S_{x_n})\, d\mu(r)+
\int_{r\in G\setminus WS_z}\chi_{WS_{x_n}}(r) b(r,S_{x_n})\, d\mu(r)\to \nu_{z}(q_{z}(W)),
\]
and the lemma follows.
\end{proof}

We can now extend \cite[Proposition~4.1]{AaH} to the non-free case.

\begin{prop}\label{prop-tsoc2}
Let $(G, X)$ be a second-countable transformation group. Let $z\in X$ with $G\cdot z$ locally closed in $X$ and $S_z$ compact. Assume that $(x_n)_{n\geq 1}$ is a sequence in $X$ such that $S_{x_n}\to S_z$ in $\Sigma$, that $S_{x_n}$ and $S_z$ are normal in $G$, and that $G\cdot z$ is the unique limit  of $(G\cdot x_n)_n$ in $X/G$.  Let $k\in\P$, and suppose that there exists a basic sequence $(W_m)_{m\geq 1}$ of compact neighbourhoods of $z$ with $W_{m+1}\subset W_m$, such that, for each $m$,
\[
\liminf_n\nu_{x_n}(q_{x_n}(\phi_{x_n}^{-1}(W_m)))>(k-1)\nu_z(q_z(\phi_z^{-1}(W_m))).
\]
Then  $(x_n)$  converges $k$-times in $X/G$ to $z$.
\end{prop}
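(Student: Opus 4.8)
The plan is to construct the $k$ sequences $(t_n^{(i)})_n$ for each fixed $n$ by an inductive argument that mirrors the proof of the implication \eqref{lem-remark-1} $\Longrightarrow$ \eqref{lem-remark-2} in Lemma~\ref{lem-remark}, but now using the measure hypothesis rather than a failure of local closedness to find translates landing in $W_m$ whose $S_{x_n}$-cosets are far apart. First I would fix a decreasing basic sequence $(W_m)$ as in the statement and an increasing sequence $(K_m)$ of compact subsets of $G$ with $G=\bigcup_m\interior(K_m)$; I may assume each $K_m$ is symmetric and contains $e$. The key to the induction is the following claim: for each $n$ and each compact $K\subseteq G$, once we have already chosen $t_n^{(1)},\dots,t_n^{(j-1)}$ with $t_n^{(i)}\cdot x_n\in W_m$, there exists $t_n^{(j)}\in G$ with $t_n^{(j)}\cdot x_n\in W_m$ and $t_n^{(j)}\notin \bigcup_{i<j}Kt_n^{(i)}S_{x_n}$. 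Granting the claim, setting $t_n^{(1)}=e$ (legitimate since $z\in W_m$ and $x_n\to z$... here one must be a little careful: we actually want $t_n^{(1)}\cdot x_n\in W_m$ for large $n$, which follows from $G\cdot x_n\to G\cdot z$ together with a translate; see below) and feeding in $K=K_n$ at stage $n$ produces the required data, exactly as in Lemma~\ref{lem-remark}: condition (1) of Definition~\ref{defn_k-times} holds because the $W_m$ shrink to $z$, and condition (2) holds because if $t_n^{(j)}(t_n^{(i)})^{-1}S_{x_n}$ met a fixed compact $K$ frequently then, choosing $N$ with $K\subseteq K_N$ and using normality of $S_{x_n}$, we would get $t_n^{(j)}\in K_n t_n^{(i)}S_{x_n}$ for some large $n$, contradicting the construction.

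To prove the claim, suppose for contradiction that for some $n$ and some compact $K$ (which we may take symmetric, with $K^{-1}K$ replaced by a larger compact set $L$) we have already chosen $t_n^{(1)},\dots,t_n^{(j-1)}$ but $\phi_{x_n}^{-1}(W_m)\subseteq \bigcup_{i=1}^{j-1}L t_n^{(i)}S_{x_n}$ with $j-1\le k-1$. Then, pushing down to $G/S_{x_n}$ and using right-invariance of $\nu_{x_n}$ together with $L$ compact,
\[
\nu_{x_n}\big(q_{x_n}(\phi_{x_n}^{-1}(W_m))\big)\le \sum_{i=1}^{j-1}\nu_{x_n}\big(q_{x_n}(Lt_n^{(i)})\big)=(j-1)\,\nu_{x_n}\big(q_{x_n}(L)\big)\le (k-1)\,\nu_{x_n}\big(q_{x_n}(L)\big).
\]
Taking $\liminf_n$ on the left and applying Lemma~\ref{lem-supmeasures2} to the compact set $L$ on the right, we obtain
\[
\liminf_n\nu_{x_n}\big(q_{x_n}(\phi_{x_n}^{-1}(W_m))\big)\le (k-1)\,\limsup_n\nu_{x_n}(q_{x_n}(L))\le (k-1)\,\nu_z(q_z(L)).
\]
This is not yet the desired contradiction, because $L$ need not be contained in $\phi_z^{-1}(W_m)$. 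To fix this I would first shrink: by continuity of the action there is a compact neighbourhood $N$ of $e$ and an open neighbourhood $U$ of $z$ with $N\cdot U\subseteq W_m$; enlarging $N$ if necessary we arrange that the fixed compact set $K$ used to separate cosets is itself small enough that $K^{-1}K=L\subseteq N\cdot(\text{something})$... the cleanest route is to run the separation argument with $K$ a compact \emph{neighbourhood of $e$ contained in $N$}, so that $L=K^{-1}K\subseteq NN$, and to have chosen $W_m$ small enough that $NN\cdot z\subseteq W_{m-1}$ (possible after passing to a tail of the $W_m$, using that $(W_m)$ is basic and $NN$ is compact). Then $L\subseteq \phi_z^{-1}(W_{m-1})$, hence $\nu_z(q_z(L))\le \nu_z(q_z(\phi_z^{-1}(W_{m-1})))$, and the chain of inequalities contradicts the hypothesis applied at index $m-1$. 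One must also dispose of the base case $j=1$: since $G\cdot z$ is the unique limit of $(G\cdot x_n)$, for large $n$ there is $t_n^{(1)}\in G$ with $t_n^{(1)}\cdot x_n\in U\subseteq W_m$; for the finitely many remaining $n$ we may choose the $t_n^{(i)}$ arbitrarily, as this does not affect any of the asymptotic conditions.

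The main obstacle, and the point requiring the most care, is precisely this mismatch between the compact sets that naturally appear: the separation argument wants to exclude cosets $K t_n^{(i)}S_{x_n}$ for a compact $K$ large enough that distinctness of cosets in $G/S_{x_n}$ persists under the measure estimate, but the hypothesis only controls $\nu_z(q_z(\phi_z^{-1}(W_m)))$, forcing the relevant compact set to sit inside $\phi_z^{-1}(W_m)$ (or a slightly larger $W_{m-1}$). Threading this needle requires choosing the separating compact neighbourhood $K$ of $e$ first, then passing to a cofinal subsequence of the $(W_m)$ so that $K^{-1}K\cdot z$ lies in the next-larger neighbourhood — which is harmless since $k$-times convergence only depends on the tail behaviour and reindexing a basic neighbourhood sequence changes nothing. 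A secondary technical point is that, as in Lemma~\ref{lem-remark}, the separation of cosets must be phrased via compact sets $K_n$ exhausting $G$ so that the diagonal choice $n\mapsto K_n$ yields the genuine ``$\to\infty$'' conclusion; this is routine once the claim is in hand. Everything else — interchanging $\liminf$/$\limsup$, right-invariance, subadditivity over the $j-1\le k-1$ cosets — is bookkeeping.
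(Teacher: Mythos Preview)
Your proposal has a genuine gap at precisely the point you flag as the ``main obstacle,'' and the fix you sketch cannot work. To obtain condition~(2) of Definition~\ref{defn_k-times} you must exclude $t_n^{(j)}$ from $K_m t_n^{(i)}S_{x_n}$ with the $K_m$ \emph{exhausting} $G$, as you correctly say in your final paragraph. But your fix for the measure mismatch is to take the separating compact $K$ to be a \emph{small} neighbourhood of $e$, small enough that $L=K^{-1}K$ lies in $\phi_z^{-1}(W_{m-1})$. These two requirements are incompatible: the sets $q_z(\phi_z^{-1}(W_{m-1}))$ are compact (this is where local closedness enters) and shrink as $m$ grows, while the $K_m$ must grow without bound. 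So the inequality $\nu_z(q_z(L))\le\nu_z(q_z(\phi_z^{-1}(W_{m-1})))$ can never hold once $K_m$ is large, and the contradiction never arrives. A related symptom: your argument makes no essential use of the hypotheses that $G\cdot z$ is locally closed and that $G\cdot z$ is the \emph{unique} limit of $(G\cdot x_n)$, beyond the trivial base case; but without these the statement is false (cf.\ Lemma~\ref{lem-remark}). There is also a quantifier slip in the claim: you derive an inequality for a single $n$ and then take $\liminf_n$ of it.

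The paper's proof resolves this by bounding not the full translate $q_{x_n}(K_m t_n^{(i)})$ but the \emph{intersection} $q_{x_n}\big(K_m t_n^{(i)}S_{x_n}\cap\phi_{x_n}^{-1}(W_m)\big)$. The key input is the excision lemma \cite[Lemma~3.2]{AaH}: under the unique-limit hypothesis, for all large $n$ and every $s\in\phi_{x_n}^{-1}(W_m)$ there exists $r\in\phi_z^{-1}(W_m)$ with $K_m s\cap\phi_{x_n}^{-1}(W_m)\subset U_m r^{-1}s$, where $U_m=q_z^{-1}(A_m)$ is a fixed precompact neighbourhood of $\phi_z^{-1}(W_m)$ only slightly larger in $\nu_z$-measure (obtained via regularity). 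Combined with right-invariance and Lemma~\ref{lem-supmeasures2} applied to $\overline{U_m}$, this yields $\nu_{x_n}\big(q_{x_n}(K_m sS_{x_n}\cap\phi_{x_n}^{-1}(W_m))\big)\le\nu_z(\overline{A_m})+\epsilon_m/k$ for all large $n$, a bound independent of how large $K_m$ is. With this excision estimate in hand, the inductive selection of $t_n^{(1)},\dots,t_n^{(k)}\in\phi_{x_n}^{-1}(W_m)$ proceeds essentially as you outline.
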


\begin{proof}
Since $G\cdot z$ is locally closed in $X$, there is an open subset
$U$ of $X$ such that $U\cap\overline{G\cdot z}=G\cdot z$. We may
assume, without loss of generality, that $W_m\subset U$ for all
$m\geq1$. It then follows from \cite[Theorem~1]{Gli}, as in the
proof of Lemma~\ref{lem-remark}, that $q_z(\phi_z^{-1}(W_m))$ is
compact in $G/S_z$ for all $m\geq1$.  Let $(K_m)_{m\geq 1}$ be an
increasing sequence of compact subsets of $G$ such that
$G=\cup_{m\geq 1}\interior(K_m)$.

Let $m\geq 1$.  It follows from the regularity of $\nu_z$ that there
exists an open neighbourhood $V_m$ of $q_z(\phi_z^{-1}(W_m))$ and
$\epsilon_m>0$ such that
\begin{equation*}
\liminf_n\nu_{x_n}(q_{x_n}((\phi_{x_n}^{-1}(W_m))))>(k-1)\nu_z(V_m) +\epsilon_m.
\end{equation*}
Since  $q_z(\phi_z^{-1}(W_m))$ is compact and $G/S_z$ is locally
compact, there exists an open precompact neighbourhood $A_m$ of
$q_z(\phi_z^{-1}(W_m))$  such that $\overline{A_m}\subset V_m$. We
have
\begin{equation}\label{eq-reg-cl}
\liminf_n\nu_{x_n}(q_{x_n}((\phi_{x_n}^{-1}(W_m))))>(k-1)\nu_z(\overline{A_m})
+\epsilon_m.
\end{equation}
Set $U_m:=q_z^{-1}(A_m)$, so that $U_m$ is an open neighbourhood of $\phi_z^{-1}(W_m)$ and is precompact since both $\overline{A_m}$ and $S_z$ are compact.

We will  construct, by induction,  a strictly increasing sequence of
positive integers $(n_m)_{m\geq 1}$ such that, for all $n\geq
n_m$,
\begin{align}
\nu_{x_n}(q_{x_n}(K_msS_{x_n}\cap\phi_{x_n}^{-1}(W_m)))&\leq\nu_z(\overline{A_m})+\epsilon_m/k\quad\text{for all\
}s\in\phi_{x_n}^{-1}(W_m),
\text{\ and}\label{eq-excised}\\
\nu_{x_n}(q_{x_n}(\phi_{x_n}^{-1}(W_m)))&>(k-1)\nu_z(\overline{A_m})+\epsilon_m.
\label{eq-measure}
\end{align}
We construct $n_1$ by applying \cite[Lemma~3.2]{AaH} to
$K_1, W_1$ and $U_1$ to obtain $n_1$ such that for every $n\geq n_1$ and every $s\in\phi_{x_n}^{-1}(W_1)$ there exists $r\in \phi_z^{-1}(W_1)$ such that
$
K_1s\cap\phi_{x_n}^{-1}(W_1)\subset U_1r^{-1}s.
$
Then
$
K_1s S_{x_n}\cap\phi_{x_n}^{-1}(W_1)\subset U_1r^{-1}s S_{x_n}
$
 and hence
\[\nu_{x_n}(q_{x_n}(K_1sS_{x_n}\cap\phi_{x_n}^{-1}(W_1)))\leq \nu_{x_n}(q_{x_n}(U_1r^{-1}sS_{x_n}))=\nu_{x_n}(q_{x_n}(U_1))\]
because $S_{x_n}$ is normal and the measures $\nu_{x_n}$ have been chosen to be right invariant.
Since $S_{x_n}\to S_z$ ,  $\limsup \nu_{x_n}(q_{x_n}(\overline{U_1}))\leq \nu_z(q_z(\overline{U_1}))\leq \nu_z(\overline{A_1})$ by  Lemma~\ref{lem-supmeasures2}. So  by increasing $n_1$ if necessary, \eqref{eq-excised} holds for $m=1$.
If necessary, we can increase $n_1$ again to ensure that \eqref{eq-measure}
holds by using \eqref{eq-reg-cl} with $m=1$.

Assuming that we have constructed $n_1<n_2<\dots<n_{m-1}$,
we apply \cite[Lemma~3.2]{AaH} to $K_m, W_m$ and $U_m$
to obtain $n_m>n_{m-1}$ such that for $n>n_m$ and  every $s\in\phi_{x_n}^{-1}(W_m)$
 there exists  $r\in \phi_z^{-1}(W_1)$ such that
$
K_ms\cap\phi_{x_n}^{-1}(W_m)\subset U_mr^{-1}s.
$
Then
$
K_ms S_{x_n}\cap\phi_{x_n}^{-1}(W_m)\subset U_mr^{-1}s S_{x_n}.
$
Applying Lemma~~\ref{lem-supmeasures2} to $\overline{U_m}$, we have
\[\nu_{x_n}(q_{x_n}(K_msS_{x_n}\cap\phi_{x_n}^{-1}(W_m)))
\leq \nu_{x_n}(q_{x_n}(U_m))\leq \nu_{x_n}(q_{x_n}(\overline{U_m}))\leq \nu_z(q_z(\overline{U_m})) +\epsilon_m/k\]
eventually.
Since $q_z(\overline{U_m})\subset A_m$ we can  increase $n_m$ twice if necessary to obtain first  \eqref{eq-excised} and then \eqref{eq-measure}.

If $n_1>1$ then, for $1\leq n<n_1$, we set $t^{(i)}_n=e$ for $1\leq i\leq k$.
For each $n\geq n_1$ there is a unique $m$ such that $n_m\leq n< n_{m+1}$.
Choose $t^{(1)}_n\in\phi_{x_n}^{-1}(W_m)$.  Using \eqref{eq-excised} and \eqref{eq-measure}
\begin{align}
\nu_{x_n}\big(q_{x_n}\big(\phi_{x_n}^{-1}(W_m)\setminus K_mt^{(1)}_nS_{x_n}\big)\big)
&\geq \nu_{x_n}\big(q_{x_n}(\phi_{x_n}^{-1}(W_m))\setminus
q_{x_n}(\phi_{x_n}^{-1}(W_m)\cap K_mt^{(1)}_nS_{x_n})\big)\notag\\
&>(k-2)\nu_z(\overline{A_m})+(k-1)\epsilon_m/k.\label{eq-new}
\end{align}
So if $k\geq 2$ we may choose $t_n^{(2)}$  such that $t_n^{(2)}\in \phi_{x_n}^{-1}(W_m)\setminus K_mt^{(1)}_nS_{x_n}$.

Next, using the formal relation $q_{x_n}(A\setminus (B\cup
C))\supset q_{x_n}(A\setminus B)\setminus q_{x_n}(A\cap C)$, and
\eqref{eq-excised} and \eqref{eq-new}
\[
\nu_{x_n}(q_{x_n}(\phi_{x_n}^{-1}(W_m)\setminus(
K_mt^{(1)}_nS_{x_n}\cup K_mt^{(2)}_nS_{x_n})))>
(k-3)\nu_z(\overline{A_m})+(k-2)\epsilon_m/k.
\]
So if $k\geq 3$ we may choose  $t_n^{(3)}\in \phi_{x_n}^{-1}(W_m)\setminus (K_mt^{(1)}_nS_{x_n}\cup K_mt^{(2)}_nS_{x_n})$.
Continuing in this way, we obtain  $t_n^{(1)},\dots, t_n^{(k)}\in \phi_{x_n}^{-1}(W_m)$ such that, for $1\leq j\leq k$
\begin{equation*}
t_n^{(j)}\in \phi_{x_n}^{-1}(W_m)\setminus (\cup_{i=1}^{j-1}K_mt^{(i)}_nS_{x_n}).
\end{equation*}
Note that for $n_m\leq n<n_{m+1}$ we have
\[
t_n^{(i)}\cdot x_n\in W_m\text{\ for\ }1\leq i\leq k\text{\ and}\quad
t_n^{(j)}\notin K_mt_n^{(i)}S_{x_n}\text{\ for\ }1\leq i<j\leq k.
\]

Now we will show that $x_n$ converges $k$-times in $X/G$ to $z$.
To see that $t_n^{(i)}\cdot x_n\to z$ as $n\to\infty$ for $1\leq i\leq k$, fix $i$ and let $V$ be a neighbourhood of $z$.
There exists $m_0$ such that $W_m\subset V$ for all $m\geq m_0$.
For each $n\geq n_{m_0}$ there exists   $m\geq m_0$ such that $n_m\leq n<n_{m+1}$,
and thus $t_n^{(i)}\cdot x_n\in W_m\subset V$. Thus  $t_n^{(i)}\cdot x_n\to z$ as $n\to\infty$ for $1\leq i\leq k$ as claimed.

Finally, fix a compact set $K$ in $G$. There exists $m_0$ such that $K\subset K_m$ for all $m\geq m_0$. Then for $m\geq m_0$, $n_m\leq n<n_{m+1}$ and $1\leq i<j\leq k$ we have $t_n^{(j)}\notin K_mt_n^{(i)}S_{x_n}=K_mS_{x_n}t_n^{(i)}$ and hence $t_n^{(j)} (t_n^{(i)})^{-1}S_{x_n}\subset G\setminus K$.   So for $n\geq n_{m_0}$  and $1\leq i<j\leq k$, $t_n^{(j)}(t_n^{(i)})^{-1}S_{x_n}\subset G\setminus K$ as required.
\end{proof}

\section{$k$-times convergence and  lower bounds on multiplicity}\label{sec-lowerbounds}

Throughout this section $G$ is assumed to be abelian. Let $k\in\P$. Here we introduce induced representations of the crossed product $C_0(X)\rtimes G$, and consider a sequence $(\pi_n)_n$ of induced representations  converging to an induced representation $\pi$. We  establish sufficient conditions which ensure that $M_L(\pi,(\pi_n))\geq k$. The dual action of the character group $\hat G$ on $C_0(X)\rtimes G$ plays a major role in our approach.

We start with some background on induced representations.
If $\tau\in\hat G$ then ``the representation $\Ind_{x,S_x}^G(\tau|)$ of $C_0(X)\rtimes G$ induced from $\tau|$ on $S_x$'' is irreducible by \cite[Lemma~4.14]{W2}.  By \cite[Proposition~4.2]{W2} $\Ind_{x,S_x}^G(\tau|)$ is unitarily equivalent to $\Ind(x,\tau):=M_x\rtimes V_\tau$ on $L^2(G/S_x,\nu_x)$,
where
\[(M_x(f)\xi)(\dot s) = f(s\cdot x)\xi(\dot s)\quad\text{and}
\quad(V_\tau(t)\xi)(\dot s)=\tau(t)\xi(t^{-1}\cdot \dot s)\]
for  $f\in C_0(X)$ and $\xi\in L^2(G/S_x)$.

Let $x, y\in X$ and $\tau,\sigma\in\hat G$.  Write $(x,\tau)\!\sim (y,\sigma)$ if and only if $\overline{G\cdot x}=\overline{G\cdot y}$ and $\tau|_{S_x}=\sigma|_{S_x}$. Then $\sim$ is an equivalence relation. Since $G$ is abelian, Theorem~5.3 of \cite{W2} implies that
the map $(x,\tau)\mapsto \ker \Ind(x,\tau)$ induces a homeomorphism of $(X\times \hat G)/\!\sim$ onto the primitive ideal space of $C_0(X)\rtimes G$.  The proof of  \cite[Theorem~5.3]{W2} also shows that the quotient map $X\times\hat G\to (X\times \hat G)/\!\sim$ is open.

Let $\hat\alpha$ be the dual action of $\hat G$ on $C_0(X)\rtimes G$, that is,
\[\hat\alpha_\tau(b)(s)=\tau(s)b(s)\ \text{for\ } b\in C_c(G,C_0(X))\ \text{and}\ \tau\in\hat G.\]
An easy calculation  shows that $\Ind(x,\tau)=\Ind(x,1)\circ\hat\alpha_\tau$  \cite[Lemma~5.5]{aH}.
The next two lemmas will be needed for the proof of
Proposition~\ref{prop-constant} where we will show that if $\Ind(x_n,\tau_n)\to \Ind(z,\tau)$ then $M_L (\Ind(z,\tau)  , (\Ind( x_n,\tau_n) ) )$ does not depend on $(\tau_n)$ or $\tau$.

\begin{lemma}\label{lem-sub} Let $(G,X)$ be a second-countable transformation
 group with $G$
abelian. Suppose that  a net
$(\Ind(x_\lambda,\tau_\lambda))_{\lambda\in\Lambda}$ converges to $
\Ind(z,\tau)$ in the spectrum of $C_0(X)\rtimes G$. Then there
exists a subnet
$(\Ind(x_{\lambda(\gamma)},\tau_{\lambda(\gamma)}))_{\gamma\in
\Gamma}$ and a net $(\sigma_{\gamma})_{\gamma\in\Gamma}$ in $\hat G$
such that $\sigma_{\gamma}\to \tau$ in $\hat G$,
$\Ind(x_{\lambda(\gamma)},\tau_{\lambda(\gamma)})\simeq
\Ind(x_{\lambda(\gamma)},\sigma_{\gamma})$ and
\[M_L(\Ind(z,\tau)  ,(\Ind({x_{\lambda}},\tau_{\lambda})))=
M_L(\Ind(z,\tau)  ,(\Ind({x_{\lambda(\gamma)}},\sigma_{\gamma}))).\]
\end{lemma}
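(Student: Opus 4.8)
The plan is to transfer the convergence to the parameter space $X\times\hat G$, use the openness of the quotient map there to adjust the characters $\tau_\lambda$, and control the multiplicity by first thinning the net suitably.

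I would begin with the elementary observation that, for a fixed $x$, the unitary equivalence class of $\Ind(x,\tau)$ depends only on $\tau|_{S_x}$: indeed $\Ind(x,\tau)\simeq\Ind_{x,S_x}^G(\tau|)$ by \cite[Proposition~4.2]{W2}, and the right-hand side is unchanged when $\tau|_{S_x}$ is replaced by an equal character. (Alternatively: if $\tau|_{S_x}=\sigma|_{S_x}$ then $\tau\sigma^{-1}$ is trivial on $S_x$, hence descends to $\overline{\tau\sigma^{-1}}\in\widehat{G/S_x}$, and multiplication by $\overline{\tau\sigma^{-1}}$ on $L^2(G/S_x,\nu_x)$ intertwines $M_x\rtimes V_\sigma$ with $M_x\rtimes V_\tau$; here abelianness of $G$ is used.) Consequently, replacing $\Ind(x_\lambda,\tau_\lambda)$ by $\Ind(x_\lambda,\sigma_\lambda)$ whenever $\sigma_\lambda|_{S_{x_\lambda}}=\tau_\lambda|_{S_{x_\lambda}}$ changes neither the unitary equivalence classes nor any relative multiplicity. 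Put $\pi=\Ind(z,\tau)$, $\pi_\lambda=\Ind(x_\lambda,\tau_\lambda)$ and $m=M_L(\pi,(\pi_\lambda)_\lambda)$. Since relative lower (respectively upper) multiplicity does not decrease (respectively increase) on passage to a subnet, and since $(\pi_\lambda)$ has a subnet $(\pi_{\lambda(\beta)})_\beta$ with $M_U(\pi,(\pi_{\lambda(\beta)})_\beta)=m$ (see \cite[\S2]{AaH} and \cite{AS}), I would first replace $(\pi_\lambda)$ by such a subnet; then $M_L=M_U=m$ for it, and for every one of its subnets, and after relabelling we may assume $M_L(\pi,(\pi_\lambda)_\lambda)=M_U(\pi,(\pi_\lambda)_\lambda)=m$.

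For the main step, recall that the canonical continuous surjection from the spectrum of $C_0(X)\rtimes G$ onto its primitive ideal space, followed by the inverse of the homeomorphism of \cite[Theorem~5.3]{W2}, is a continuous map carrying $\Ind(x,\tau)$ to the class $[(x,\tau)]$ in $(X\times\hat G)/\!\sim$; hence $[(x_\lambda,\tau_\lambda)]\to[(z,\tau)]$. Since the quotient map $Q\colon X\times\hat G\to(X\times\hat G)/\!\sim$ is open, $Q(X\times N)$ is an open neighbourhood of $[(z,\tau)]$ for each open neighbourhood $N$ of $\tau$ in $\hat G$, so there is $\lambda_N$ with $[(x_\lambda,\tau_\lambda)]\in Q(X\times N)$ whenever $\lambda\geq\lambda_N$. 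For such $\lambda$ there are $y\in X$ and $\sigma\in N$ with $(x_\lambda,\tau_\lambda)\sim(y,\sigma)$, and reading the definition of $\sim$ with $(x_\lambda,\tau_\lambda)$ in the first slot gives $\tau_\lambda|_{S_{x_\lambda}}=\sigma|_{S_{x_\lambda}}$. Now let $\Gamma$ be the set of pairs $\gamma=(\mu,N)$ with $N$ an open neighbourhood of $\tau$ and $\mu\in\Lambda$, $\mu\geq\lambda_N$, ordered by declaring $(\mu,N)\leq(\mu',N')$ when $\mu\leq\mu'$ and $N\supseteq N'$; then $\gamma\mapsto\lambda(\gamma):=\mu$ makes $(\pi_{\lambda(\gamma)})_{\gamma\in\Gamma}$ a subnet of $(\pi_\lambda)_{\lambda\in\Lambda}$. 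For $\gamma=(\mu,N)$ pick $\sigma_\gamma\in N$ with $(x_\mu,\tau_\mu)\sim(y,\sigma_\gamma)$ for some $y\in X$; then $\sigma_\gamma\to\tau$, $\tau_{\lambda(\gamma)}|_{S_{x_{\lambda(\gamma)}}}=\sigma_\gamma|_{S_{x_{\lambda(\gamma)}}}$, and so $\Ind(x_{\lambda(\gamma)},\tau_{\lambda(\gamma)})\simeq\Ind(x_{\lambda(\gamma)},\sigma_\gamma)$ by the first paragraph. Finally, $(\Ind(x_{\lambda(\gamma)},\tau_{\lambda(\gamma)}))_\gamma$ is a subnet of a net on which $M_L$ and $M_U$ both equal $m$, whence
\[
M_L\big(\pi,(\Ind(x_{\lambda(\gamma)},\sigma_\gamma))_\gamma\big)=M_L\big(\pi,(\Ind(x_{\lambda(\gamma)},\tau_{\lambda(\gamma)}))_\gamma\big)=m=M_L\big(\pi,(\Ind(x_\lambda,\tau_\lambda))_\lambda\big),
\]
as required.

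I expect the main obstacle to be the bookkeeping that keeps the first coordinate fixed: openness of $Q$ only yields a character $\sigma\in N$ with $(x_\lambda,\tau_\lambda)\sim(y,\sigma)$ for a possibly different point $y$, and the point is that the definition of $\sim$, applied with $x_\lambda$ itself in the first slot, already forces $\sigma|_{S_{x_\lambda}}=\tau_\lambda|_{S_{x_\lambda}}$ — exactly what the first paragraph needs. The remaining care is simply to perform the multiplicity-thinning before the character-adjustment so that the value $m$ is preserved.
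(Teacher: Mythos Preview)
Your proof is correct and follows essentially the same route as the paper: first thin to a subnet on which $M_L=M_U$ (the paper cites \cite[Proposition~2.3]{AS}), then use openness of $X\times\hat G\to (X\times\hat G)/\!\sim$ to lift convergence and replace $\tau_{\lambda(\gamma)}$ by $\sigma_\gamma\to\tau$ with $(x_{\lambda(\gamma)},\tau_{\lambda(\gamma)})\sim(x_{\lambda(\gamma)},\sigma_\gamma)$. You supply more detail than the paper does---an explicit construction of the subnet via neighbourhoods $N$ of $\tau$, and an explicit intertwiner showing $\Ind(x,\tau)\simeq\Ind(x,\sigma)$ when $\tau|_{S_x}=\sigma|_{S_x}$---but the structure and the key ingredients are the same.
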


\begin{proof}
By \cite[Proposition~2.3]{AS}
 there exists a subnet
$(\Ind({x_{\lambda(\delta)}},\tau_{\lambda(\delta)})_{\delta\in\Delta}$
such that
\begin{align}\label{eq-coincide}
M_L(\Ind(z,\tau)
,(\Ind(x_{\lambda},\tau_{\lambda})))&=M_U(\Ind(z,\tau)
,(\Ind(x_{\lambda(\delta)},\tau_{\lambda(\delta)})))\notag\\
&=M_L(\Ind(z,\tau)
,(\Ind(x_{\lambda(\delta)},\tau_{\lambda(\delta)}))).
\end{align}
Since the quotient map $X\times\hat G\to (X\times\hat G)/\!\sim$ is
open, there exist a subnet
$(x_{\lambda(\delta(\gamma))},\tau_{\lambda(\delta(\gamma))})_{\gamma\in\Gamma}$
of $(x_{\lambda(\delta)},\tau_{\lambda(\delta)})$ and a net
$(y_{\gamma},\sigma_{\gamma})$ in $X\times\hat G$ such that
$(x_{\lambda(\delta(\gamma))},\tau_{\lambda(\delta(\gamma))})
\sim(y_{\gamma},\sigma_{\gamma}) \to(z,\tau)$. Then
$(x_{\lambda(\delta(\gamma))},\tau_{\lambda(\delta(\gamma))})\sim
(x_{\lambda(\delta(\gamma))},\sigma_{\gamma})$ and so $
\Ind(x_{\lambda(\delta(\gamma))},\tau_{\lambda(\delta(\gamma))})=
\Ind(x_{\lambda(\delta(\gamma))},\sigma_{\gamma})$.
 By \eqref{eq-coincide}, passing to
the subnet
$(\Ind({x_{\lambda(\delta(\gamma))}},\tau_{\lambda(\delta(\gamma))}))$
does not change the relative $M_L$.
\end{proof}

\begin{lemma}\label{lem-helper} Let $(G,X)$ be a second-countable transformation
 group with $G$
abelian. Suppose that $(x_\lambda)_{\lambda\in\Lambda}$ and
$(\tau_\lambda)_{\lambda\in\Lambda}$ are nets in $X$ and $\hat G$
such that $\tau_\lambda\to 1$. Let $z\in X$ and $\xi$ a unit vector
in $L^2(G/S_z)$ and $k\in\P$. For each $\lambda$, let
$\{\xi_\lambda^{(i)}:1\leq i\leq k\}$ be an orthonormal set in
$L^2(G/S_z)$. For $1\leq i\leq k$,
\begin{equation}\label{eq-one}\langle\Ind(x_\lambda,1)(\cdot)\xi_\lambda^{(i)}\,,\, \xi_\lambda^{(i)}\rangle\to_\lambda \langle\Ind(z,1)(\cdot)\xi\,,\, \xi\rangle
\end{equation}
if and only if
\begin{equation}\label{eq-two}\langle\Ind(x_\lambda,\tau_\lambda)(\cdot)\xi_\lambda^{(i)}\,,\, \xi_\lambda^{(i)}\rangle\to_\lambda \langle\Ind(z,1)(\cdot)\xi\,,\, \xi\rangle
\end{equation}
\end{lemma}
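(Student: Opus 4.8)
The plan is to exploit the identity $\Ind(x,\tau)=\Ind(x,1)\circ\hat\alpha_\tau$ noted in the excerpt, together with the fact that $\hat\alpha_\tau$ is an automorphism, so that matrix coefficients of $\Ind(x_\lambda,\tau_\lambda)$ differ from those of $\Ind(x_\lambda,1)$ only by evaluating on the ``twisted'' element $\hat\alpha_{\tau_\lambda}(b)$ in place of $b$. Since $\tau_\lambda\to 1$ in $\hat G$, I expect $\hat\alpha_{\tau_\lambda}(b)\to b$ in a suitable sense, and the difference of the two matrix coefficients in \eqref{eq-one} and \eqref{eq-two} should then go to $0$ \emph{uniformly in $i$ and in $\lambda$}, which is exactly what is needed to transfer the convergence statement in either direction.

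In detail, first I would reduce to testing against $b\in C_c(G,C_0(X))$, since these are dense in $C_0(X)\rtimes G$ and all the representations in sight are contractive, so convergence of matrix coefficients on this dense set (uniformly over the relevant index set) is equivalent to convergence in the statement. Fix such a $b$ with $\supp b\subseteq K\times X$ for some compact $K\subseteq G$. For a unit vector $\eta\in L^2(G/S_z)$ — here $\eta=\xi_\lambda^{(i)}$ or $\eta=\xi$ — and using $\Ind(x,\tau)=\Ind(x,1)\circ\hat\alpha_\tau$, write
\[
\langle\Ind(x_\lambda,\tau_\lambda)(b)\eta,\eta\rangle-\langle\Ind(x_\lambda,1)(b)\eta,\eta\rangle
=\langle\Ind(x_\lambda,1)\big(\hat\alpha_{\tau_\lambda}(b)-b\big)\eta,\eta\rangle,
\]
so the absolute value is at most $\|\hat\alpha_{\tau_\lambda}(b)-b\|_{C_0(X)\rtimes G}$, which is bounded above by the $L^1$-norm $\int_G\|(\tau_\lambda(s)-1)b(s)\|_\infty\,d\mu(s)\le \mu(K)\sup_{s\in K}|\tau_\lambda(s)-1|\,\|b\|_\infty$. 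Since $\tau_\lambda\to 1$ uniformly on compacta in $\hat G$, this bound tends to $0$, and crucially it does not depend on $i$ or on the choice of unit vector. Hence, given \eqref{eq-one} for a fixed $i$, the matrix coefficients in \eqref{eq-two} converge to the same limit; and symmetrically \eqref{eq-two} implies \eqref{eq-one}. Running this for every $b$ in the dense set $C_c(G,C_0(X))$ and invoking the uniform bound $\|\Ind(x_\lambda,\tau_\lambda)(c)\|\le\|c\|$ to pass to arbitrary $c\in C_0(X)\rtimes G$ completes the argument.

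The only mild subtlety — the step I would be most careful about — is making sure the density/equicontinuity reduction is stated correctly: one needs that for contractive representations $\rho_\lambda,\rho$ and unit vectors $\eta_\lambda,\eta$, the convergence $\langle\rho_\lambda(c)\eta_\lambda,\eta_\lambda\rangle\to\langle\rho(c)\eta,\eta\rangle$ for all $c$ in a dense subalgebra implies it for all $c$, which is a routine $3\varepsilon$-argument using the uniform norm bound. Everything else is a direct computation with the dual action, and there is no real obstacle; the hypothesis $\tau_\lambda\to 1$ is used precisely to kill the twist $\hat\alpha_{\tau_\lambda}(b)-b$ in the limit.
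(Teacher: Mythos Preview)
Your proposal is correct and follows essentially the same route as the paper: both use the identity $\Ind(x,\tau)=\Ind(x,1)\circ\hat\alpha_\tau$ and bound the difference of matrix coefficients by $\|\hat\alpha_{\tau_\lambda}(b)-b\|$, which tends to $0$ since $\tau_\lambda\to 1$. The only minor difference is that the paper works directly with an arbitrary $b\in C_0(X)\rtimes G$ (implicitly using that the dual action is strongly continuous on the full crossed product), whereas you first reduce to $b\in C_c(G,C_0(X))$ and then pass to the closure by a $3\varepsilon$-argument; your extra step is harmless but unnecessary.
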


\begin{proof}
Assume \eqref{eq-one} holds and let $b\in C_0(X)\rtimes G$. Let $\hat\alpha$ be the dual action. Since
$\Ind(x_\lambda,\tau_\lambda)(b)=\Ind(x_\lambda,1)\circ\hat\alpha_{\tau_\lambda}(b)$
and $\|\Ind(x_\lambda,1)(b-\hat\alpha_{\tau_\lambda}(b))\|\leq \|
b-\hat\alpha_{\tau_\lambda}(b) \|\to 0$, \eqref{eq-two} holds.
Similarly, if \eqref{eq-two} holds then so does \eqref{eq-one} using
$\| b-\hat\alpha_{\tau^{-1}_\lambda}(b) \|\to 0$.
\end{proof}

Recall that if $A$ is a $C^*$-algebra, $\pi\in\hat A$ and $\Omega$
is a net in $\hat A$, then $M_L(\pi,\Omega)>0$ if and only if
$\Omega$ converges to $\pi$ \cite[p.~206]{AS}.

\begin{prop}\label{prop-constant} Let $(G,X)$ be a second-countable transformation
 group with $G$ abelian. Suppose that $\Ind(x_n,\tau_n)\to \Ind(z,\tau)$. Then
\begin{align*}
M_L (\Ind(z,\tau)  , (\Ind( x_n,\tau_n) ) )
&=M_L (\Ind(z,1)  , (\Ind( x_n,\tau_n\tau^{-1}) ) )\\
&=M_L (\Ind(z,1)  , (\Ind( x_n,1 ) )\text{\ \ and}\\
M_U (\Ind(z,\tau)  , (\Ind( x_n,\tau_n) ) )
&=M_U (\Ind(z,1)  , (\Ind( x_n,\tau_n\tau^{-1}) ) )\\
&=M_U (\Ind(z,1)  , (\Ind( x_n,1 ) ) ).
\end{align*}
\end{prop}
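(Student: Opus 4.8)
The plan is to build everything from three ingredients: the identity $\Ind(x,\tau)=\Ind(x,1)\circ\hat\alpha_\tau$; the fact that $M_L(\pi,(\pi_\lambda))$ and $M_U(\pi,(\pi_\lambda))$ are unchanged when $\pi$ and all the $\pi_\lambda$ are composed with a fixed $*$-automorphism $\beta$ of $C_0(X)\rtimes G$ (immediate from the description of these numbers in terms of orthonormal sets and limiting vector states, since $\beta$ merely permutes the finite subsets of the algebra over which one quantifies); and Lemmas~\ref{lem-sub} and~\ref{lem-helper}. For the first equality in each of the two lines I would apply the automorphism $\hat\alpha_{\tau^{-1}}$: using $\hat\alpha_{\tau_n}\circ\hat\alpha_{\tau^{-1}}=\hat\alpha_{\tau_n\tau^{-1}}$ one gets $\Ind(z,\tau)\circ\hat\alpha_{\tau^{-1}}=\Ind(z,1)$ and $\Ind(x_n,\tau_n)\circ\hat\alpha_{\tau^{-1}}=\Ind(x_n,\tau_n\tau^{-1})$, so the invariance just mentioned yields $M_L(\Ind(z,\tau),(\Ind(x_n,\tau_n)))=M_L(\Ind(z,1),(\Ind(x_n,\tau_n\tau^{-1})))$, and likewise for $M_U$; moreover $\Ind(x_n,\tau_n\tau^{-1})\to\Ind(z,1)$, because $\hat\alpha_{\tau^{-1}}$ induces a homeomorphism of the spectrum.

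Writing $\rho_n:=\tau_n\tau^{-1}$, it then remains to prove $M_L(\Ind(z,1),(\Ind(x_n,\rho_n)))=M_L(\Ind(z,1),(\Ind(x_n,1)))$ and the same for $M_U$. If one knew $\rho_n\to1$ this would follow at once from Lemma~\ref{lem-helper} (extended, by the one-line argument in its proof using $\|\hat\alpha_{\rho_n}(b)-b\|\to0$, to off-diagonal matrix coefficients): the orthonormal $d$-sets $\{\xi_\nu^{(i)}\}$ and limit vector $\xi$ witnessing a value $d$ for one of the two nets would witness the same value for the other, the same data serving for both. The main obstacle is precisely that $\tau_n\tau^{-1}$ need not converge to $1$; this is the point where Lemma~\ref{lem-sub} enters, to replace $(\Ind(x_n,\rho_n))$, along a subnet and up to unitary equivalence, by a net $(\Ind(x_{n(\gamma)},\sigma_\gamma))$ with $\sigma_\gamma\to1$.

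Concretely, put $d:=M_L(\Ind(z,1),(\Ind(x_n,1)))$ and $d':=M_L(\Ind(z,1),(\Ind(x_n,\rho_n)))$, so $d'\geq1$ by the previous step. To show $d\geq d'$: take an arbitrary subnet $(x_\mu)$ of $(x_n)$; then $(\Ind(x_\mu,\rho_\mu))$ still converges to $\Ind(z,1)$, and Lemma~\ref{lem-sub} supplies a further subnet with $(\Ind(x_{\mu(\gamma)},\rho_{\mu(\gamma)}))\simeq(\Ind(x_{\mu(\gamma)},\sigma_\gamma))$ and $\sigma_\gamma\to1$; this $\sigma$-net still has $M_L\geq d'$, since $M_L$ is invariant under termwise unitary equivalence and monotone under passage to subnets. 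Hence the $\sigma$-net admits, along a further subnet, a unit vector $\xi$ and orthonormal $d'$-sets $\{\xi_{\gamma'}^{(i)}\}$ with $\langle\Ind(x_{\mu(\gamma')},\sigma_{\gamma'})(a)\xi_{\gamma'}^{(i)},\xi_{\gamma'}^{(j)}\rangle\to\delta_{ij}\langle\Ind(z,1)(a)\xi,\xi\rangle$; by Lemma~\ref{lem-helper} the same $\xi$ and orthonormal sets work for $(\Ind(x_{\mu(\gamma')},1))$, and $(x_{\mu(\gamma')})$ is a subnet of $(x_\mu)$. As $(x_\mu)$ was arbitrary, the characterization of $M_L$ gives $d\geq d'$. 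For the reverse inequality we may assume $d\geq1$, so that $\Ind(x_n,1)\to\Ind(z,1)$; then, given an arbitrary subnet $(x_\mu)$ of $(x_n)$, one passes via Lemma~\ref{lem-sub} from $(\Ind(x_\mu,\rho_\mu))$ to the associated $\sigma$-net, observes that the corresponding net $(\Ind(x_{\mu(\gamma)},1))$ is a subnet of $(\Ind(x_n,1))$ and hence, since $M_L(\Ind(z,1),(\Ind(x_n,1)))=d$, admits along a further subnet a unit vector and orthonormal $d$-sets realizing the limit vector state, transports these to the $\sigma$-net by Lemma~\ref{lem-helper} and then to $(\Ind(x_{\mu(\gamma')},\rho_{\mu(\gamma')}))$ by the unitary equivalence; this yields $d'\geq d$. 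Thus $d=d'$, and the identical argument with the analogous (and simpler) characterization of $M_U$ proves the $M_U$ equalities. The genuine difficulty throughout is the non-convergence of $\tau_n\tau^{-1}$, which forces the detour through Lemma~\ref{lem-sub} and obliges one to carry out the bookkeeping via the ``every subnet has a further subnet with suitable orthonormal sets'' description of $M_L$, since $M_L$ is only monotone, not invariant, under passing to subnets.
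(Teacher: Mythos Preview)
Your proposal is correct and follows essentially the same route as the paper: the first equality in each line is obtained by composing with the automorphism $\hat\alpha_{\tau^{-1}}$, and the second is proved by showing both inequalities via the characterisation of $M_L$ in \cite[Lemma~5.2(ii)]{ASS}, using Lemma~\ref{lem-sub} to pass to a subnet along which the characters converge to $1$ and then Lemma~\ref{lem-helper} to transfer the $k$-vector property between the $\sigma$-net and the $1$-net. The only minor deviation is that you invoke off-diagonal matrix coefficients $\langle\pi_\lambda(a)\xi_\lambda^{(i)},\xi_\lambda^{(j)}\rangle\to\delta_{ij}\phi(a)$, whereas the $k$-vector property of \cite[Lemma~5.2]{ASS} (and hence the paper's argument) needs only the diagonal case already covered by Lemma~\ref{lem-helper}; your one-line extension is correct but unnecessary.
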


\begin{proof}
 We have
 \[M_L\big(\Ind(z,\tau)  ,\big(\Ind( x_n,\tau_n)\big)\big)
=M_L(\Ind(z,1)\circ\hat\alpha_\tau  , (\Ind(
x_n,\tau_n\tau^{-1})\circ\hat\alpha_{\tau})).\]
Since $\hat\alpha_\tau$
is an automorphism of $C_0(X)\rtimes G$, we obtain the first
equality.

To show the second equality, fix a pure state $\phi$ associated with $\Ind(z,1)$. Let $\sigma_n=\tau_n\tau^{-1}.$ We will use the criterion of \cite[Lemma~5.2(ii)]{ASS} to prove that
\begin{equation}\label{eq-toprove}M_L(\Ind(z,1)  ,(\Ind({x_n},\sigma_n)))
=M_L(\Ind(z,1)  ,(\Ind({x_n},1)).
\end{equation}

 First, suppose that $M_L\big(\Ind(z,1)  ,\big(\Ind({x_n},1\big)\big)\geq k$ for some $k\in\P$. We will show that $M_L\big(\Ind(z,1),\big(\Ind({x_n},\sigma_n)\big)\big)\geq k$ as well. Since $\Ind(x_n,1)\to \Ind(z,1)$ we have  $k\geq 1$.   Let  $\Omega=(\Ind(x_{n_\lambda},\sigma_{n_\lambda}))_{\lambda\in\Lambda}$ be
 any subnet of $(\Ind(x_n,\sigma_n))_n$.  By passing to a further subnet and
 relabeling we may assume that $\sigma_{n_\lambda}\to 1$ in $\hat G$
 (see Lemma~\ref{lem-sub}).

Consider the subnet
$\Omega':=(\Ind(x_{n_\lambda},1))_{\lambda\in\Lambda}$ of
$(\Ind(x_n,1))_n$.
By \cite[Lemma~5.2]{ASS} there exists a further subnet
$(\Ind(x_{n_{\lambda(\mu)}},1))_{\mu\in\Upsilon}$ which has the
$k$-vector property  for $\phi$.  So by Lemma~\ref{lem-helper},
$(\Ind(x_{n_{\lambda(\mu)}},\sigma_{n_{\lambda(\mu)}}))_{\mu\in\Upsilon}$
has the $k$-vector property as well and hence $M_L\big(\Ind(z,1)
,\big(\Ind({x_n},\sigma_n)\big)\big)\geq k$. It follows that
$M_L\big(\Ind(z,1)  ,\big(\Ind({x_n},\sigma_n)\big)\big)\geq
M_L\big(\Ind(z,1)  ,\big(\Ind({x_n},1\big)\big)$.

Second, note that  $\Ind({x_n},\sigma_n)\to\Ind(z,1)$. Suppose that $M_L\big(\Ind(z,1)  ,\big(\Ind({x_n},\sigma_n)\big)\big)\geq k$ for some $k\geq 1$. Let $\Omega=(\Ind(x_{n_\lambda},1))_{\lambda\in\Lambda}$ be any subnet of $(\Ind(x_n,1))_n$. Consider $\Omega':=(\Ind(x_{n_\lambda},\sigma_{n_\lambda}))_{\lambda\in\Lambda}$; by passing to a further subnet and relabeling we may assume that $\sigma_{n_\lambda}\to 1$.  By \cite[Lemma~5.2]{ASS}, there exists a subnet $(\Ind(x_{n_{\lambda(\mu)}},\sigma_{n_{\lambda(\mu)}}))_{\mu\in\Upsilon}$  of $\Omega'$ with the $k$-vector property for $\phi$. So by Lemma~\ref{lem-helper}, $(\Ind(x_{n_{\lambda(\mu)}},1))_{\mu\in\Upsilon}$ has the $k$-vector property as well and hence $M_L\big(\Ind(z,1)  ,\big(\Ind({x_n},1)\big)\big)\geq k$. Thus \eqref{eq-toprove} holds.

The proof of the corresponding statement about the upper multiplicities is similar,
using the criterion of  \cite[Lemma~5.2(i)]{ASS} .
\end{proof}

The next theorem is an analogue of \cite[Theorem~2.3]{AD} and
\cite[Theorem~2.1]{AaH2}.  In Corollary~\ref{cor-lowermult}, we
will show that the three hypotheses of Theorem~\ref{thm-lowermult} can be
satisfied under a suitable assumption of $k$-times convergence.

\begin{thm}\label{thm-lowermult}
Let $(G,X)$ be a second-countable transformation group with $G$
abelian. Let $k\in\P$, $z\in X$. Let $(x_n)_n$ be a sequence in $X$
such that $S_{x_n}\to S_z$.  Suppose that there exists a compact,
symmetric neighbourhood $W$ of the identity in $G$ and $k$ sequences
$(t_n^{(1)})_n,(t_n^{(2)})_n,\dots,(t_n^{(k)})_n$ in $G$ such that
\begin{enumerate}
\item\label{thm-lowermult-1} $t_n^{(j)}\cdot x_n\to z$ for $1\leq j\leq k$;
\item\label{thm-lowermult-2} there exists $N$ such that $n\geq N$ implies
$Wt_n^{(j)}S_{x_n}\cap Wt_n^{(i)}S_{x_n}=\emptyset$ for $1\leq i<j\leq k$;
\item\label{thm-lowermult-3} $\mu\big(WS_z\setminus(\interior W)S_z\big)=0$.
\end{enumerate}
If  $\Ind(x_n,\tau_n)\to \Ind(z,\tau)$  as $n\to\infty$ then $M_L(\Ind(z,\tau)  ,(\Ind(x_n,\tau_n)))\geq k$.
\end{thm}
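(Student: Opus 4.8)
By Proposition~\ref{prop-constant} it suffices to prove that $M_L(\Ind(z,1),(\Ind(x_n,1)))\geq k$, so I would first discard the characters $\tau_n,\tau$ entirely and work with $\Ind(z,1)=M_z\rtimes V_1$ on $L^2(G/S_z,\nu_z)$ and $\Ind(x_n,1)=M_{x_n}\rtimes V_1$ on $L^2(G/S_{x_n},\nu_{x_n})$. The plan is to invoke the vector-state characterisation of lower multiplicity (as in \cite{AS,ASS} and the summary in \cite[\S2]{AaH}): it is enough to produce a unit vector $\xi\in L^2(G/S_z)$, a pure state $\phi$ associated with $\Ind(z,1)$ via $\xi$, and for each $n$ an orthonormal set $\{\xi_n^{(1)},\dots,\xi_n^{(k)}\}\subset L^2(G/S_{x_n})$ such that
\[
\langle\Ind(x_n,1)(b)\,\xi_n^{(i)}\,,\,\xi_n^{(i)}\rangle\ \longrightarrow\ \langle\Ind(z,1)(b)\,\xi\,,\,\xi\rangle
\]
for every $b\in C_c(G,C_0(X))$ and every $i$. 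Because finite linear combinations of functions of the form $b(s)(y)=g(s)h(y)$ with $g\in C_c(G)$, $h\in C_0(X)$ are dense, it is enough to check the convergence on such elementary tensors, where the matrix coefficient unwinds to an integral over $G/S_{x_n}$ of $g$ against $h\circ\phi_{x_n}$ paired with translates of $\xi_n^{(i)}$.

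The vectors are built by the familiar translation-and-cutoff device. Fix a compact symmetric neighbourhood $W$ of $e$ as in the hypotheses. Using the continuous choice of Haar measures and Lemma~\ref{lem-pointwise} together with hypothesis~\eqref{thm-lowermult-3} ($\mu(WS_z\setminus(\interior W)S_z)=0$), I would produce a function $\eta\in C_c(G)^+$, supported in $\interior W$, with $\int_{S_z}\eta(t)^2\,d\alpha_z(t)=1$ and hence, by continuity of $H\mapsto\int_H\eta^2\,d\alpha_H$, with $\int_{S_{x_n}}\eta(t)^2\,d\alpha_{x_n}(t)\to 1$; normalising, let $\eta_n=c_n\eta$ so that the $S_{x_n}$-integral is exactly $1$. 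Pushing $\eta_n$ through the (right-invariant) quotient measure via \eqref{quotientmeasure1} gives a unit vector $\xi_0^{(n)}\in L^2(G/S_{x_n})$ concentrated near the identity coset; then set $\xi_n^{(i)}=V_1(t_n^{(i)})\xi_0^{(n)}$, i.e.\ translate by the group elements from the hypothesis. Hypothesis~\eqref{thm-lowermult-2} says the supports $Wt_n^{(j)}S_{x_n}$ are pairwise disjoint for $n\geq N$, which (after pushing to $G/S_{x_n}$) makes $\{\xi_n^{(1)},\dots,\xi_n^{(k)}\}$ orthonormal; for $n<N$ one can take any orthonormal set. On the limit side, $\xi$ is the analogous unit vector in $L^2(G/S_z)$ built from $\eta$ via $\nu_z$, and $\phi$ the associated pure state.

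For the convergence of the matrix coefficients I would compute, for an elementary tensor $b$, the coefficient $\langle\Ind(x_n,1)(b)\xi_n^{(i)},\xi_n^{(i)}\rangle$ explicitly: after a change of variable by $t_n^{(i)}$ it becomes an integral over $G/S_{x_n}$ against $\eta_n$-factors of the value $b(s)\big((s t_n^{(i)})\cdot x_n\big)$, which for $s$ ranging over the shrinking support is close to $b(s)(z)$ precisely because $t_n^{(i)}\cdot x_n\to z$ (hypothesis~\eqref{thm-lowermult-1}) and the action is continuous; uniform continuity of $b$ on a compact set and the fact that the $\eta_n$-measures converge (Lemma~\ref{lem-supmeasures2} and the pointwise statement of Lemma~\ref{lem-pointwise}) then force convergence to the corresponding integral against $\eta$ over $G/S_z$, which is exactly $\langle\Ind(z,1)(b)\xi,\xi\rangle$. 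I expect the main obstacle to be precisely the bookkeeping around the varying measures: the domains $L^2(G/S_{x_n},\nu_{x_n})$ change with $n$, and one must use \eqref{quotientmeasure1}, \eqref{quotientmeasure2} and the continuity of the Haar choice to compare integrals over $G/S_{x_n}$ with integrals over $G$ and then with integrals over $G/S_z$ — this is where hypothesis~\eqref{thm-lowermult-3} and Lemma~\ref{lem-pointwise} are indispensable, since without the null-boundary condition $\chi_{WS_{x_n}}$ need not converge pointwise a.e.\ to $\chi_{WS_z}$ and the limit of the $\eta_n$-normalisation could fail to match. Once the matrix-coefficient convergence is in hand, the vector-state criterion yields $M_L(\Ind(z,1),(\Ind(x_n,1)))\geq k$, and Proposition~\ref{prop-constant} upgrades this to the stated conclusion for $(\Ind(x_n,\tau_n))$.
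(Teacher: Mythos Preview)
Your overall architecture is exactly the paper's: reduce to $\tau=\tau_n=1$ via Proposition~\ref{prop-constant}, build $k$ translated unit vectors in each $L^2(G/S_{x_n})$ whose supports are disjoint by hypothesis~\eqref{thm-lowermult-2}, check matrix-coefficient convergence on elementary tensors $h\otimes g$, and apply the $k$-vector criterion of \cite[Lemma~5.2(ii)]{ASS}.

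The gap is in your vector construction. The normalisation $\int_{S_z}\eta(t)^2\,d\alpha_z(t)=1$ does not produce a unit vector in $L^2(G/S_z,\nu_z)$: formula~\eqref{quotientmeasure1} relates $\int_G f\,d\mu$ to an iterated integral, not $L^2$-norms on the quotient, and there is no natural map ``push $\eta\in C_c(G)$ through the quotient measure'' that takes your $\eta_n$ to a unit vector with support controlled by $W$. Moreover, if you really work with a continuous $\eta$ supported in $\interior W$, then the boundary condition~\eqref{thm-lowermult-3} and the pointwise statement of Lemma~\ref{lem-pointwise} are no longer the relevant tools, so your explanation of why they are ``indispensable'' does not match your construction.

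The paper instead takes the vectors to be normalised characteristic functions: $\eta=\nu_z(q_z(WS_z))^{-1/2}\chi_{q_z(WS_z)}$ and $\eta_n^{(j)}=\nu_{x_n}(q_{x_n}(WS_{x_n}))^{-1/2}\chi_{q_{x_n}(WS_{x_n})}(\,\cdot\, t_n^{(j)-1})$. This is where hypothesis~\eqref{thm-lowermult-3} genuinely enters, via Lemma~\ref{lem-pointwise}, to get $\chi_{WS_{x_n}}\to\chi_{WS_z}$ a.e. The ``bookkeeping around varying measures'' that you correctly flag as the main obstacle is handled not by \eqref{quotientmeasure1} alone but by choosing a cut-down Bruhat approximate cross-section $b\in C_c(G\times\Sigma)$ with $\int_H b(rt,H)\,d\alpha_H(t)=1$ for $r\in WH$; this converts both the normalising constants $\nu_{x_n}(q_{x_n}(WS_{x_n}))$ and the matrix coefficients into integrals over $G$ with integrands of the form $\chi_{WS_{x_n}}(r)\,b(r,S_{x_n})\,(\cdots)$, to which dominated convergence applies directly. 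You should replace your $\eta$-construction with this one.
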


\begin{proof} We will prove that $M_L(\Ind(z,1),(\Ind(x_n,1)))\geq k$; since $G$ is abelian this suffices by Proposition~\ref{prop-constant}.

For $x\in X$, let $q_x:G\to G/S_x$ be the quotient map and for $s\in G$ set
\[
\eta(q_z(s)):=\nu_z\big(q_z(WS_z)\big)^{-1/2}\Chi_{q_z(WS_z)}(q_z(s));\]
for $n\geq 1$ and $1\leq j\leq k$ set
\[
\eta_n^{(j)}(q_{x_n}(s)):=\nu_{x_n}\big(q_{x_n}(WS_{x_n})\big)^{-1/2}\Chi_{q_{x_n}(WS_{x_n})}
(q_{x_n}(st_n^{(j)-1})).
\]
Then $\eta$ is a unit vector in $L^2(G/S_z,\nu_z)$, and, for each
$n\geq 1$ and $1\leq j\leq k$,  $\eta_n^{(j)}$ is a  unit vector in
$ L^2(G/S_{x_n},\nu_{x_n})$. Note that $st_n^{(j)-1}\in WS_{x_n}$
and $st_n^{(i)-1}\in WS_{x_n}$ if and only if $s\in
Wt_n^{(j)}S_{x_n}\cap Wt_n^{(i)}S_{x_n}$. But if $n\geq N$ and
$i\neq j$ then  $Wt_n^{(j)}S_{x_n}\cap Wt_n^{(i)}S_{x_n}=\emptyset$ by hypothesis \eqref{thm-lowermult-2},
and hence $\langle \eta_n^{(j)}\,,\, \eta_n^{(i)}\rangle =0$.

Fix $f\in C_c(G\times X)\subset C_0(X)\rtimes G$  of the form
$f(s,x)=h(s)g(x)$ where $h\in C_c(G)$ and $g\in C_c(X)$. We will compute
 \[\Psi_n^{(j)}(f):=\langle \big( \Ind(x_n,1)f\big)\eta_n^{(j)}\,,\, \eta_n^{(j)}\rangle\]
for $1\leq j\leq k$.
 To simplify the formulas, we write  $q_n$ for
 $q_{x_n}$, $\Chi_n$ for $\Chi_{q_n(WS_{x_n})}$ and $C_n$ for
 $\nu_{x_n}(q_n(WS_{x_n}))^{-1}$.
 We compute using the formulas from \cite[p.~1216]{aH}:
\begin{align*}
\Psi_n^{(j)}(f)&=\int_{G/S_{x_n}}\big((\Ind(x_n,1)f)\eta_n^{(j)}
\big)(\dot v)\overline{ \eta_n^{(j)}(\dot v)}\, d\nu_{x_n}(\dot v)\notag\\
&=C_n\int_{G/S_{x_n}}\int_G h(vu^{-1})g(v\cdot x_n)
\Chi_n(q_n(ut_n^{(j)-1}))\, d\mu(u)\Chi_n (q_n(vt_n^{(j)-1}))\,
d\nu_{x_n}(\dot v)\notag\
\intertext{which, via changes of variables
$s=ut_n^{(j)-1}$ and $\dot r=q_n(vt_n^{(j)-1})$, is}
&=C_n \int_{G/S_{x_n}}\int_G h(rs^{-1})g(rt_n^{(j)}\cdot x_n)\Chi_n(\dot s)\Chi_n(\dot r)\,d\mu(s)\, d\nu_{x_n}(\dot r)\notag\\
&=C_n \int_{G/S_{x_n}}\int_G h(u)g(rt_n^{(j)}\cdot x_n)\Chi_n(q_n(u^{-1}r))\Chi_n(\dot r)\,d\mu(u)\, \,d \nu_{x_n}(\dot r)\notag\\
&=C_n\int_{\dot r\in q_n(WS_{x_n})}g(rt_n^{(j)}\cdot x_n)\left(\int_{u\in rWS_{x_n}} h(u)\, d\mu(u)\right)\,d \nu_{x_n}(\dot r).\notag
\end{align*}
For each $n\geq 1$,
\[
F_n^j(r)=g(rt_n^{(j)}\cdot x_n)\int_{u\in rWS_{x_n}} h(u)\, d\mu(u)
\]
is the formula for a function $F_n^j$ which is constant on cosets of
$S_{x_n}$, and the above calculation shows
\begin{equation}
\Psi_n^{(j)}(f)=C_n\int_{\dot r\in q_n(WS_{x_n})}F_n^j(r)\,d \nu_{x_n}(\dot r).\label{eq-long}
\end{equation}
Note, for later use, that the $F_n^j$ are uniformly
bounded by $\Vert g\Vert_{\infty}\Vert h\Vert_1$.

Since $S_{x_n}\to S_z$, it follows from Lemma~\ref{lem-pointwise} and hypothesis \eqref{thm-lowermult-3} that $\Chi_{WS_{x_n}}\to \Chi_{WS_z}$ almost everywhere. By the invariance of $\mu$, for each
$r\in G$ we have $\Chi_{rWS_{x_n}}\to \Chi_{rWS_z}$ almost everywhere.
 By hypothesis \eqref{thm-lowermult-1} and the continuity of $g$, for all $1\leq j\leq k$
and $r\in G$,  $g(rt_n^{(j)}\cdot x_n)\to g(r\cdot z)$ as
$n\to\infty$. Since $h\in L^1(G,\mu)$ it follows that, for all
$r\in G$,
\begin{equation}\label{eq-F}
F_n^j(r)\to g(r\cdot z)\int_{u\in rWS_{z}}h(u)\, d\mu(u)
\end{equation}
as $n\to\infty$.

 By \cite[Proposition~2.18(ii)]{W} or \cite[Proposition~H.17(a)]{tfb^2} we can choose a ``cut-down generalised Bruhat approximate cross-section'' $b\in C_c(G\times \Sigma)$ such that
\[
\int_{H}b(rt,H)\, d\alpha_H(t)=1
\]
for $r\in WH$.  Thus
\begin{align*}
\eqref{eq-long}
&=C_n\int_{G/S_{x_n}}\chi_{WS_{x_n}}(r)F_n^j(r)\,d \nu_{x_n}(\dot r)\\
&=C_n\int_{G/S_{x_n}}\chi_{WS_{x_n}}(r)F_n^j(r)\int_{S_{x_n}}b(rt,S_{x_n})\,
d\alpha_{x_n}(t)\,d \nu_{x_n}(\dot r)\\
&=C_n\int_{G/S_{x_n}}\int_{S_{x_n}}\chi_{WS_{x_n}}(rt)F_n^j(rt)b(rt,S_{x_n})\,
d\alpha_{x_n}(t)\,d \nu_{x_n}(\dot r)\\
&=C_n\int_G\Chi_{WS_{x_n}}(r)F_n^j(r)b(r,S_{x_n})\, d\mu(r)
\end{align*}
using \eqref{quotientmeasure2}.
Similarly,
\begin{align*}
C_n^{-1}&=\nu_{x_n}(q_n(WS_{x_n}))=\int_{G/S_{x_n}}\Chi_{WS_{x_n}}(r) \int_{S_{x_n}}b(rt,S_{x_n})\, d\alpha_{x_n}(t)\, d\nu_{x_n}(\dot r)\\
&=\int_{G/S_{x_n}}\int_{S_{x_n}} \Chi_{WS_{x_n}}(rt)b(rt,S_{x_n})\,
d\alpha_{x_n}(t)\, d\nu_{x_n}(\dot r)=\int_G
\Chi_{WS_{x_n}}(r)b(r,S_{x_n})\,
 d\mu(r).
\end{align*}
So we have shown that $\Psi_n^{(j)}(f)$ is the product of
\begin{equation}
C_n=\left(\int_G  \Chi_{WS_{x_n}}(r)b(r,S_{x_n})\,
d\mu(r)\right)^{-1}\label{eq-1}
\end{equation}
and
\begin{equation}
\int_G\Chi_{WS_{x_n}}(r)F_n^j(r)b(r,S_{x_n})\,
d\mu(r)\label{eq-3}.
\end{equation}
An almost identical calculation shows that
\[\Psi(f):=\langle \big( \Ind(z,1)f\big)\eta\,,\, \eta\rangle\] is the product of
\begin{equation}
C:=\left(\int_G  \Chi_{WS_z}(r)b(r,S_z)\,
d\mu(r)\right)^{-1}\label{eq-4}
\end{equation}
and
\begin{equation}
\int_G \left(\Chi_{WS_z}(r)g(r\cdot z)b(r,S_z)\int_{u\in
rWS_z}h(u)\, d\mu(u)\right)\, d\mu(r) \label{eq-6}.
\end{equation}

We have noted above that  $\Chi_{WS_{x_n}}\to \Chi_{WS_z}$ almost everywhere. Since $b$ is continuous on $G\times\Sigma$, $b(\cdot, S_{x_n})\to b(\cdot, S_z)$. So the integrand in \eqref{eq-1}
converges pointwise almost everywhere to the integrand in \eqref{eq-4}, and hence
$C_n$ converges to $C$ by the Dominated Convergence Theorem. (For an
$L^1$-dominant, let $L$ be the compact subset of $G$ obtained by
projecting the support of $b$ and then take $\Vert
b\Vert_{\infty}\Chi_L$.) Using \eqref{eq-F},
the integrand in \eqref{eq-3} converges pointwise almost everywhere to the
integrand in \eqref{eq-6}, and hence it follows from the Dominated
Convergence Theorem that $\Psi_n^{(j)}(f)\to \Psi(f)$ for $1\leq
j\leq k$. (For an $L^1$-dominant, we may take $\Vert
g\Vert_{\infty}\Vert h\Vert_1\Vert
b\Vert_{\infty}\Chi_L$.) Since the linear span of
such $f$ is norm-dense in $C_0(X)\rtimes G$ and the $\Psi_n^{(j)}$
and $\Psi$ have norm one, it follows that $\Psi_n^{(j)}\to \Psi$ in
the weak$^*$-topology for $1\leq j\leq k$.

 Suppose that $({\rm
Ind}(\epsilon_{x_{n_\lambda}},1))_{\lambda\in\Lambda}$ is a subnet
of $({\rm Ind}(x_n,1))$.  Then there exists $\lambda_0\in
\Lambda$ such that $n_{\lambda}\geq N$ whenever $\lambda\geq
\lambda_0$.   So the calculations above give, for each $\lambda\geq
\lambda_0$, $k$ mutually orthogonal pure states
$\Psi_{n_\lambda}^1,\dots,\Psi_{n_\lambda}^k$ associated with ${\rm
Ind}(\epsilon_{x_{n_\lambda}},1)$ such that
$\lim_\lambda\Psi_{n_\lambda}^j=\Psi$ for $1\leq j\leq k$. It now
follows from \cite[Lemma 5.2(ii)]{ASS} that
$M_L\big(\Ind(z,1)
,\big(\Ind(x_n,1)\big)\big)\geq k$.
\end{proof}

Before moving to Corollary~\ref{cor-lowermult}, we need to consider
the measure-theoretic hypothesis (3) in Theorem~\ref{thm-lowermult}.

\begin{lemma}\label{lemma_measureW} Let $G$ be a locally compact,
abelian group with Haar measure $\mu$ and let $S$ be a closed
subgroup of $G$ such that $S$ is either countable or compact. Then
there exists a compact symmetric neighbourhood $W$ of the
identity in $G$ such that $\mu\big(WS\setminus(\interior
W)S\big)=0$.
\end{lemma}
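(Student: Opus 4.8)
The plan is to reduce both cases to one elementary fact: \emph{every locally compact abelian group $H$ with Haar measure $\lambda$ possesses a compact symmetric neighbourhood $N$ of the identity with $\lambda(\partial N)=0$}. To prove this I would fix a relatively compact open neighbourhood $V$ of the identity and, using Urysohn's lemma, a function $f\in C_c(H)$ with $0\le f\le1$, $f(e)=1$ and $\supp f\subseteq V$. For $c\in(0,1)$ the open set $U_c=\{x\in H: f(x)>c\}$ is a neighbourhood of $e$ with $\overline{U_c}$ compact and $\partial U_c\subseteq\{f=c\}$. The level sets $\{f=c\}$ ($0<c<1$) are pairwise disjoint Borel subsets of the finite-measure set $\overline V$, so only countably many of them have positive $\lambda$-measure; choosing $c$ with $\lambda(\{f=c\})=0$ and setting $N=\overline{U_c}\cap(\overline{U_c})^{-1}$ produces a compact symmetric neighbourhood of $e$ whose boundary lies in $\{f=c\}\cup(\{f=c\})^{-1}$. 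The latter is $\lambda$-null because the push-forward of $\lambda$ under inversion is again a Haar measure on $H$, hence a scalar multiple of $\lambda$.

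For the case where $S$ is countable, I would apply the above with $H=G$ and $\lambda=\mu$ to obtain such a $W$. Since $W$ is closed, $W\setminus\interior W=\partial W$, so $WS\setminus(\interior W)S\subseteq(\partial W)S=\bigcup_{s\in S}(\partial W)s$; by translation-invariance of $\mu$ and countable subadditivity this union is $\mu$-null. Only the countability of $S$ is used here, not that it is discrete.

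For the case where $S$ is compact, I would work in the quotient. Then $G/S$ is a locally compact abelian group, $q\colon G\to G/S$ is a continuous open homomorphism, and — because $S$ is compact — $q$ is proper: a compact subset of $G/S$ is covered by finitely many sets $q(U_i)$ with $U_i$ relatively compact in $G$, so its preimage lies in $\bigcup_i U_iS$, which is relatively compact. Applying the first fact to $G/S$ with its invariant measure $\nu_S$, I would get (taking $\Omega=U_c\cap U_c^{-1}$ in that construction) an \emph{open} symmetric neighbourhood $\Omega$ of the identity in $G/S$ with $\overline\Omega$ compact and $\nu_S(\overline\Omega\setminus\Omega)=0$. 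Put $W=q^{-1}(\overline\Omega)$. This $W$ is compact (by properness of $q$), symmetric (as $\Omega$ is), a neighbourhood of $e$ (it contains the open set $q^{-1}(\Omega)\ni e$), and $S$-saturated, so $WS=W=q^{-1}(\overline\Omega)$ while $(\interior W)S\supseteq q^{-1}(\Omega)S=q^{-1}(\Omega)$. Hence $WS\setminus(\interior W)S\subseteq q^{-1}(\overline\Omega\setminus\Omega)$, and Weil's formula \eqref{quotientmeasure2} — used with $E=q^{-1}(\overline\Omega\setminus\Omega)$ and a cutoff $f\in C_c(G)$ equal to $1$ on the compact set $q^{-1}(\overline\Omega)$ — gives $\mu\big(q^{-1}(\overline\Omega\setminus\Omega)\big)=\alpha_S(S)\,\nu_S(\overline\Omega\setminus\Omega)=0$ (the factor $\alpha_S(S)$ being finite since $S$ is compact).

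The main obstacle is the compact case. For a general compact symmetric $W$ the set $WS\setminus(\interior W)S$ need not be small — it contains $q^{-1}\big(q(\partial W)\big)$, not merely $\partial W$ — so one cannot simply repeat the countable-case argument, and one really must manufacture $W$ as the preimage of a set in $G/S$ with null boundary; this is where properness of $q$ (for compactness of $W$) and Weil's formula (to transport the null set back to $G$) enter. The only other ingredient, the existence of a compact neighbourhood with $\lambda$-null topological boundary, is elementary but essential, and the Urysohn level-set device above is the natural way to obtain it.
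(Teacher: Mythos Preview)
Your proof is correct but proceeds quite differently from the paper's. The paper invokes the structure theorem for locally compact abelian groups to write $G=H\times\R^k$ with $H$ containing a compact open subgroup $K$, and takes the explicit choice $W=K\times B$ where $B$ is the closed unit ball of $\R^k$. The countable case then goes as yours does, while for compact $S$ the paper observes that the projection of $S$ to $\R^k$ is a compact subgroup of $\R^k$, hence trivial, so $S=T\times\{0\}$ and $WS\setminus(\interior W)S=KT\times(B\setminus\interior B)$, which has measure zero directly.

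Your argument trades the structure theorem for two softer ingredients: the Urysohn level-set trick producing a compact neighbourhood with null boundary in \emph{any} locally compact group, and (in the compact case) passage to the quotient $G/S$ together with properness of $q$ and Weil's formula to transport the null set back. This is more elementary in the sense that it avoids any global structural input, and it is more intrinsic: the null-boundary lemma does not even need $G$ to be abelian, and your compact-case argument uses abelianness only to ensure $G/S$ is a group. The paper's approach, by contrast, yields an explicit and uniform choice of $W$ (independent of $S$ in the countable case), and handles the compact case by a direct product computation rather than by any transport of measure. Both are short; yours is arguably the more portable.
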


\begin{proof}
We may assume that $G=H\times\R^k$, for some $k\geq 0$, where $H$ is
a locally compact abelian group containing a compact open subgroup
$K$ \cite[Theorem~4.2.1]{DE}. If $k=0$, we may simply take $W=K$.
Assuming that $k\geq1$, let $W=K\times B$ where $B$ is the closed
unit ball of $\R^k$. Then $\interior W= K\times \interior B$ and
$W\setminus \interior W=K\times(B\setminus \interior B)$ which has
$\mu$-measure zero since the Lebesgue measure of $B\setminus
\interior B$ is zero.

Suppose that $S$ is countable. Then $WS\setminus(\interior
W)S\subseteq (W\setminus(\interior W))S$ and the right-hand side has
$\mu$-measure zero by the invariance of $\mu$ and the countability
of $S$.

Suppose, instead, that $S$ is compact. Then the image of $S$ under
the projection $p:G=H\times \R^k\to\R^k$ is a compact subgroup of
$\R^k$ and hence is $\{0\}$. Thus $S=T\times\{0\}$ where $T$ is a
subgroup of $H$. Then $WS\setminus (\interior
W)S=KT\times(B\setminus\interior B)$ which
   has $\mu$-measure zero because $B\setminus\interior B$ has zero Lebesgue measure.
\end{proof}

A more complicated argument shows that if $S$ is a closed subgroup
of a second-countable, locally compact, abelian group $G=H\times\R^k$
such that the image  $p(S)$ of the projection of $S$ into $\R^k$ is closed,  then there exists a compact
symmetric neighbourhood $W$ of the identity in $G$ such that
$\mu\big(WS\setminus(\interior W)S\big)=0$. We omit the details as
we will not use that result here.

In several places we assume that $S_z$ is compact (for example in Proposition~\ref{prop-tsoc2}
and \S\S\ref{measure} and \ref{main}), and so we restrict to this case in the following result.

\begin{cor}\label{cor-lowermult}
Let $(G,X)$ be a second-countable transformation group with $G$
abelian. Let $z\in X$ with $S_z$ compact. Let $k\in\P$ and let $(x_n)_n$ be a sequence in $X$ converging $k$-times in $X/G$ to $z$ such that $S_{x_n}\to S_z$.
If  $\Ind(x_n,\tau_n)\to \Ind(z,\tau)$  as $n\to\infty$ then $M_L\big(\Ind(z,\tau)  ,\big(\Ind(x_n,\tau_n)\big)\big)\geq k$.
\end{cor}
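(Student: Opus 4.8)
The plan is to deduce Corollary~\ref{cor-lowermult} by verifying the three hypotheses of Theorem~\ref{thm-lowermult}. The sequence $(x_n)$ converges $k$-times in $X/G$ to $z$, so by Definition~\ref{defn_k-times} there exist sequences $(t_n^{(1)})_n,\dots,(t_n^{(k)})_n$ in $G$ with $t_n^{(i)}\cdot x_n\to z$ for $1\leq i\leq k$, which is exactly hypothesis \eqref{thm-lowermult-1}; and for $1\leq i<j\leq k$ we have $t_n^{(j)}(t_n^{(i)})^{-1}S_{x_n}\to\infty$. Since $S_z$ is compact (and $G$ is second-countable, hence $S_z$ is second-countable, so in particular countable or compact), Lemma~\ref{lemma_measureW} produces a compact symmetric neighbourhood $W$ of the identity in $G$ with $\mu\big(WS_z\setminus(\interior W)S_z\big)=0$; this gives hypothesis \eqref{thm-lowermult-3}. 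It remains to produce hypothesis \eqref{thm-lowermult-2}: that $Wt_n^{(j)}S_{x_n}\cap Wt_n^{(i)}S_{x_n}=\emptyset$ eventually, for each pair $i<j$.

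For \eqref{thm-lowermult-2}, the key observation is that $G$ is abelian, so $Wt_n^{(j)}S_{x_n}\cap Wt_n^{(i)}S_{x_n}\neq\emptyset$ forces $t_n^{(j)}(t_n^{(i)})^{-1}S_{x_n}\subseteq (W^{-1}W)S_{x_n}$ — indeed, if $w_1 t_n^{(j)} s_1 = w_2 t_n^{(i)} s_2$ with $w_1,w_2\in W$, $s_1,s_2\in S_{x_n}$, then $t_n^{(j)}(t_n^{(i)})^{-1} = w_1^{-1}w_2 s_2 s_1^{-1}\in (W^{-1}W)S_{x_n}$, using commutativity to rearrange. Since $W^{-1}W$ is compact, the condition $t_n^{(j)}(t_n^{(i)})^{-1}S_{x_n}\to\infty$ says precisely that $t_n^{(j)}(t_n^{(i)})^{-1}S_{x_n}$ is eventually disjoint from $W^{-1}W$, hence eventually $t_n^{(j)}(t_n^{(i)})^{-1}S_{x_n}\not\subseteq (W^{-1}W)S_{x_n}$ — wait, that inclusion is about the whole coset, so I should instead argue: $t_n^{(j)}(t_n^{(i)})^{-1}\in (W^{-1}W)S_{x_n}$ would place the coset $t_n^{(j)}(t_n^{(i)})^{-1}S_{x_n}$ inside $(W^{-1}W)S_{x_n}$, and in particular the coset would meet the compact set $W^{-1}W$ (since $(W^{-1}W)S_{x_n} = S_{x_n}(W^{-1}W)$ and any point of the coset lying in $(W^{-1}W)S_{x_n}$ can be adjusted by an element of $S_{x_n}$... actually more carefully: if $t_n^{(j)}(t_n^{(i)})^{-1} = cs$ with $c\in W^{-1}W$, $s\in S_{x_n}$, then $c = t_n^{(j)}(t_n^{(i)})^{-1}s^{-1}\in t_n^{(j)}(t_n^{(i)})^{-1}S_{x_n}$, so the coset meets $W^{-1}W$). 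This contradicts $t_n^{(j)}(t_n^{(i)})^{-1}S_{x_n}\to\infty$ for $n$ large. Taking $N$ to work simultaneously for the finitely many pairs $i<j$ gives \eqref{thm-lowermult-2}.

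With all three hypotheses verified, Theorem~\ref{thm-lowermult} applies directly: since $\Ind(x_n,\tau_n)\to\Ind(z,\tau)$, we conclude $M_L\big(\Ind(z,\tau),\big(\Ind(x_n,\tau_n)\big)\big)\geq k$. I expect the only mildly delicate point to be the bookkeeping in the abelian-group manipulation showing that an intersection $Wt_n^{(j)}S_{x_n}\cap Wt_n^{(i)}S_{x_n}\neq\emptyset$ contradicts $t_n^{(j)}(t_n^{(i)})^{-1}S_{x_n}\to\infty$; everything else is a matter of quoting Lemma~\ref{lemma_measureW}, Definition~\ref{defn_k-times}, and Theorem~\ref{thm-lowermult}. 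One should also note at the outset that a closed subgroup of a second-countable group is itself second-countable and hence $\sigma$-compact, so that a compact $S_z$ certainly falls under the hypothesis of Lemma~\ref{lemma_measureW}; no subtlety there.
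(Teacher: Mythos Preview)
Your proposal is correct and follows essentially the same route as the paper: invoke Lemma~\ref{lemma_measureW} (using only that $S_z$ is compact) to get $W$, read off hypotheses \eqref{thm-lowermult-1} and the coset-divergence from Definition~\ref{defn_k-times}, and then observe that a nonempty intersection $Wt_n^{(j)}S_{x_n}\cap Wt_n^{(i)}S_{x_n}$ forces the coset $t_n^{(j)}(t_n^{(i)})^{-1}S_{x_n}$ to meet the compact set $W^{-1}W=W^2$, contradicting divergence. The paper's write-up is terser (it simply says $W^2$ is compact, so eventually $t_n^{(j)}t_n^{(i)-1}S_{x_n}\cap W^2=\emptyset$, hence the $W$-translates are disjoint), but the content is the same; you might streamline your exposition by dropping the exploratory ``wait'' passage and the unnecessary parenthetical about second-countability, since compactness of $S_z$ already puts you squarely in the scope of Lemma~\ref{lemma_measureW}.
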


\begin{proof}
Since $S_z$ is compact, by Lemma~\ref{lemma_measureW}
that there exists a compact symmetric neighbourhood $W$ of the
identity in $G$ such that $\mu\big(WS_z\setminus(\interior
W)S_z\big)=0$.

By the $k$-times convergence there exist $k$ sequences $(t_n^{(1)})_n,(t_n^{(2)})_n,\dots,(t_n^{(k)})_n$ in $G$ such that
$t_n^{(i)}\cdot x_n\to z$ as $n\to\infty$ for $1\leq i\leq k$ and
$t_n^{(j)}(t_n^{(i)})^{-1}S_{x_n}\to\infty$ as
$n\to\infty$ for $i\neq j$.
Since $W^2$ is compact, there exists $N\in \P$ such that $n\geq N$ implies $t_n^{(j)}t_n^{(i)-1}S_{x_n}\cap W^2=\emptyset$.   Thus $Wt_n^{(j)}S_{x_n}\cap Wt_n^{(i)}S_{x_n}=\emptyset$ for $1\leq i<j\leq k$ when $n\geq N$. The result now follows from
Theorem~\ref{thm-lowermult}.
\end{proof}

Next we use well-known transformation groups to give  examples of  transformation groups with a sequence $x_n\to z$ where the stability subgroup at $z$ is not compact (so that Corollary~\ref{cor-lowermult} does not apply), but where we can still verify that all the hypotheses of Theorem~\ref{thm-lowermult} hold.

\begin{examples} Let $(\R, Y)$ be Green's free non-proper transformation group \cite[pp.~95-96]{green1}: the space $Y$
is a closed subset of $\R^3$ and  consists of
countably many orbits, with orbit representatives
$y_0=\mathbf{0}:=(0,0,0)$ and $y_n=(2^{-2n},0,0)$ for $n=1,2,\dots$.
The action of $\R$ is given by
$s\cdot y_0=(0,s,0)$ for all $s$; and for $n\geq 1$,
\[
s\cdot y_n =
\left\{\begin{array}{ll} (2^{-2n},s,0)& \text{if}\,\,s\leq n;
\\(2^{-2n}-(\frac{s-n}{\pi})2^{-2n-1}, n\cos(s-n), n\sin(s-n) )&\text{if}\,\,n<s<n+\pi;
\\(2^{-2n-1},s-\pi-2n,0) &\text{if}\,\, s\geq n+\pi. \end{array}
\right.
\]
Green's action is free and each orbit consists of two vertical lines joined by an arc of a helix situated on a cylinder of radius $n$;
the action moves $y_n$ along the vertical lines at unit speed,
and along the arc at radial speed.

Next let $(\R,\C)$ be the non-free transformation group of
\cite[Example~5.4]{W}. Here $\R$ acts on  $\C$ by fixing
 the origin, and if $w\neq 0$
then $r\cdot w= e^{2\pi i (\frac{r}{|w|})}w$.
The orbits are  concentric circles about the origin and
the stability subgroups are
\[
S_w =
\left\{\begin{array}{ll}  \R& \text{if}\,\,w=0;
\\ |w|\Z &\text{if}\,\, w\neq 0, \end{array} \right.
\]
and vary continuously.

Now we are ready for our examples illustrating  the hypotheses of Theorem~\ref{thm-lowermult}.
\begin{enumerate}
\item  Let $G=\R\times\R$ act on $Y\times\C$ by $(s,r)\cdot(y,w)=(s\cdot y,r\cdot w)$.  Then the stability subgroups are
\[
S_{y,w} =
\left\{\begin{array}{ll}  \{0\}\times\R& \text{if}\,\,w=0;
\\ \{0\}\times|w|\Z &\text{if}\,\, w\neq 0, \end{array} \right.
\]
and vary continuously.  Let $w_n\to 0$ in $\C$ and consider the sequence $x_n=(y_n, w_n)\to (\mathbf{0},0)$ in $Y\times\C$, where $y_n$ are the orbit representatives for the action of $\R$ on $Y$ described above.  The stability subgroup at $(\mathbf{0},0)$ is the non-compact group $\{0\}\times\R$. We claim that the hypotheses of Theorem~\ref{thm-lowermult} hold with
\[
t_n^{(1)}=(0,0),\ t_n^{(2)}=(2n+\pi,0),\text{\ and\ } W=[-1,1]\times[-1,1].
\]
First,  $t_n^{(1)}\cdot (y_n, w_n)=(y_n, w_n)\to (\mathbf{0},0)$ and $t_n^{(2)}\cdot (y_n, w_n)=\big((2^{-2n-1},0,0), 0\big)\to (\mathbf{0},0)$.  Second,
\begin{gather*}W+t_n^{(1)}+S_{(y_n, w_n)}=[-1,1]\times\{t+|w_n|\Z: t\in[-1,1]\}\text{ and}\\
W+t_n^{(2)}+S_{(y_n, w_n)}=[-1+2n+\pi,1+2n+\pi]\times\{t+|w_n|\Z: t\in[-1,1]\},
\end{gather*} and hence $(W+t_n^{(1)}+S_{(y_n, w_n)})\cap (W+t_n^{(2)}+S_{(y_n, w_n)})=\emptyset$ when $n\geq 1$.  Third, $WS_{(\mathbf{0},0)}=[-1,1]\times\R$ and $(\interior W)S_{(\mathbf{0},0)}=(-1,1)\times\R$, and hence $WS_{(\mathbf{0},0)}\setminus ((\interior W)S_{(\mathbf{0},0)})$ has measure zero in $\R\times\R$.  So the three hypotheses of Theorem~\ref{thm-lowermult} hold as claimed.
\item Let $G=\R\times\R$ act on $Y$ by $(s,r)\cdot y=s\cdot y$.  Then the stability subgroup at $y$ is $\{0\}\times \R$, and since the stability subgroups are constant they vary continuously.  Consider $y_n\to \mathbf{0}$ in $Y$.  Then, for the same  reasons as in (1), the hypotheses of Theorem~\ref{thm-lowermult} hold with
\[
t_n^{(1)}=(0,0),\ t_n^{(2)}=(2n+\pi,0),\text{\ and\ } W=[-1,1]\times[-1,1].
\]
\end{enumerate}
\end{examples}

\section{Measure accumulation and upper bounds on multiplicities}\label{measure}

Throughout this section $G$ is assumed to be abelian. Here we use bounds on measure accumulation to find upper bounds on $M_L(\pi,(\pi_n))$ where $\pi_n$ and $\pi$ are induced representations of $C_0(X)\rtimes G$.  Theorem~\ref{thm-M} has the same hypothesis as Proposition~\ref{thm-Msquared} but a stronger conclusion; in particular its  proof uses that $M_L(\Ind(z,\tau),(\Ind(x_n,\tau_n)))$ is finite by  Proposition~\ref{thm-Msquared}.

\begin{prop}\label{thm-Msquared}
Suppose that $(G,X)$ is a second-countable transformation group with
$G$ abelian. Let $z\in X$  and let $(x_n)_{n\geq 1}$ be
a sequence in $X$ such that $S_{n_n}\to S_z$. Assume that $G\cdot z$ is locally closed in $X$
and that $S_z$ is compact.  Let $M\in\R$ with $M\geq 1$, and suppose that there exists an open
neighbourhood $V$ of $z$ in $X$ such that $\phi_z^{-1}(V)$ is
relatively compact and
\begin{equation}\label{eq-data}
\nu_{x_n}(q_{x_n}(\phi_{x_n}^{-1}(V)))\leq M\nu_z(q_z((\phi_z^{-1}(V))))
\end{equation}
frequently. Then $M_L(\Ind(z,\tau),(\Ind(x_n,\tau_n)))\leq\lfloor M^2\rfloor$ for any $\tau,\tau_n\in \hat G$.
\end{prop}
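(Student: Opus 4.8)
The plan is to reduce to trivial characters and then run a Hilbert--Schmidt estimate against a carefully chosen element of $C_c(G\times X)$. If $(\Ind(x_n,\tau_n))_n$ does not converge to $\Ind(z,\tau)$ then $M_L(\Ind(z,\tau),(\Ind(x_n,\tau_n)))=0\le\lfloor M^2\rfloor$ (as $M\ge1$); otherwise Proposition~\ref{prop-constant} reduces the claim to showing $M_L(\Ind(z,1),(\Ind(x_n,1)))\le\lfloor M^2\rfloor$. Throughout I would use the models $\Ind(x,1)=M_x\rtimes V_1$ on $L^2(G/S_x,\nu_x)$ of \cite[Proposition~4.2]{W2}, and write $E=q_z(\phi_z^{-1}(V))$ and $E_n=q_{x_n}(\phi_{x_n}^{-1}(V))$. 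Since $\phi_z^{-1}(V)$ is relatively compact, so is $E$, whence $0<\nu_z(E)<\infty$; thus $\eta:=\nu_z(E)^{-1/2}\chi_E$ is a unit vector in $L^2(G/S_z,\nu_z)$, $\phi:=\langle\Ind(z,1)(\cdot)\eta,\eta\rangle$ is a pure state associated with the irreducible representation $\Ind(z,1)$, and I write $p_\eta=\eta\langle\,\cdot\,,\eta\rangle$ for the rank-one projection onto $\C\eta$. Now assume $M_L(\Ind(z,1),(\Ind(x_n,1)))\ge k$ for some $k\in\P$; I will deduce $k\le M^2$. The set $S=\{n\in\P:\nu_{x_n}(E_n)\le M\nu_z(E)\}$ is infinite, and applying \cite[Lemma~5.2(ii)]{ASS} to the subnet $(\Ind(x_n,1))_{n\in S}$ (exactly as in the proof of Proposition~\ref{prop-constant}) gives a sub-subnet $(\Ind(x_{n_\lambda},1))_{\lambda}$, with every $n_\lambda\in S$, having the $k$-vector property for $\phi$: there are orthonormal $\xi_\lambda^{(1)},\dots,\xi_\lambda^{(k)}\in L^2(G/S_{x_{n_\lambda}},\nu_{x_{n_\lambda}})$ with $\langle\Ind(x_{n_\lambda},1)(a)\xi_\lambda^{(i)},\xi_\lambda^{(i)}\rangle\to\phi(a)$ for every $a\in C_0(X)\rtimes G$ and $1\le i\le k$.

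Given $\varepsilon>0$, I would then choose a test element $c\in C_c(G\times X)$ of the form $c(s,y)=\gamma(s,y)\beta(s)$, where $\gamma\in C_c(G\times X)$ satisfies $0\le\gamma\le\nu_z(E)^{-1}$, $\supp\gamma\subseteq\{(s,y):y\in V\text{ and }s^{-1}\cdot y\in V\}$, and $\gamma$ is as close as we wish in $L^2$ to $(s,y)\mapsto\nu_z(E)^{-1}\chi_V(y)\chi_V(s^{-1}\cdot y)$, and $\beta\in C_c(G)$ is a suitably normalised non-negative Bruhat cross-section (\cite[Proposition~H.17]{tfb^2}), with $0\le\beta\le\alpha_z(S_z)^{-1}$ and $\int_{S_z}\beta(sv)\,d\alpha_z(v)=1$ on the relevant compact set of first coordinates (here $S_z$ compact is used). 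A computation with the formulas of \cite[p.~1216]{aH} shows that $\Ind(x,1)(c)$ is the integral operator on $L^2(G/S_x,\nu_x)$ with kernel $K_x(\dot r,\dot u)=\tilde{c}_x\big(\dot r(\dot u)^{-1},\,r\cdot x\big)$, where $\tilde{c}_x(\dot s,y)=\int_{S_x}c(sv,y)\,d\alpha_x(v)$. Two points then matter. First, because $G$ is abelian, $S_x$ fixes $G\cdot x$ pointwise, and one checks that $(sv)^{-1}\cdot(r\cdot x)=u\cdot x$ whenever $s\in ru^{-1}S_x$ and $v\in S_x$; hence the support condition on $\gamma$ forces $K_x(\dot r,\dot u)=0$ unless both $r\cdot x\in V$ and $u\cdot x\in V$, so that
\[
\|\Ind(x_n,1)(c)\|_{HS}^2=\int_{E_n}\!\int_{E_n}|K_{x_n}(\dot r,\dot u)|^2\,d\nu_{x_n}(\dot u)\,d\nu_{x_n}(\dot r)\ \le\ \|\tilde{c}_{x_n}\|_\infty^2\,\nu_{x_n}(E_n)^2 .
\]
Second, by construction $\|\tilde{c}_z\|_\infty\le\nu_z(E)^{-1}$ and $\Ind(z,1)(c)$ lies within $\varepsilon$ of $p_\eta$ in operator norm, while the continuous choice of Haar measures on the closed subgroups of $G$, together with $S_{x_n}\to S_z$ and $S_z$ compact (by arguments in the spirit of Lemmas~\ref{lem-pointwise} and~\ref{lem-supmeasures2}, and keeping in mind that $(x_n)$ need not converge to $z$, only $\overline{G\cdot x_n}\to\overline{G\cdot z}$), gives $\limsup_n\|\tilde{c}_{x_n}\|_\infty\le\nu_z(E)^{-1}$.

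To conclude: each $\Ind(x_{n_\lambda},1)(c)$ is Hilbert--Schmidt, so
\[
\|\Ind(x_{n_\lambda},1)(c)\|_{HS}^2\ \ge\ \sum_{i=1}^k\|\Ind(x_{n_\lambda},1)(c)\xi_\lambda^{(i)}\|^2=\sum_{i=1}^k\big\langle\Ind(x_{n_\lambda},1)(c^*c)\xi_\lambda^{(i)},\xi_\lambda^{(i)}\big\rangle\ \longrightarrow\ k\,\phi(c^*c),
\]
the limit coming from the $k$-vector property applied to $a=c^*c$, while $\phi(c^*c)=\|\Ind(z,1)(c)\eta\|^2\ge(1-\varepsilon)^2$. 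Combining this with the displayed kernel estimate and with $\nu_{x_{n_\lambda}}(E_{n_\lambda})\le M\nu_z(E)$ (as $n_\lambda\in S$),
\[
k(1-\varepsilon)^2\ \le\ \limsup_\lambda\|\Ind(x_{n_\lambda},1)(c)\|_{HS}^2\ \le\ \nu_z(E)^{-2}\cdot M^2\,\nu_z(E)^2\ =\ M^2 .
\]
Letting $\varepsilon\to0$ yields $k\le M^2$; as $k$ was an arbitrary integer not exceeding $M_L(\Ind(z,1),(\Ind(x_n,1)))$, this gives $M_L(\Ind(z,1),(\Ind(x_n,1)))\le\lfloor M^2\rfloor$, as required.

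The hard part, I expect, is the construction and estimation of the test element $c$ in the second step: one must simultaneously impose $\supp\gamma\subseteq\{(s,y):y,\ s^{-1}\cdot y\in V\}$, so that \emph{both} kernel variables of $\Ind(x_n,1)(c)$ are trapped inside $q_{x_n}(\phi_{x_n}^{-1}(V))$ (this is where abelian-ness of $G$ enters), keep $\Ind(z,1)(c)$ close to $p_\eta$ with $\|\tilde{c}_z\|_\infty\le\nu_z(E)^{-1}$, and control $\limsup_n\|\tilde{c}_{x_n}\|_\infty$ as the stability subgroups move and $(x_n)$ itself may fail to converge to $z$. The crude factor $\lfloor M^2\rfloor$ (instead of $\lfloor M\rfloor$) is exactly the cost of bounding $\int_{E_n}|K_{x_n}(\dot r,\dot u)|^2\,d\nu_{x_n}(\dot u)$ by $\|\tilde{c}_{x_n}\|_\infty^2\,\nu_{x_n}(E_n)$; a sharper comparison of this inner integral with the analogous quantity at $z$ (which is at most $\nu_z(E)^{-1}$) would lose only one factor of $\nu_{x_n}(E_n)$ and, using the finiteness of $M_L$ just obtained, leads to the stronger Theorem~\ref{thm-M}.
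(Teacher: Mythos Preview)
Your outline is correct and the overall strategy matches the paper's: build a test element in $C_c(G\times X)$ whose image under $\Ind(z,1)$ is (close to) a rank-one projection, then bound the Hilbert--Schmidt norm of its image along the sequence by the crude kernel estimate $|K_{x_n}|\le\|\tilde c_{x_n}\|_\infty$ times the measure of its support $\nu_{x_n}(E_n)^2$. Two technical choices in the paper are worth noting. First, before constructing anything the paper passes to a subsequence along which $x_n\to z$ (not merely $\overline{G\cdot x_n}\to\overline{G\cdot z}$): openness of the quotient $X\times\hat G\to(X\times\hat G)/\!\sim$ produces $g_k\in G$ with $g_k\cdot x_{n_k}\to z$, and one may replace $x_{n_k}$ by $g_k\cdot x_{n_k}$ since $\Ind(g\cdot x,1)\simeq\Ind(x,1)$ and the measure bound \eqref{eq-data} is $G$-invariant; this removes the complication you flag at the end. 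Second, rather than your route through the $k$-vector property and the inequality $\|T\|_{HS}^2\ge\sum_i\|T\xi_i\|^2$, the paper arranges $\Ind(z,1)(D^*D)$ to be an \emph{exact} rank-one projection (by normalising $F\in C_c(X)$ so that $\|\tilde F_z\|_{2,z}=1$ and taking $b\equiv 1/\alpha_z(S_z)$ on the relevant set) and then invokes the generalised lower semi-continuity theorem \cite[Theorem~4.3]{AS}, namely $\liminf_n\tr(\pi_n(D^*D))\ge M_L(\pi,(\pi_n))\cdot\tr(\pi(D^*D))$. The two mechanisms are equivalent in effect, but the latter packages the passage to subnets and the $\epsilon$-bookkeeping more cleanly. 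Your closing diagnosis is exactly right: the sup-bound on the kernel costs two factors of $\nu_{x_n}(E_n)$, hence $M^2$; Theorem~\ref{thm-M} sharpens the inner integral via \cite[Lemma~3.2]{AaH} (which genuinely needs $x_n\to z$ and the finiteness of $M_L$ just established) to recover a single factor of $M$.
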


\begin{proof}
We  may assume that $\Ind(x_n,\tau_n)\to \Ind(z,\tau)$, since $M_L(\Ind(z,\tau),(\Ind(x_n,\tau_n)))=0$ otherwise. So by Proposition~\ref{prop-constant} it suffices to show that $M_L(\Ind(z,1),(\Ind(x_n,1)))\leq\lfloor M^2\rfloor $.
 Since $M_L(\Ind(z,1),(\Ind(x_n,1)))\leq M_L(\Ind(z,1),(\Ind(x_{n_i},1))$ for any subsequence $(\Ind(x_{n_i},1))$, we may assume that \eqref{eq-data} holds for all $n$.

Next, we  claim that we may, by passing to a further subsequence, assume that $x_n\to z$.   Note that $\ker\Ind(x_n,1)\to \ker\Ind(z,1)$ in $\Prim(C_0(X)\rtimes G)$. Since  $\Prim(C_0(X)\rtimes G)$ is homeomorphic to $(X\times \hat G)/\!\sim$ and since the quotient map $X\times\hat G\to  (X\times \hat G)/\!\sim$ is open \cite[Theorem~5.3]{W2}, there exists a subsequence $(x_{n_i},1)$ of $(x_n,1)$ and  $(y_i,\sigma_i)\in X\times\hat G$ such that $(x_{n_i},1)\sim(y_i,\sigma_i)\to (z,1)$ in $X\times \hat G$.  Thus $\overline{G\cdot y_i}=\overline{G\cdot x_{n_i}}$ and $\sigma_i|_{S_{x_{n_i}}}=1$.
Let $(N_k)$ be a decreasing basic sequence of open neighbourhoods of $z$ in $X$.  There exists a subsequence $(y_{i_k})$ such that $y_{i_k}\in N_k$.  Since $\overline{G\cdot y_{i_k}}=\overline{G\cdot x_{n_{i_k}}}$ there exists $g_k\in G$ such that $g_k\cdot x_{n_{i_k}}\in N_k$. Hence $g_k\cdot x_{n_{i_k}}\to z$.
By \cite[Corollary~4.8]{W2}
$\Ind(x_{n_{i_k}},1)$ and $\Ind(g_k\cdot x_{n_{i_k}},1)$ are unitarily equivalent, and by the invariance of the measure we can replace $x_{n_{i_k}}$ with $g_k\cdot x_{n_{i_k}}$ in \eqref{eq-data}. So we may assume that $x_n\to z$ as claimed.

Now we will adapt the proof of \cite[Theorem~3.1]{AaH}.
Fix $\epsilon>0$ such that $M^2(1+\epsilon)^4<\lfloor M^2\rfloor +1$.
We will build a function $D\in C_c(G\times X)$
such that $\Ind(z,1)(D^**D)$ is a rank-one projection and
\[
\tr(\Ind(x_n,1)(D^**D))<M^2(1+\epsilon)^4<\lfloor M^2\rfloor +1
\] eventually.
(The function $D$ is similar to the ones used in  \cite[Proposition~4.5]{aH}, \cite[Proposition~4.2]{W2} and \cite[Theorem~3.1]{AaH}.)
By the generalised lower semi-continuity result of \cite[Theorem~4.3]{AS} we will have
\begin{align*}
\liminf_n \tr(\Ind(x_n,1)(D^**D))&\geq
M_L(\Ind(z,1),(\Ind(x_n,1)))\tr(\Ind(z,1)(D^**D))\\&=
M_L(\Ind(z,1),(\Ind(x_n,1))),
\end{align*}
and the theorem will follow.

Let $\delta>0$ such that
\begin{equation*}
\delta<\frac{\epsilon\nu_z(q_z(\phi_z^{-1}(V)))}{1+\epsilon}<\nu_z(q_z(\phi_z^{-1}(V))).
\end{equation*}
By the regularity of the measure $\nu_z$ there exists a compact subset $W$ of $G/S_z$ such that  $W\subset q_z(\phi_z^{-1}(V))$ and
\begin{equation*}
0<\nu_z(q_z(\phi_z^{-1}(V)))-\delta<\nu_z(W).
\end{equation*}
Since $W$ is compact, there is a compact neighbourhood $W_1$ of
$W$ contained in the open set $q_z(\phi_z^{-1}(V))$ and a continuous function
$g:G/S_z\to[0,1]$ such that $g$ is identically one on $W$ and is
identically zero off the interior of $W_1$. Then
\begin{equation*}
\nu_z(q_z(\phi_z^{-1}(V)))-\delta<\nu_z(W)\leq \int_{G/S_z} g(\dot u)^2\, d\nu_z(\dot u)=\|g\|_{2,z}^2,
\end{equation*}
and hence
\begin{equation}\label{eq-estimate}
\frac{\nu_z(q_z(\phi_z^{-1}(V)))}{\|g\|_{2,z}^2}<
1+\frac{\delta}{\|g\|_{2,z}^2}<1+\frac{\delta}{\nu_z(q_z(\phi_z^{-1}(V)))-\delta}<1+\epsilon.
\end{equation}
Since $G\cdot z$ is locally closed in $X$ it follows from
\cite[Theorem~1]{Gli}, applied to the locally compact Hausdorff
transformation group $(G, G\cdot z)$, that $\phi_z:G\to G\cdot z$ induces a
homeomorphism $\dot s\mapsto s\cdot z$ of $G/S_z$ onto $G\cdot z$.
 So there is a continuous
function $g_1:W_1\cdot z\to[0,1]$ such that $g_1(\dot u\cdot z)=g(\dot u)$ for
$\dot u\in W_1$. Since $W_1\cdot z$ is a compact subset of the locally
compact Hausdorff space $X$, it follows from Tietze's Extension
Theorem (applied to the one-point compactification of $X$ if
necessary) that $g_1$  can be extended to a continuous function
$g_2:X\to [0,1]$. Because $W_1\cdot z$ is a compact subset of the
open set $V$, there exists a compact neighbourhood $P$ of $W_1\cdot
z$ contained in $V$ and a continuous function $h:X\to [0,1]$ such
that $h$ is identically one on $W_1\cdot z$ and is identically zero
off the interior of $P$.
 Note that $h$ has compact support contained in $P$. We set
\[
f(x)=h(x)g_2(x).
\]
Then $f\in C_c(X)$ with $0\leq f\leq 1$ and  $\supp f\subset\supp
h\subset P\subset V$.  Set $\tilde f_z(\dot s)=f(s\cdot z)$ so that $\tilde f:G/S_z\to X$.   Note that
\begin{equation}\label{eq-reciprocal}
 \|\tilde f_z\|^2_{2,z}=\int_{G/S_z} \tilde f(\dot u\cdot z)^2\, d\nu_z(\dot u)=\int_{G/S_z} h(\dot u\cdot z)^2g_2(\dot u\cdot z)^2\, d\nu_z(\dot u)\geq\int_{W_1} g(\dot u)^2\, d\nu_z(\dot u) =\|g\|^2_{2,z}
\end{equation}
since $h$ is identically one on $W_1\cdot z$ and the support of  $g$
is contained in $W_1$. We now set
\[
F(x)=\frac{f(x)}{\|\tilde f_z\|_{2,z}}.
\]
Now $F\in C_c(X)$ and  $F_x(s)=F(s\cdot x)\neq 0$ implies
$s\in\phi_x^{-1}(V)$ by our choice of $h$. Since $\phi_z^{-1}(V)$ is
relatively compact, $\supp F_z$ is compact. Write $\tilde F_z(\dot
s)=F(s\cdot z)$ and note that $\|\tilde F_z\|_{2,z}=1$.

Recall that $S_z$ is compact by assumption. Choose $b\in C_c(G\times X)$ such that $0\leq b\leq 1/\alpha_z(S_z)$ and $b$ is identically
$1/\alpha_z(S_z)$ on the set $(\supp F_z)S_z(\supp F_z)^{-1}\times\supp F$; we may obtain that $\supp b\subset N\times X$ where $N=N^{-1}$ is a compact subset of $G$ containing $S_z$.
Set
\[
B(r,x)=F(x)F(r^{-1}\cdot x)b(r^{-1},x)\quad \text{and}\quad
D=\frac{1}{2}(B+B^*).
\]
We have
 \begin{align*}
(\Ind(x,1)(B)\xi)(\dot s)
&=\int_G B(r,s\cdot x)\xi(r^{-1}\dot s)
  \, d\mu(r)\\
&=\int_G F(s\cdot x)F(r^{-1}s\cdot x)b(r^{-1},s\cdot x)
  \xi(r^{-1}\dot s)
  \, d\mu(r)\\
&=F(s\cdot x)
  \int_G F(u\cdot x)b(us^{-1},s\cdot x) \xi(\dot u)
  \,d\mu(u)\\
&=F(s\cdot x)
  \int_{G/S_x} F(u\cdot x)\int_{S_x}
 b(uts^{-1},s\cdot x)
  \, d\alpha_x(t)\xi(\dot u)\, d\nu_x(\dot u)
\end{align*}
so that
\begin{align*}
&(\Ind(x,1)(D)\xi)(\dot s)\\
&
=\frac{1}{2}F(s\cdot x)
 \int_{G/S_x} F(u\cdot x)\left(\int_{S_x}
\big( b(us^{-1}t,s\cdot x) +
 b(su^{-1}t,u\cdot x)\big) \ d\alpha_x(t)\right)\xi(\dot u)
\, d\nu_x(\dot u)
\end{align*}
since $G$ is abelian.

If $F(s\cdot z)$ and $F(u\cdot z)$ are nonzero then $s,u\in \supp F_z$, and hence $b(uts^{-1},s\cdot z)+b(stu^{-1}, u\cdot z)=2/\alpha_z(S_z)$ for all $t\in S_z$.  It follows
that
\[(\Ind(z,1)(D)\xi)(\dot s)=F(s\cdot z)
 \int_{G/S_z} F(u\cdot z)\xi(\dot u)
\, d\nu_z(\dot u)=(\xi, \tilde F_z)\tilde F_z(\dot s).\] Thus $\Ind(z,1)(D)$, and
hence $\Ind(z,1)(D^*D)$, is the rank-one projection determined by the unit vector
$\tilde F_z\in L^2(G/S_z,\nu_z)$.

Recall that we are assuming that \eqref{eq-data} holds for all $n$ and set
$
E_n=\{s\in G: F(s\cdot x_{n})\neq 0\}
$.
Then each $q_{x_{n}}(E_n)$ is open, hence measurable, with
\begin{equation}\label{eq-measuremiracle}\nu_{x_n}(q_{x_n}(E_n))\leq\nu_{x_n}(q_{x_n}(\phi_{x_{n}}^{-1}(V)))\leq M\nu_z(q_z(\phi_z^{-1}(V)))<\infty.\end{equation}

Note that $\Ind(x_{n},1)(D)$ is a kernel operator with kernel
\[K_n(\dot s, \dot u):=\frac{1}{2}F(s\cdot x_{n}) F(u\cdot x_n)\left(\int_{S_{x_{n}}}(b(us^{-1}t,s\cdot x_{n})+b(su^{-1}t,
u\cdot x_{n})\, d\alpha_{x_{n}}(t)\right).
\]
To see that $\Ind(x_{n},1)(D)$ is a Hilbert-Schmidt operator on $L^2(G/S_{x_{n}})$, we need to see that $K_n$ is in $L^2(G/S_{x_{n}}\times G/S_{x_{n}})$. The support of $K_n$ is contained in $q_{x_{n}}(E_n)\times q_{x_{n}}(E_n)$, which has finite measure by \eqref{eq-measuremiracle}. Note that $K_n$ is continuous, hence measurable, since $F$ is continuous and $b\in C_c(G\times X)$.  To see that $K_n$ is bounded,
set
\[
\Upsilon_n(s,u):=\int_{S_{x_{n}}}(b(us^{-1}t,s\cdot x_{n})+b(su^{-1}t,
u\cdot x_{n}))\, d\alpha_{x_{n}}(t),
\]
and note that $\Upsilon_n$ is constant on $S_{x_{n}}\times S_{x_{n}}$-cosets. Recall that $0\leq b\leq1/\alpha_z(S_z)$ and that $\supp b\subset N\times X$ where $N=N^{-1}$ and $S_z\subset N$.  If $us^{-1}t\notin N$ for all $t\in S_{x_{n}}$ then $\Upsilon(s,u)=0$. If $us^{-1}t_0\in N$ for some $t_0\in S_{x_{n}}$ then we may assume that $us^{-1}\in N$ (because $\Upsilon_n(s,u)=\Upsilon_n(t_0^{-1}s,u)=\Upsilon_n(t_0^{-1}s,ut_0^{-1})$). Thus
\[
\Upsilon_n(s,u)\leq \frac{2}{\alpha_z(S_z)}\alpha_{x_{n}}(\{t\in S_{x_{n}}:us^{-1}t\in N\})\leq  \frac{2}{\alpha_z(S_z)}\alpha_{x_{n}}(S_{x_{n}}\cap N^2).
\]
Let $\eta\in C_c(G)^+$ such that $\eta$ is identically one on $N^2$.  It follows from our choice of continuous Haar measures on the closed subgroups of $G$ that $H\mapsto \int_H \eta(t)\, d\alpha_H(t)$ is a continuous function on $\Sigma$. Since   $S_{x_n}\to S_z$ by assumption, there exists $n_0$ such that, for $n\geq n_0$,
\begin{equation}\label{eq-upsilon}
\Upsilon_n(s,u)
\leq   \frac{2}{\alpha_z(S_z)}\int_{S_{x_{n}}}\eta(t)\,d\alpha_{x_{n}}(t)\leq   \frac{2}{\alpha_z(S_z)}\int_{S_z}\eta(t)\,d\alpha_z(t)(1+\epsilon)=2(1+\epsilon).
\end{equation}
Hence  $0\leq K_n(\dot s,\dot u)\leq \|F\|^2_\infty(1+\epsilon)$ when $n\geq n_0$.

Let $n\geq n_0$.  Then  $\Ind(x_{n},1)(D)$ is the self-adjoint Hilbert-Schmidt operator  with kernel $K_n$.
It follows that $\Ind(x_{n},1)(D^**D)$ is a trace-class operator with
\[\tr(\Ind(x_{n},1)(D^**D))=\|K_n\|_{2,x_{n}}^2\]
(see, for example, \cite[Proposition~3.4.16]{ped}). To estimate the
trace we note, using \eqref{eq-measuremiracle} and
\eqref{eq-reciprocal}, that
\begin{equation}\label{eq-moved}
\int_{G/S_{x_n}} F(s\cdot x_{n})^2\, d\nu_{x_{n}}(\dot s)\leq\frac{\nu_{x_n}(q_{x_n}(E_n))}{\|f_z\|^2_{2,z}}
\leq\frac{M\nu_z(q_z(\phi_z^{-1}(V)))}{\|g\|^2_{2,z}}.
\end{equation}
An application of Fubini's Theorem gives
\begin{align}
\tr(&\Ind( x_{n},1)(D^**D))\notag\\
&= \frac{1}{4}\int_{G/S_{x_{n}}}\int_{G/S_{x_{n}}} F(s\cdot x_{n})^2 F(u\cdot x_{n})^2\Upsilon_n(s,u)^2\, d\nu_{x_{n}}(\dot s)\, d\nu_{x_{n}}(\dot u)\label{eq-forMthm}\\
&\leq \int_{G/S_{x_{n}}}\int_{G/S_{x_{n}}} F(s\cdot x_{n})^2 F(u\cdot x_{n})^2 (1+\epsilon)^2\, d\nu_{x_{n}}(\dot s)\, d\nu_{x_{n}}(\dot u)\quad\quad\quad\text{(using \eqref{eq-upsilon})}\notag
\\
&=(1+\epsilon)^2 \Big(\int_G F(s\cdot x_{n})^2\, d\nu_{x_{n}}(\dot s)\Big)^2\notag\\
&\leq\frac{(1+\epsilon)^2M^2\nu_z(q_z(\phi_z^{-1}(V)))^2}{\|g\|_{2,z}^4}\quad\quad\quad\text{(using \eqref{eq-moved})}\notag\\
&<M^2(1+\epsilon)^4 \quad\quad\quad\quad\text{(using \eqref{eq-estimate}).}\notag
\end{align}
Finally,
\begin{align*}
M_L(\Ind(z,1),(\Ind({x_n},1)))&\leq \liminf_n \tr(\Ind(x_{n},1)(D^**D))
&\leq
M^2(1+\epsilon)^4<\lfloor M^2\rfloor+1,
\end{align*}
and hence $M_L(\Ind(z,1),(\Ind(x_n,1)))\leq\lfloor M^2\rfloor$.
\end{proof}

\begin{thm}\label{thm-M}
Suppose that $(G,X)$ is a second-countable transformation group
with $G$ abelian. Let $z\in X$  and let
$(x_n)_{n\geq 1}$ be a sequence in $X$ such that $S_{x_n}\to S_z$. Assume that $G\cdot z$ is
locally closed in $X$ and that $S_z$ is compact.   Let $M\in\R$ with $M\geq 1$, and suppose that there exists an open neighbourhood $V$ of $z$ in $X$ such that $\phi_z^{-1}(V)$ is relatively compact and
\begin{equation}\label{eq-data2}
\nu_{x_n}(q_{x_n}(\phi_{x_n}^{-1}(V)))\leq M\nu_z(q_z((\phi_z^{-1}(V))))
\end{equation}
frequently. Then $M_L(\Ind(z,\tau),(\Ind(x_n,\tau_n)))\leq\lfloor M\rfloor $ for any $\tau,\tau_n\in\hat G$.
\end{thm}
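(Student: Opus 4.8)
The plan is to rerun the construction in the proof of Proposition~\ref{thm-Msquared}, but to replace its quadratic trace estimate by a \emph{linear} one; the point is that such a bound becomes available once we know, again from Proposition~\ref{thm-Msquared}, that the multiplicity is finite. Since $M_L(\Ind(z,\tau),(\Ind(x_n,\tau_n)))=0$ unless $\Ind(x_n,\tau_n)\to\Ind(z,\tau)$, we may assume this convergence, and then by Proposition~\ref{prop-constant} it suffices to bound $m:=M_L(\Ind(z,1),(\Ind(x_n,1)))$; by Proposition~\ref{thm-Msquared}, $1\le m\le\lfloor M^2\rfloor<\infty$. As in that proof, after passing to a subsequence (which only increases $m$, since $M_L(\Ind(z,1),(\Ind(x_n,1)))\le M_L(\Ind(z,1),(\Ind(x_{n_i},1)))$) and using \cite[Corollary~4.8]{W2} and the invariance of the measures, we may assume that \eqref{eq-data2} holds for every $n$ and that $x_n\to z$ in $X$. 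Note that $q_z(\phi_z^{-1}(V))$ is a non-empty open subset of $G/S_z$, so $\nu_z(q_z(\phi_z^{-1}(V)))>0$ and the construction below makes sense.

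Fix $\epsilon>0$ and build $F\in C_c(X)$ and $D=D^*\in C_c(G\times X)$ exactly as in the proof of Proposition~\ref{thm-Msquared}: then $\Ind(z,1)(D)$ is the rank-one projection onto a unit vector $\tilde F_z\in L^2(G/S_z,\nu_z)$, and $\Ind(x_n,1)(D)$ is a self-adjoint Hilbert--Schmidt operator with continuous non-negative kernel $K_n(\dot s,\dot u)=\tfrac12 F(s\cdot x_n)F(u\cdot x_n)\Upsilon_n(s,u)$. Rather than the Hilbert--Schmidt norm $\|K_n\|^2_{2,x_n}=\tr(\Ind(x_n,1)(D^**D))$, which by \eqref{eq-forMthm} is controlled only by $M^2$, I would estimate the trace of $\Ind(x_n,1)(D)$ itself via the diagonal of its kernel:
\[
\tr(\Ind(x_n,1)(D))=\int_{G/S_{x_n}}K_n(\dot s,\dot s)\,d\nu_{x_n}(\dot s)=\tfrac12\int_{G/S_{x_n}}F(s\cdot x_n)^2\,\Upsilon_n(s,s)\,d\nu_{x_n}(\dot s).
\]
Here $\Upsilon_n(s,s)=2\int_{S_{x_n}}b(t,s\cdot x_n)\,d\alpha_{x_n}(t)$, and since $b$ is identically $1/\alpha_z(S_z)$ on $(\supp F_z)S_z(\supp F_z)^{-1}\times\supp F$, the continuity of the Haar measures $\alpha_H$ together with $S_{x_n}\to S_z$ gives $\Upsilon_n(s,s)\le 2(1+\epsilon)$ for all large $n$, uniformly over the $s$ with $F(s\cdot x_n)\ne 0$ (this is the estimate underlying \eqref{eq-upsilon}). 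Combining this with \eqref{eq-moved} and \eqref{eq-estimate},
\[
\tr(\Ind(x_n,1)(D))\le(1+\epsilon)\int_{G/S_{x_n}}F(s\cdot x_n)^2\,d\nu_{x_n}(\dot s)\le(1+\epsilon)\,\frac{M\,\nu_z(q_z(\phi_z^{-1}(V)))}{\|g\|^2_{2,z}}<M(1+\epsilon)^2
\]
for all large $n$ --- a bound linear in $M$.

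The hard part is then that the generalised lower semicontinuity of \cite[Theorem~4.3]{AS} needs a \emph{positive} element, whereas $\Ind(x_n,1)(D)$ is only self-adjoint (though with non-negative kernel). The plan for this step is to produce a fixed constant $c$ and an element $a\ge0$ in $C_0(X)\rtimes G$ with $\Ind(z,1)(a)$ still the rank-one projection onto $\tilde F_z$ (so $\tr(\Ind(z,1)(a))=1$) and $\tr(\Ind(x_n,1)(a))<M(1+\epsilon)^c$ for all large $n$ --- either by showing that $\Ind(x_n,1)(D)\ge0$ for all large $n$, using that on the relevant supports $b(\cdot\,,x_n)\to b(\cdot\,,z)$, which is \emph{constant} there, so that the kernels $K_n$ become asymptotically of positive type and (this is where the finiteness of $m$ is used) only finitely many eigenvalues need to be controlled; or by replacing $D$ by $\theta(D)$ for a continuous $\theta\colon\R\to[0,\infty)$ equal to the identity near $\sigma(\Ind(z,1)(D))=\{0,1\}$ and vanishing on $(-\infty,0]$, and verifying that the linear trace bound survives. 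Granting such an $a$, \cite[Theorem~4.3]{AS} gives $m=m\cdot\tr(\Ind(z,1)(a))\le\liminf_n\tr(\Ind(x_n,1)(a))\le M(1+\epsilon)^c$, and letting $\epsilon\downarrow 0$ forces $m\le M$, so $m\le\lfloor M\rfloor$, which by the initial reductions proves the theorem.
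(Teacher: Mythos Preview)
Your reductions and the construction of $D$ are fine, and the diagonal bound $\int K_n(\dot s,\dot s)\,d\nu_{x_n}(\dot s)<M(1+\epsilon)^2$ is correct as a computation. The gap is precisely where you flag it, and neither of your two proposed fixes closes it. For the functional-calculus route, if $\theta\ge0$ with $\theta(t)=0$ for $t\le0$ and $\theta(t)\le t$ for $t\ge0$, then $\tr\big(\Ind(x_n,1)(\theta(D))\big)\le\sum_{\lambda_i>0}\lambda_i=\tr\big((\Ind(x_n,1)(D))_+\big)$, and this can exceed the diagonal integral $\sum_i\lambda_i$ by the total mass $\sum_{\lambda_i<0}|\lambda_i|$ of the negative spectrum, which you have not bounded (the Hilbert--Schmidt estimate from \eqref{eq-forMthm} controls $\sum\lambda_i^2$, not $\sum|\lambda_i|$). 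For the ``$\Ind(x_n,1)(D)\ge0$ eventually'' route, the assertion that only finitely many eigenvalues need to be handled does not follow from $m<\infty$: the lower multiplicity counts copies of $\Ind(z,1)$ among limits of the sequence, not the rank or the number of negative eigenvalues of $\Ind(x_n,1)(D)$, and a pointwise non-negative symmetric kernel need not define a positive operator. There is also a prior issue: you write $\tr(\Ind(x_n,1)(D))$ as the diagonal integral, but you have only shown the operator is Hilbert--Schmidt, not trace class, so the trace and that formula are not yet available.

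The paper sidesteps all of this by keeping the manifestly positive element $D^**D$ and instead sharpening the double integral \eqref{eq-forMthm} to a bound \emph{linear} in $M$. The mechanism is different from yours: since $b$ is chosen with $\supp b\subset K\times L$ for a fixed symmetric compact $K\subset G$, the inner integrand $F(u\cdot x_n)^2\Upsilon_n(s,u)^2$ vanishes unless $u\in\phi_{x_n}^{-1}(V_1)\cap KsS_{x_n}$ (here $V_1$ is a carefully chosen smaller neighbourhood with $\overline{V_1}\subset V$). The finiteness of $m$ from Proposition~\ref{thm-Msquared} is used not spectrally but topologically, to localise to an open $G$-invariant $Y$ in which $G\cdot z$ is the unique limit of $(G\cdot x_n)$; this is exactly the hypothesis needed for \cite[Lemma~3.2]{AaH}, which together with Lemma~\ref{lem-supmeasures2} bounds $\nu_{x_n}\big(q_{x_n}(\phi_{x_n}^{-1}(\overline{V_1})\cap \overline{K}sS_{x_n})\big)$ uniformly in $s$ and large $n$ by roughly $\nu_z(q_z(\phi_z^{-1}(\overline{V_1})))$. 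Thus the inner integral contributes a factor independent of $M$, and only the outer integral over $s$ picks up the factor $M$ via \eqref{eq-data2}. After the $\epsilon$-bookkeeping this yields $\tr(\Ind(x_n,1)(D^**D))<M(1+\epsilon)^4$, and \cite[Theorem~4.3]{AS} applies directly to $D^**D$.
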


\begin{proof}
As in the proof of Theorem~\ref{thm-Msquared}, it suffices to prove $M_L(\Ind(z,1),(\Ind(x_n,1)))\leq\lfloor M\rfloor$ where $x_n\to z$ and \eqref{eq-data2} holds for all $n$. Next, we claim that we may as well assume that $G\cdot z$ is the unique limit of $(G\cdot x_n)$ in $X/G$.

Since $M_L(\Ind(z,1),(\Ind(x_n,1)))\leq \lfloor M^2\rfloor<\infty$
by Theorem~\ref{thm-Msquared}, $\{\Ind(z,1)\}$ is open in
the set of limits of $(\Ind({x_n},1))_n$ \cite[Proposition~3.4]{AaH}. So there is an open
neighbourhood $U$ of $\Ind(z,1)$ in the spectrum of $C_0(X)\rtimes G$
such that $\Ind(z,1)$ is the unique limit of $(\Ind({x_n},1))_n$ in $U$.  The map $\Ind:X\to (C_0(X)\rtimes G)^\wedge$, $x\mapsto\Ind(x,1)$ is continuous  and factors through the $T_0$-isation of $X/G$ \cite[Lemma~4.10]{W2}. Hence $Y:=\Ind^{-1}(U)$ is an open $G$-invariant neighbourhood of $z$ in $X$.  Note that $x_n\in Y$ eventually, and that $x\in Y$ implies  $\phi_{x}^{-1}(V)=\phi_{x}^{-1}(V\cap Y)$.  Now we argue as in \cite[Proof of Theorem~3.5]{AaH} that if   $G\cdot x_n\to G\cdot y$ for some $y\in Y$, then $G\cdot y=G\cdot z$ since $G\cdot z$ is locally closed. Since $C_0(Y)\rtimes G$ is an ideal in $C_0(X)\rtimes G$, we may compute multiplicities in the ideal instead \cite[Proposition~5.3]{ASS}, so we may replace $X$ by $Y$  and  assume that $G\cdot z$ is the unique limit of $G\cdot x_n$ in $X/G$, as claimed.

Fix
$\epsilon>0$ such that $M(1+\epsilon)^4<\lfloor M\rfloor +1$ and
choose $\gamma>0$ such that
\begin{equation}\label{eq-Mgamma}
\gamma<\frac{\epsilon\nu_z(q_z(\phi_z^{-1}(V)))}{1+\epsilon}<\nu_z(q_z((\phi_z^{-1}(V))).
\end{equation}
It follows from the regularity of the measure $\nu_z$, as in \cite[Proof of Lemma~3.3]{AaH}, that there exists an open  relatively compact
neighbourhood $V_1$ of $z$ such that $\overline{V_1}\subset V$ and
\begin{align*}
0<\nu_z(q_z(\phi_z^{-1}(V)))-\gamma
&<\nu_z(q_z(\phi_z^{-1}(V_1)))
\leq\nu_z(q_z((\phi_z^{-1}(\overline{V_1})))\\
&\leq\nu_z(q_z(\phi_z^{-1}(V)))
<\nu_z(q_z(\phi_z^{-1}(V_1)))+\gamma.
\end{align*}
(The reason for passing from $V$ to $V_1$ is that we will  later
apply \cite[Lemma~3.2]{AaH} to the compact neighbourhood
$\overline{V_1}$ and, in contrast to what could happen with
$\overline{V}$, we can control $\nu_z(q_z(\phi_z^{-1}(\overline{V_1})))$
relative to $\nu_z(q_z(\phi_z^{-1}(V_1)))$.)

Recall that we are assuming that \eqref{eq-data2} holds for all $n$.  Thus
 \begin{align}
\nu_{x_n}(q_{x_n}(\phi_{x_n}^{-1}(V_1)))
&\leq\nu_{x_n}(q_{x_n}(\phi_{x_{n}}^{-1}(V)))\notag\\
&\leq M\nu_z(q_z(\phi_z^{-1}(V)))\notag\quad\quad\text{(by  \eqref{eq-data2})}\\
&<M\big( \nu_z(q_z(\phi_z^{-1}(V_1))) +\gamma \big)\notag\\
&<M\nu_z(q_z(\phi_z^{-1}(V_1))) +M\epsilon\big( \nu_z(q_z(\phi_z^{-1}(V)))-\gamma \big)\notag\quad\quad\text{(by \eqref{eq-Mgamma})}\\
&<M\nu_z(q_z(\phi_z^{-1}(V_1))) +M\epsilon\nu_z(q_z(\phi_z^{-1}(V_1)))\notag\\
&=M(1+\epsilon)\nu_z(q_z((\phi_z^{-1}(V_1)))\label{eq-smallV}
\end{align}
for all $n$.
Since
\[
\frac{\nu_z(q_z(\phi_z^{-1}(V_1)))\big(\nu_z(q_z((\phi_z^{-1}(V_1)))+\gamma+\frac{1}{j}  \big)}
{\big( \nu_z(q_z(\phi_z^{-1}(V_1)))-\frac{1}{j} \big)^2}
\to 1+\frac{\gamma}{\nu_z(q_z(\phi_z^{-1}(V_1)))}<1+\epsilon\]
as $j\to\infty$, there exists
$\delta>0$ such that $\delta<\nu_z(q_z(\phi_z^{-1}(V_1)))$ and
\begin{equation}\label{eq-delta}
\frac{\nu_z(q_z(\phi_z^{-1}(V_1)))\big(\nu_z(q_z(\phi_z^{-1}(\overline{V_1})))+\delta)
\big)}{\big( \nu_z(q_z(\phi_z^{-1}(V_1)))-\delta \big)^2}
<\frac{\nu_z(q_z(\phi_z^{-1}(V_1)))\big(\nu(\phi_z^{-1}(V_1))+\gamma+\delta)
\big)}{\big( \nu_z(q_z(\phi_z^{-1}(V_1)))-\delta \big)^2}
<1+\epsilon.
\end{equation}

Next we construct a function $F\in C_c(X)$ with support contained in
$V_1$. By the regularity of the measure $\nu_z$ there exists a
compact subset $W$ of the open set $q_z(\phi_z^{-1}(V_1))$ such that
$0<\nu_z(q_z(\phi_z^{-1}(V_1)))-\delta<\nu_z(W)$. Since $W$ is compact, there
is a compact neighbourhood $W_1$ of $W$ contained in
$q_z(\phi_z^{-1}(V_1))$ and a continuous function $g:G/S_z\to[0,1]$ such that
$g$ is identically one on $W$ and is identically zero off the
interior of $W_1$. Then
\begin{equation}\label{eq-g}
\nu_z(q_z(\phi_z^{-1}(V_1)))-\delta<\nu_z(W)\leq \int_{G/S_z} g(\dot t)^2\, d\nu_z(\dot t)=\|g\|_{2,z}^2.
\end{equation}

We now construct $g_1$, $g_2$, $P$, $h$, $f$ and $F$ as in the proof of Proposition~\ref{thm-Msquared}, but we note that this time $\supp f\subset \supp h\subset P
\subset V_1$.  So if $F_x(s)=F(s\cdot x)\neq 0$ then $s\in\phi_x^{-1}(V_1)$. As before
\begin{equation}\label{eq-reciprocal2}
\|\tilde f_z\|^2_{2,z} \geq\|g\|^2_{2,z}
\end{equation}
and
$\|\tilde F_z\|_{2,z}=1$.

Let $K$ be an open relatively compact symmetric neighbourhood of
$(\supp F_z)S_z(\supp F_z)^{-1}$ in $G$ and $L$ an open relatively
compact neighbourhood of $\supp F$ in $X$. Choose $b\in C_c(G\times
X)$ such that $0\leq b\leq 1/\alpha_z(S_z)$, $b$ is identically $1/\alpha_z(S_z)$ on the set
$(\supp F_z)S_z(\supp F_z)^{-1}\times\supp F$ and $b$ is identically
zero off $K\times L$. (Thus $b$ is as in Theorem~\ref{thm-Msquared},
but we have rounded it off with an open set.) Set
\[
B(r,x)=F(x)F(r^{-1}\cdot x)b(r^{-1},x)\quad
\text{and}\quad D=\frac{1}{2}(B+B^*).
\]

Again,  $\Ind(z,1)(D)$, and hence $\Ind(z,1)(D^**D)$, is
the rank-one projection determined by the unit vector $\tilde F_z\in
L^2(G/S_z,\nu)$. From \eqref{eq-forMthm} there exists $n_0$ such that
\[
\tr(\Ind(x_{n},1)(D^**D))
=\frac{1}{4}\int_{G/S_{x_n}} F(s\cdot x_{n})^2\Big(  \int_{G/S_{x_n}} F(u\cdot
x_{n})^2\Upsilon_n(s,u)^2 d\nu_z(\dot u)\Big)\, d\nu_z\nu(\dot s)
\]
where
\[
\Upsilon_n(s,u):=\int_{S_{x_{n}}}(b(us^{-1}t,s\cdot x_{n})+b(su^{-1}t,
u\cdot x_{n}))\, d\alpha_{x_{n}}(t)\leq 2(1+\epsilon)
\]
when $n\geq n_0$.
For  fixed $s\in G$,  $F(u\cdot x_n)\Upsilon_n(s,u)\neq 0$ implies that $u\in\phi_{x_n}^{-1}(V_1)$ and $us^{-1}t=uts^{-1}\in K$ for some $t\in S_{x_n}$ because $b$ is identically zero  off $K\times L$. So $u\in \phi_{x_n}^{-1}(V_1)\cap KsS_{x_n}$. Thus if $n\geq n_0$,
\begin{align*}
\tr(\Ind&(x_{n},1)(D^**D))\\
&\leq (1+\epsilon)^2\int_{s\in q_{x_n}(\phi_{x_{n}}^{-1}(V_1))}F(s\cdot x_{n})^2\Bigg( \int_{u\in q_{x_n}(\phi_{x_{n}}^{-1}(V_1)\cap KsS_{x_n})} F(u\cdot x_{n})^2\, d\nu_{x_n}(\dot u)\Bigg)\, d\nu_{x_n}(\dot s)\\
&\leq \frac{(1+\epsilon)^2}{\|\tilde f_z\|_{2,z}^4}\int_{s\in q_{x_n}(\phi_{x_{n}}^{-1}(V_1))}1\Bigg( \int_{u\in q_{x_n}(\phi_{x_{n}}^{-1}(V_1)\cap KsS_{x_n})}1\, d\nu_{x_n}(\dot u)\Bigg)\, d\nu_{x_n}(\dot s).
\end{align*}

By the regularity of $\nu_z$ there exists an open neighbourhood $U$ of
$q_z(\phi_z^{-1}(\overline{V_1}))$ such that
$\nu_z(U)<\nu_z(q_z(\phi_z^{-1}(\overline{V_1})))+\delta/2$.  Since $q_z(\phi_z^{-1}(\overline{V_1}))$ is compact, it has  an open relatively compact
neighbourhood $A$ such that $\overline{A}\subset U$.
By
\cite[Lemma~3.2]{AaH}, applied with $\overline{V_1}$, $\overline{K}$ and $q_z^{-1}(A)$,
there exists $n_1>n_0$ such that, for every $n \geq
n_1$ and every $s\in \phi_{x_{n}}^{-1}(\overline{V_1})$ there exists
$r\in\phi_z^{-1}(\overline{V_1})$ with
$\overline{K}s\cap\phi_{x_{n}}^{-1}(\overline{V_1})\subset q_z^{-1}(A)r^{-1}s$. Hence $\overline{K}sS_{x_n}\cap\phi_{x_{n}}^{-1}(\overline{V_1})\subset q_z^{-1}(A)r^{-1}sS_{x_n}$.
Since  $\nu_{x_n}$ is right-invariant we have
\[
\nu_{x_n}\big(q_{x_n}( \overline{K}sS_{x_n}\cap\phi_{x_{n_i}}^{-1}( \overline{V_1}))
\big)
\leq\nu_{x_n}(q_{x_n}(q_z^{-1}(A)S_{x_n}))=\nu_{x_n}(q_{x_n}(q_z^{-1}(A)))
\leq\nu_{x_n}(q_{x_n}(q_z^{-1}(\overline{A}))).
\]
Since $\overline{A}$ and $S_z$ are both compact, $q_z^{-1}(\overline{A})$ is compact, and
by Lemma~\ref{lem-supmeasures2} there exists $n_2>n_1$ such that $n\geq n_2$ implies  that
\[
\nu_{x_n}(q_{x_n}(q_z^{-1}(\overline{A})))
\leq \nu_z(q_z(q_z^{-1}(\overline{A})))+\delta/2 <\nu_z(q_z(\phi_z^{-1}(\overline{V_1})))+\delta.
\]
So, provided $n\geq n_2$,
\begin{align*}
\tr(\Ind(x_{n},1)(D^**D))
&\leq \frac{(1+\epsilon)^2\nu_{x_n}(q_{x_n}(\phi_{x_{n}}^{-1}(V_1)))\big(\nu_z(q_z(\phi_z^{-1}(\overline{V_1})))+\delta \big)}{\|\tilde f_z\|_{2,z}^4}\\
&<\frac{M(1+\epsilon)^3\nu_z(q_z(\phi_z^{-1}(V_1)))\big(\nu_z(q_z(\phi_z^{-1}(\overline{V_1})))+\delta
\big)}{\|\tilde f_z\|_{2,z}^4}
\text{\  using \eqref{eq-smallV}}\\
&<\frac{M(1+\epsilon)^3\nu_z(q_z(\phi_z^{-1}(V_1)))\big(\nu_z(q_z(\phi_z^{-1}(\overline{V_1})))+\delta
\big)}{\|g\|_{2,z}^4}
\text{\  using \eqref{eq-reciprocal2}}\\
&\leq\frac{M(1+\epsilon)^3\nu_z(q_z(\phi_z^{-1}(V_1)))\big(\nu_z(q_z(\phi_z^{-1}(\overline{V_1})))+\delta
\big)}{(\nu_z(q_z(\phi_z^{-1}(V_1)))-\delta)^2}
\text{\ using \eqref{eq-g}}\\
&<M(1+\epsilon)^4\text{\qquad\qquad\qquad\qquad\qquad\qquad\qquad using \eqref{eq-delta}}.
\end{align*}
By generalised lower semi-continuity \cite[Theorem~4.3]{AS}
\begin{align*}
\liminf_n \tr(\Ind(x_n,1)(D^**D))&\geq M_L(\Ind(z,1),(\Ind({x_n},1)))\tr(\Ind(z,1)(D^**D))
\\&=M_L(\Ind(z,1),(\Ind({x_n},1))).
\end{align*}
We now have
\begin{equation*}
M_L(\Ind(z,1),(\Ind({x_n},1)))\leq \liminf_n \tr(\Ind({x_n},1)(D^**D))\leq M(1+\epsilon)^4<\lfloor M\rfloor+1,
\end{equation*} and hence $M_L(\Ind(z,1),(\Ind({x_n},1)))\leq\lfloor M\rfloor$.
\end{proof}

\section{The main theorem}\label{main}

In this section we combine the results from \S\S\ref{sec-lowerbounds}--\ref{measure} to obtain our main theorem.

\begin{thm}\label{thm-main}
Suppose that $(G,X)$ is a  second-countable transformation
group with $G$ abelian. Let $z\in X$  and let
$(x_n)_{n\geq 1}$ be a sequence in $X$ such that $S_{x_n}\to S_z$ in $\Sigma$. Assume that $G\cdot z$ is
locally closed in $X$ and that $S_z$ is compact. Let $k\in\P$.  Then the following are equivalent:
\begin{enumerate}
\item\label{thm-main1} the sequence $(x_n)_n$ converges $k$-times in $X/G$ to $z$;
\item\label{thm-main2} $M_L(\Ind(z,1),(\Ind({x_n},1)))\geq k$;
\item\label{thm-main3} there exist $\tau_n,\tau\in\hat G$ such that $M_L(\Ind(z,\tau),(\Ind({x_n},\tau_n)))\geq k$;
\item\label{thm-main4} there exists $\tau_n,\tau\in\hat G$ such that $(\Ind({x_n},\tau_n))\to \Ind(z,\tau)$, and whenever $\sigma_n,\sigma\in\hat G$ such that $(\Ind({x_n},\sigma_n))\to \Ind(z,\sigma)$, $M_L(\Ind(z,\sigma),(\Ind({x_n},\sigma_n)))\geq k$;
\item\label{thm-main5} for every open  neighbourhood $V$ of $z$  such that
$\phi_z^{-1}(V)$  is relatively compact we have
\[
\liminf_n\nu_{x_n}(q_{x_n}(\phi_{x_n}^{-1}(V)))\geq k\nu_z(q_z(\phi_{x}^{-1}(V)));
\]
\item\label{thm-main6} there exists a real number $R>k-1$ such that for every open
neighbourhood $V$ of $z$  with $\phi_{z}^{-1}(V)$ relatively
compact we have
\[
\liminf_n\nu_{x_n}(q_{x_n}(\phi_{x_n}^{-1}(V)))\geq R\nu_z(q_z(\phi_{x}^{-1}(V)));\]
\item\label{thm-main7} there exists a decreasing sequence of basic compact
neighbourhoods $(W_m)_{m\geq 1}$ of $z$ such that, for each
$m\geq1$,
\[
\liminf_n\nu_{x_n}(q_{x_n}(\phi_{x_n}^{-1}(W_m)))> (k-1)\nu_z(q_z(\phi_{x}^{-1}(W_m))).\]
\end{enumerate}
\end{thm}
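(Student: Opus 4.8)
The plan is to prove the seven conditions equivalent by establishing the cycle $(1)\Rightarrow(2)\Rightarrow(5)\Rightarrow(6)\Rightarrow(7)\Rightarrow(1)$ together with the side‑equivalences $(2)\Leftrightarrow(3)\Leftrightarrow(4)$, drawing on Corollary~\ref{cor-lowermult}, Theorem~\ref{thm-M}, Proposition~\ref{prop-tsoc2} and Proposition~\ref{prop-constant}.

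First I would dispose of the cheap parts. The equivalences $(2)\Leftrightarrow(3)\Leftrightarrow(4)$ come straight from Proposition~\ref{prop-constant}: $(2)\Rightarrow(3)$ by taking $\tau_n=\tau=1$; $(3)\Rightarrow(2)$ since $M_L(\Ind(z,\tau),(\Ind(x_n,\tau_n)))=M_L(\Ind(z,1),(\Ind(x_n,1)))$; $(2)\Rightarrow(4)$ because $(2)$ forces $\Ind(x_n,1)\to\Ind(z,1)$ and any other convergent relabelling $\Ind(x_n,\sigma_n)\to\Ind(z,\sigma)$ has the same relative multiplicity by Proposition~\ref{prop-constant}; and $(4)\Rightarrow(3)$ trivially. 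Likewise $(5)\Rightarrow(6)$ by taking $R=k$.

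Next, the cycle. For $(1)\Rightarrow(2)$ I would note that $k$-times convergence supplies in particular a sequence $(t_n^{(1)})_n$ with $t_n^{(1)}\cdot x_n\to z$ in $X$, so that $\Ind(x_n,1)\simeq\Ind(t_n^{(1)}\cdot x_n,1)\to\Ind(z,1)$ by continuity of $x\mapsto\Ind(x,1)$; since $S_z$ is compact, Corollary~\ref{cor-lowermult} then yields $M_L(\Ind(z,1),(\Ind(x_n,1)))\geq k$. For $(2)\Rightarrow(5)$ I would argue by contraposition: if $(5)$ fails for some open $V$ with $\phi_z^{-1}(V)$ relatively compact, then (when $k\geq2$) one gets $\nu_{x_n}(q_{x_n}(\phi_{x_n}^{-1}(V)))\leq M\nu_z(q_z(\phi_z^{-1}(V)))$ frequently for some $M$ with $1\le M$ and $\lfloor M\rfloor\le k-1$, so Theorem~\ref{thm-M} gives $M_L(\Ind(z,1),(\Ind(x_n,1)))\leq k-1$, a contradiction. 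For $(6)\Rightarrow(7)$ I would pick a decreasing neighbourhood basis $(W_m)$ of $z$ by compact sets with $\phi_z^{-1}(W_m)$ relatively compact (available since $G\cdot z$ is locally closed and $S_z$ compact, via the Glimm homeomorphism $G/S_z\cong G\cdot z$), apply $(6)$ to the open sets $\interior W_m$, and use the regularity of $\nu_z$ to arrange that $\nu_z(q_z(\phi_z^{-1}(\interior W_m)))$ is a large enough fraction of $\nu_z(q_z(\phi_z^{-1}(W_m)))$ that the slack $R>k-1$ delivers the strict inequality in $(7)$. Finally $(7)\Rightarrow(1)$ is Proposition~\ref{prop-tsoc2}.

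I expect the main obstacle to be the implication $(7)\Rightarrow(1)$, since Proposition~\ref{prop-tsoc2} carries the extra hypothesis that $G\cdot z$ be the \emph{unique} limit of $(G\cdot x_n)$ in $X/G$, which the theorem does not assume. To supply it I would first observe that $(7)$ forces $G\cdot x_n\to G\cdot z$ in $X/G$, hence $\Ind(x_n,1)\to\Ind(z,1)$, and then localise to an open $G$-invariant neighbourhood $Y=\Ind^{-1}(U)$ of $z$ on which $G\cdot z$ is the unique limit, computing multiplicities in the ideal $C_0(Y)\rtimes G$, exactly as in the proof of Theorem~\ref{thm-M}. That localisation needs $M_L(\Ind(z,1),(\Ind(x_n,1)))<\infty$, which I would obtain from Proposition~\ref{thm-Msquared} together with the fact that an upper bound on measure accumulation of the form \eqref{eq-data} is automatic once one passes to a subsequence with $x_n\to z$ — this follows by combining Lemma~\ref{lem-supmeasures2} with \cite[Lemma~3.2]{AaH}, as in the trace estimates of Theorems~\ref{thm-Msquared} and~\ref{thm-M}. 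Two residual technical points must also be handled: the boundary case $k=1$ of $(2)\Rightarrow(5)$ (where $(2)$ is merely $\Ind(x_n,1)\to\Ind(z,1)$ and one argues directly that compact subsets of $\phi_z^{-1}(V)$ are eventually contained in $\phi_{x_n}^{-1}(V)$, combined with the $\liminf$-analogue $\liminf_n\nu_{x_n}(q_{x_n}(C))\geq\nu_z(q_z(\interior C))$ of Lemma~\ref{lem-supmeasures2} coming from Lemma~\ref{lem-pointwise} and Fatou's lemma), and the regularity bookkeeping in $(6)\Rightarrow(7)$; both are routine.
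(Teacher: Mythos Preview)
Your cycle and the side-equivalences $(2)\Leftrightarrow(3)\Leftrightarrow(4)$ match the paper's argument, and your treatments of $(1)\Rightarrow(2)$, $(2)\Rightarrow(5)$, $(5)\Rightarrow(6)$ and $(6)\Rightarrow(7)$ are essentially the same as the paper's (your contrapositive formulation of $(2)\Rightarrow(5)$ is equivalent to the paper's direct use of Theorem~\ref{thm-M} with $M=k-\epsilon$, and your regularity bookkeeping for $(6)\Rightarrow(7)$ is exactly the content of the paper's Lemma~\ref{lem-C}).

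The genuine gap is in $(7)\Rightarrow(1)$. You correctly identify that Proposition~\ref{prop-tsoc2} requires $G\cdot z$ to be the unique limit of $(G\cdot x_n)$, and that the localisation from the proof of Theorem~\ref{thm-M} supplies this \emph{provided} $M_L(\Ind(z,1),(\Ind(x_n,1)))<\infty$. But your claim that this finiteness follows from Proposition~\ref{thm-Msquared} because a bound of the form~\eqref{eq-data} is ``automatic once one passes to a subsequence with $x_n\to z$'' is incorrect. Lemma~\ref{lem-supmeasures2} only bounds $\nu_{x_n}(q_{x_n}(W))$ for a \emph{fixed compact} $W\subset G$, and \cite[Lemma~3.2]{AaH} only bounds the measure of $Ks\cap\phi_{x_n}^{-1}(\overline{V_1})$ for a fixed compact $K$ and individual $s$; neither controls $\nu_{x_n}(q_{x_n}(\phi_{x_n}^{-1}(V)))$ globally. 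In the trace estimates of Theorems~\ref{thm-Msquared} and~\ref{thm-M} the inequality $\nu_{x_n}(q_{x_n}(\phi_{x_n}^{-1}(V)))\leq M\nu_z(q_z(\phi_z^{-1}(V)))$ is the \emph{hypothesis} being fed in (see~\eqref{eq-measuremiracle} and~\eqref{eq-smallV}), not something derived. Under~(7) alone no such bound is available: the whole phenomenon of measure accumulation is that $\nu_{x_n}(q_{x_n}(\phi_{x_n}^{-1}(V)))$ may tend to infinity even when $x_n\to z$ and $\phi_z^{-1}(V)$ is relatively compact, and this is entirely consistent with~(7).

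The paper handles this by a case split. If $M_L<\infty$ then the localisation to $Y$ works and Proposition~\ref{prop-tsoc2} applies, as you describe. If $M_L=\infty$, the contrapositive of Theorem~\ref{thm-M} forces $\nu_{x_n}(q_{x_n}(\phi_{x_n}^{-1}(V)))\to\infty$ for every admissible $V$, and the paper then builds the $k$ sequences directly: with $(K_m)$ exhausting $G$ and basic $(V_m)$, Lemma~\ref{lem-supmeasures2} gives $\nu_{x_n}(q_{x_n}(\phi_{x_n}^{-1}(V_m)))>(k-1)\nu_{x_n}(q_{x_n}(K_m))$ eventually, after which one peels off $t_n^{(1)},\dots,t_n^{(k)}\in\phi_{x_n}^{-1}(V_m)$ with each $t_n^{(j)}$ avoiding $\bigcup_{i<j}K_m t_n^{(i)}S_{x_n}$, exactly as in the proof of Proposition~\ref{prop-tsoc2}. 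You need to add this infinite-multiplicity case.
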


We show that \eqref{thm-main1} $\Longrightarrow$ \eqref{thm-main2} $\Longrightarrow$ \eqref{thm-main3} $\Longrightarrow$ \eqref{thm-main4} $\Longrightarrow$ \eqref{thm-main2} $\Longrightarrow$ \eqref{thm-main5} $\Longrightarrow$ \eqref{thm-main6} $\Longrightarrow$ \eqref{thm-main7} $\Longrightarrow$ \eqref{thm-main1}.
The reason for going  from \eqref{thm-main4} to \eqref{thm-main5} via \eqref{thm-main2} is that \eqref{thm-main2}--\eqref{thm-main4} are very similar and \eqref{thm-main2} is the least complicated to work with.

\begin{proof} \eqref{thm-main1} $\Longrightarrow$ \eqref{thm-main2}.  Assume the sequence $(x_n)_n$ converges $k$-times in $X/G$ to $z$.  Then $G\cdot x_n\to G\cdot z$, and hence $\Ind(x_n,1)\to \Ind(z,1)$.  Now $M_L(\Ind(z,1),(\Ind({x_n},1)))\geq k$ by Corollary~\ref{cor-lowermult}.

\smallskip

\eqref{thm-main2} $\Longrightarrow$ \eqref{thm-main3}. Take $\tau_n=\tau=1$.

\smallskip

\eqref{thm-main3} $\Longrightarrow$ \eqref{thm-main4}. Assume \eqref{thm-main3}. Since $M_L(\Ind(z,\tau),(\Ind({x_n},\tau_n)))>0$, $(\Ind({x_n},\tau_n))\to \Ind(z,\tau)$. Suppose $\sigma_n,\sigma\in\hat G$ such that  $(\Ind({x_n},\sigma_n))\to \Ind(z,\sigma)$.  By two applications of Proposition~\ref{prop-constant} we have
\begin{align*}
M_L(\Ind(z,\sigma),(\Ind({x_n},\sigma_n)))&=M_L(\Ind(z,1),(\Ind({x_n},1)))\\
&=M_L(\Ind(z,\tau),(\Ind({x_n},\tau_n)))\geq k,\end{align*}
so \eqref{thm-main4} holds.

\smallskip

\eqref{thm-main4} $\Longrightarrow$ \eqref{thm-main2}.  Assume \eqref{thm-main4}. Let $\tau,\tau_n\in \hat G$ such that  $(\Ind({x_n},\tau_n))\to \Ind(z,\tau)$ and $M_L(\Ind(z,\tau),(\Ind({x_n},\tau_n)))\geq k$.  Then
$M_L(\Ind(z,1),(\Ind({x_n},1)))\geq k$  by Proposition~\ref{prop-constant}, giving \eqref{thm-main2}.

\smallskip

\eqref{thm-main2} $\Longrightarrow$ \eqref{thm-main5}. Assume \eqref{thm-main2}, that is, $M_L(\Ind(z,1),(\Ind(x_n,1)))\geq k$.  Let $\epsilon>0$.  Then  $M_L(\Ind(z,1),(\Ind(x_n,1)))>\lfloor k-\epsilon\rfloor$.  By Theorem~\ref{thm-M}, for every open
neighbourhood $V$ of $z$ such that
 $\phi_z^{-1}(V)$ is relatively compact,
\[
\nu_{x_n}(q_{x_n}(\phi_{x_n}^{-1}(V)))> (k-\epsilon)\nu_zq_z(\phi_z^{-1}(V)))
\]
eventually. Thus \eqref{thm-main5} holds.
\smallskip

\eqref{thm-main5} $\Longrightarrow$ \eqref{thm-main6}. Take $R=k$.
\smallskip

For \eqref{thm-main6} $\Longrightarrow$ \eqref{thm-main7} we need the following lemma concerning accumulation of measure.

\begin{lemma}\label{lem-C} Suppose that $(G,X)$ is a transformation
group.  Let  $z\in X$ and $(x_n)_{n\geq 1}$ be a
sequence in $X$. Assume that $G\cdot z$ is locally closed in $X$ and that $S_z$ is compact.
Let $k\in \P$, and  assume that there exists a real number $R>k-1$ such that for every
open neighbourhood $U$ of $z$ with $\phi_z^{-1}(U)$ relatively
compact we have
\[
\liminf_n \nu_{x_n}(q_{x_n}(\phi_{x_n}^{-1}(U)))\geq R\nu_z(q_z(\phi_z^{-1}(U))).
\]
Then given an open  neighbourhood $V$ of $z$ such that $\phi_z^{-1}(V)$
is relatively compact, there exists a compact neighbourhood $N$ of
$z$ with $N\subset V$ such that
\[
\liminf_n \nu_{x_n}(q_{x_n}(\phi_{x_n}^{-1}(N)))> (k-1)\nu_z(q_z(\phi_z^{-1}(N))).
\]
\end{lemma}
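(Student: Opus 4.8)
The plan is to derive the claimed inequality for the \emph{compact} set $N$ from the hypothesis, which only supplies information about \emph{open} sets, by squeezing $N$ between an open neighbourhood $V_1$ of $z$ and its closure $\overline{V_1}$: I would take $N:=\overline{V_1}$, apply the hypothesis to $V_1$, and exploit the monotonicity $\phi_{x_n}^{-1}(V_1)\subseteq\phi_{x_n}^{-1}(N)$. For this to succeed the numbers $\nu_z(q_z(\phi_z^{-1}(V_1)))$ and $\nu_z(q_z(\phi_z^{-1}(\overline{V_1})))$ must be close enough that the factor $R$ still dominates the factor $k-1$; producing such a $V_1$ is the only real obstacle, and it is exactly the subtlety already resolved in the proof of Theorem~\ref{thm-M} by the regularity of $\nu_z$. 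Note first that $a:=\nu_z(q_z(\phi_z^{-1}(V)))$ is a positive real number: it is finite since $q_z$ is continuous and $\phi_z^{-1}(V)$ is relatively compact, and positive since $q_z(\phi_z^{-1}(V))$ is a non-empty open subset of $G/S_z$.

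Since $R>k-1\ge0$, I would fix $\gamma>0$ with $\gamma<a$ and $R\gamma\le(R-k+1)a$; such a $\gamma$ exists because $(R-k+1)a>0$. Then, exactly as in the construction of the auxiliary set $V_1$ in the proof of Theorem~\ref{thm-M} (which depends only on the regularity of $\nu_z$; cf.\ \cite[Proof of Lemma~3.3]{AaH}), I would choose an open, relatively compact neighbourhood $V_1$ of $z$ with $\overline{V_1}\subset V$ and
\[
0<a-\gamma<\nu_z(q_z(\phi_z^{-1}(V_1)))\le\nu_z(q_z(\phi_z^{-1}(\overline{V_1})))\le a.
\]
Writing $b$ and $c$ for the two middle quantities, the choice of $\gamma$ gives
\[
Rb>R(a-\gamma)=Ra-R\gamma\ge(k-1)a\ge(k-1)c,
\]
so $Rb>(k-1)c$.

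Finally I would set $N:=\overline{V_1}$, which is a compact neighbourhood of $z$ with $N\subset V$ (since $z\in V_1\subseteq\interior N$). As $V_1\subseteq N$, we have $\phi_{x_n}^{-1}(V_1)\subseteq\phi_{x_n}^{-1}(N)$ and hence $\nu_{x_n}(q_{x_n}(\phi_{x_n}^{-1}(N)))\ge\nu_{x_n}(q_{x_n}(\phi_{x_n}^{-1}(V_1)))$ for every $n$. Applying the hypothesis to the open neighbourhood $V_1$ of $z$ (whose preimage $\phi_z^{-1}(V_1)\subseteq\phi_z^{-1}(V)$ is relatively compact) then yields
\[
\liminf_n\nu_{x_n}(q_{x_n}(\phi_{x_n}^{-1}(N)))\ge\liminf_n\nu_{x_n}(q_{x_n}(\phi_{x_n}^{-1}(V_1)))\ge Rb>(k-1)c=(k-1)\nu_z(q_z(\phi_z^{-1}(N))),
\]
which is the required inequality.
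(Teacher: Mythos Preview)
Your proposal is correct and follows essentially the same route as the paper: choose $\gamma$ so that $R(a-\gamma)\ge(k-1)a$, use regularity of $\nu_z$ (via \cite[Proof of Lemma~3.3]{AaH}) to find an open $V_1$ with $\overline{V_1}\subset V$ and $\nu_z(q_z(\phi_z^{-1}(V_1)))>a-\gamma$, set $N=\overline{V_1}$, and chain the inequalities. The only cosmetic difference is that the paper takes the strict inequality $R\gamma<(R-k+1)a$, whereas you allow equality and instead extract strictness from $b>a-\gamma$; both are fine.
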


\begin{proof} Fix $0<\gamma<\big(\frac{R-k+1}{R}\big)\nu_z(q_z(\phi_z^{-1}(V))$. By the regularity of $\nu_z$, as in \cite[Proof of Lemma~3.3]{AaH},
there exists  an open
relatively compact neighbourhood $V_1$ of $z$ with
$\overline{V_1}\subset V$ and
$
\nu_z(q_z(\phi_z^{-1}(V)))-\gamma <\nu_z(q_z(\phi_z^{-1}(V_1)))
$.
Since $\phi_z^{-1}(V_1)$ is relatively compact we have
\begin{align*}
\liminf_n\nu_z(q_{x_n}(\phi_{x_n}^{-1}(\overline{V}_1)))
&\geq \liminf_n\nu_{x_n}(q_{x_n}(\phi_{x_n}^{-1}(V_1)))\\
&\geq R\nu_z(q_z(\phi_z^{-1}(V_1))) \text{\quad by hypothesis}\\
&>R\big(\nu_z(q_z(\phi_z^{-1}(V)))-\gamma \big)\\
&>(k-1)\nu_z(q_z(\phi_z^{-1}(V)))\text{\quad  by our choice of $\gamma$}\\
&\geq (k-1)\nu_z(q_z(\phi_z^{-1}(\overline{V}_1))).
\end{align*}
So we may take $N=\overline{V}_1$.
\end{proof}

We now continue with the proof of Theorem~\ref{thm-main}

\eqref{thm-main6} $\Longrightarrow$ \eqref{thm-main7}. Assume \eqref{thm-main6} .
 Let $(V_j)_{j\geq 1}$ be a decreasing
sequence of basic  open neighbourhoods of $z$ such that
$\phi_z^{-1}(V_1)$ is relatively compact (such neighbourhoods exist
by \cite[Lemma~2.1]{AaH}). By Lemma~\ref{lem-C} there exists a
compact neighbourhood $W_1$ of $z$ such that $W_1\subset V_1$ and
\[
\liminf_n\nu_{x_n}(q_{x_n}(\phi_{x_n}^{-1}(W_1)))> (k-1)\nu_z(q_z(\phi_z^{-1}(W_1))).
\]
Now assume there are compact neighbourhoods  $W_1, W_2, \dots, W_m$
of $z$ with $W_1\supset W_2\supset\cdots \supset W_m$  such that
\begin{equation}\label{eq-3gives4}
W_i\subset V_i\text{\ and\ }\liminf_n\nu_{x_n}(q_{x_n}(\phi_{x_n}^{-1}(W_i)))>
(k-1)\nu_z(q_z((\phi_z^{-1}(W_i)))
\end{equation}
for $1\leq i\leq m$. Apply Lemma~\ref{lem-C} to  $(\interior
W_m)\cap V_{m+1}$ to obtain a compact neighbourhood $W_{m+1}$ of $z$
such that $W_{m+1}\subset (\interior W_m)\cap V_{m+1}$ and
\eqref{eq-3gives4} holds for $i=m+1$.  This gives \eqref{thm-main7}.

\smallskip

\eqref{thm-main7} $\Longrightarrow$ \eqref{thm-main1}.  Assume \eqref{thm-main7}. We show first that $G\cdot x_n\to G\cdot
z$ in $X/G$.  Let $q:X\to X/G$ be the quotient map. Let $U$ be a
neighbourhood of $G\cdot z$ in $X/G$ and $V=q^{-1}(U)$. There exists
$m$ such that $W_m\subset V$. Since
$\liminf_n\nu_{x_n}(q_{x_n}(\phi_{x_n}^{-1}(W_m)))>0$ there exists $n_0$ such that
$\phi_{x_n}^{-1}(W_m)\neq\emptyset$ for $n\geq n_0$. Thus, for
$n\geq n_0$,
\[G\cdot x_n=q(x_n)\in q(W_m)\subset q(V)= U.\]

Next, suppose that
$M_L(\Ind(z,1),(\Ind(x_n,1)))<\infty$. Then, as in the proof of
Theorem~\ref{thm-M}, we may localise to an open $G$-invariant
neighbourhood $Y$ of $z$ such that $G\cdot z$ is the unique limit
in $Y/G$ of the sequence $(G\cdot x_n)_n$.  Eventually $W_m\subset
Y$, and so the sequence $(x_n)_n$ converges $k$-times in
$Y/G$  to $z$ by Proposition~\ref{prop-tsoc2}
applied to $Y$. But now $(x_n)_n$ converges $k$-times  in $X/G$ to $z$ as well.

Finally,   suppose that
$M_L(\Ind(z,1),(\Ind(x_n,1)))=\infty$.  By Theorem~\ref{thm-M}, for every open neighbourhood $V$ of $z$ such that
$\phi^{-1}_z(V)$ is relatively compact, $\nu_{x_n}(q_{x_n}(\phi^{-1}_{x_n}(V)))\to
\infty$ as $n\to\infty$.  Let $(K_m)_{m\geq 1}$ be an increasing
sequence of compact subsets of $G$ such that $G=\cup_{m\geq
1}\interior(K_m)$ and let $(V_m)_{m\geq1}$ be a decreasing sequence
of open,  basic neighbourhoods of $z$ such  that $\phi_z^{-1}(V_1)$
is relatively compact (such neighbourhoods exist by
\cite[Lemma~2.1]{AaH}).

For fixed $m$,
\[
\nu_{x_n}(q_{x_n}(\phi_{x_n}^{-1}(V_m)))>(k-1)\nu_z(q_z(K_m)))+1>(k-1)\nu_{x_n}(q_{x_n}(K_m))
\]
eventually by Lemma~\ref{lem-supmeasures2}.
So there exists a strictly increasing sequence of positive integers $n_m$ such that, for $n\geq n_m$,
\begin{gather}\label{eq-111}
\nu_{x_n}(q_{x_n}(\phi_{x_n}^{-1}(V_m)))>(k-1)\nu_{x_n}(q_{x_n}(\phi_{x_n}^{-1}(K_m)))
\end{gather}
If $n_1>1$, then for $1\leq n<n_1$, we set $t_n^{(i)}=e$ for $1\leq i\leq k$.  For each $n\geq n_1$, there is a unique $m$ such that $n_m\leq n_{m+1}$. Choose $t_n^{(1)}\in\phi_{x_n}^{-1}(V_m)$.  Using \eqref{eq-111} we have
\begin{align*}
\nu_{x_n}(q_{x_n}(\phi_{x_n}^{-1}(V_m)\setminus K_mt_n^{(1)}S_{x_n}))
&\geq \nu_{x_n}(q_{x_n}(\phi_{x_n}^{-1}(V_m))\setminus q_{x_n}(K_mt_n^{(1)}S_{x_n}))\\
&>(k-1)\nu_{x_n}(q_{x_n}(K_m))-\nu_{x_n}(q_{x_n}(K_m))\\
&=(k-2)\nu_{x_n}(q_{x_n}(K_m)).
\end{align*}
So if $k\geq 2$ we may choose $t_n^{(2)}\in\phi_{x_n}^{-1}(V_m)\setminus  K_mt_n^{(1)}S_{x_n}$.  Continuing in this way, we obtain $t_n^{(1)}$, \dots, $t_n^{(k)}\in\phi_{x_n}^{-1}(V_m)$, such that, for $1<j\leq k$, $t_n^{(j)}\in\phi_{x_n}^{-1}(V_m)\setminus(\cup_{i=1}^{j-1}K_mt_n^{(i)}S_{x_n})$. Thus, for $n_m\leq n<n_{m+1}$ we have
\[
t_n^{(i)}\cdot x_n\in V_m\text{\ for $1\leq i\leq k$ and\ } t_n^{(j)}\notin K_mt_n^{(i)}S_{x_n}\text{\ for $1\leq i<j\leq k$}.
\]
Therefore, arguing  as in Proposition~\ref{prop-tsoc2} (with $W_m$ replaced by $V_m$) we obtain that $(x_n)_n$ converges $k$-times  in $X/G$ to $z$.
 \end{proof}


\end{document}